\documentclass{amsart}


\usepackage[utf8]{inputenc}
\usepackage[T1]{fontenc}
\usepackage{lmodern}
\usepackage{amsfonts,amssymb}
\usepackage[english]{babel}
\usepackage{amsmath}

\usepackage{amsthm}

\usepackage{epsfig}
\usepackage{mathrsfs}
\usepackage{xcolor}
\usepackage{graphicx}
\usepackage{dsfont}
\usepackage{enumerate}
\usepackage{cases}
\usepackage{bbm}
\usepackage{hyperref}
\usepackage{stmaryrd}


\textwidth 6in
\oddsidemargin 0.25in
\evensidemargin 0.25in

\textheight 8.4in
\topmargin -.2in


\def\RR{\mathbb{R}}

\def\div{\mathrm {div}\,}

\def\L{\mathcal L}
\def\Rx{\mathcal{R}_x }

\def\Rxf{\mathcal{R}_{x_f} }

\def\D{\mbox{\tiny D}}
\def\SR{\mbox{\tiny SR}}
\def\M{\mbox{\tiny M}}
\def\B{\mathcal{B}}

\def\BD{\mathcal{B_{\D}}}
\def\BSR{\mathcal{B_{\SR}}}
\def\BDs{\mathcal{B_{\D}^*}}
\def\BSRs{\mathcal{B_{\SR}^*}}

\def\pv{\text{P.V.}} 

\def\fin{f_{in}} 
\def\bO{\bar{\Omega}}
\def\dO{\pa\Omega}

\def\Dels{\big(-\Delta\big)^s}

\def\Delsreg{\big(-\Delta\big)_\Omega^s}
\def\DelsSR{(-\Delta)_{\text{\tiny{SR}}}^s}

\def\d{\, {\rm{d}} }

\def\pa{\partial}
\def\na{\nabla}
\def\eps{\varepsilon}

\def\HSRs{\mathcal{H}_{\textrm{\tiny{SR}}}^s }
\def\HSR2s{\mathcal{H}_{\textrm{\tiny{SR}}}^{2s} }

\def\Lsr*{\mathscr{L}_{\textrm{\tiny{SR}}}^*}

\def\Hdiff0{\mathcal{H}_{\textrm{\tiny{diff}},0}^s }

\def\Deps{\mathcal{D}^{2s-1}_\eps}
\def\Dlim{\mathcal{D}^{2s-1}}

\def\tfe{\tau_f^\eps }

\def\keps{\kappa_{\D}^\eps}



%
%
%
%
%
%

\theoremstyle{plain}
\newtheorem{thm}{Theorem}[section]
\newtheorem*{thm*}{Theorem}
\newtheorem{lemma}[thm]{Lemma}
\newtheorem*{lemma*}{Lemma}
\newtheorem{prop}[thm]{Proposition}
\newtheorem*{prop*}{Proposition}
\newtheorem{defi}{Definition}[section]
\newtheorem*{defi*}{Definition}

\newtheorem{remark}[thm]{Remark}
\newtheorem{cor}[thm]{Corollary}

\makeatletter
\newtheorem*{rep@theorem}{\rep@title}
\newcommand{\newreptheorem}[2]{%
\newenvironment{rep#1}[1]{%
 \def\rep@title{#2 \ref{##1}}%
 \begin{rep@theorem}}%
 {\end{rep@theorem}}}
\makeatother

\newreptheorem{prop}{Proposition}{\bf}{\rm}
\newreptheorem{thm}{Theorem}{\bf}{\rm}
\newreptheorem{cor}{Corollary}{\bf}{\rm}
\newreptheorem{lemma}{Lemma}{\bf}{\rm}

\begin{document}

\title[Fractional diffusion limit of a linear Boltzmann model]{Fractional diffusion limit of a linear Boltzmann model with reflective boundaries in a half-space}
\date{}
\author{Ludovic Cesbron}
\address{D\'{e}partement de Math\'{e}matiques, CY Cergy Paris Universit\'{e}, France.}
\email{ludovic.cesbron@u-cergy.fr}

\begin{abstract}
We investigate the fractional diffusion limit of a Linear Boltzmann equation with heavy-tailed velocity equilibrium in a half-space with Maxwell boundary conditions. We derive a new confined version of the fractional Laplacian and show uniqueness of weak solution to the associated non-local diffusion equation.
\end{abstract}

\maketitle 
\tableofcontents

\tableofcontents

\section{Introduction}

This paper is concerned with the derivation of fractional diffusion equations in bounded domains from kinetic models. Since the pioneer works of Mellet-Mischler-Mouhot \cite{Mellet11} and Jara-Komorowski-Olla \cite{Jara2009} we know that fractional diffusion equations can be obtained as the long-time small-mean-free-path asymptotic regime of some kinetic models. In the present paper, we study the asymptotic behaviour of such kinetic models in bounded domains in order to derive confined versions of non-local diffusion equations, where the confinement and the interaction with the boundary is entirely deduced from the kinetic boundary condition. This line of research was initiated in \cite{Cesbron18} and extended e.g. in \cite{Aceves17,CesbronMelletPuel,CesbronMelletPuel2} where several types of kinetic boundary conditions have been considered. \\
We consider a linear Boltzmann-type equation in a bounded domain $\Omega$ subset of $\RR^d$ : 
\begin{equation} \label{eq:LinBoltz} 
	\left\{ \begin{aligned} 
		& \pa_t f +  v\cdot \na_x f  = L(f) &\text{ in } (0,+\infty) \times\Omega\times\RR^d \\
		& f (0,x,v) = f_{in} (x,v) &\text{ in } \Omega\times\RR^d \\
	\end{aligned} \right. 
\end{equation}
where the collision operator $L$ is a scattering operator
\begin{equation} \label{eq:defLB} 
	\begin{aligned}
		L(f) (v)&  = \nu_0 ( \rho F - f) 
	\end{aligned}
\end{equation}
with $\nu_0>0$ and $\rho = \int_{\RR^d} f \d v$. Throughout this paper, the thermodynamical equilibrium $F$ will be a normalised heavy-tail distribution function satisfying
\begin{equation}\label{eq:F0}
	\left\{ \begin{aligned}
		& F \in L^{\infty}, \quad \int F(v) \d v = 1, \quad F(v) = F(|v|)\geq 0 \\
		& \Big| F(v) - \frac{\gamma}{|v|^{d+2s}} \Big| \leq \frac{C}{|v|^{d+4s}} \qquad \mbox{ for all } |v|\geq 1.
	\end{aligned} \right. 
\end{equation}
This kinetic equation models the evolution of a particle distribution function $f(t,x,v)\geq 0$ depending on time $t>0$, position $x\in\Omega$ and velocity $v\in\RR^d$. The left-hand-side of the equation \eqref{eq:LinBoltz} models the free transport of particles -- notice that we do not consider any electric of magnetic field in this model -- whereas the scattering operator $L$ on the right-hand-side models the diffusive and mass preserving interaction between the particles and the background. \\
On the boundary of $\Omega$, we prescribe the behaviour of the particles in order for \eqref{eq:LinBoltz} to be well-posed. These boundary condition take the form of a balance between the out-going and in-going particles, hence we introduce the sets 
\begin{align*} 
	\Gamma_\pm := \Big\{ (x,v)\in\dO\times\RR^d :\, \pm n(x) \cdot v >0 \Big\} 
\end{align*}
where $n(x)$ is the outward unit normal vector to $\dO$ at $x\in\dO$. Writing $\gamma_+ f$ the restriction of the trace of $f$ to the out-going trace set $\Gamma_+$, and $\gamma_- f$ the restriction to the in-going set $\Gamma_-$, the boundary condition takes the form 
\begin{align} \label{def:generalBC}
	&\gamma_- f (t,x,v) = \B_\alpha [ \gamma_+ f] (t,x, v) &\text{ on }  (0,+\infty) \times \Gamma_-
\end{align}
We write $\B_\alpha$ the Maxwell boundary operator defined as a combination of specular and diffuse reflections: for some $\alpha\in [0,1]$:
\begin{equation} \label{def:MaxwellBC} 
	\B_\alpha [\gamma_+ f ](t,x,v) = \alpha \BD [\gamma_+ f] + (1-\alpha) \BSR [\gamma_+ f_\eps ] 
\end{equation}
with the specular reflections boundary operator given by 
\begin{equation}\label{def:SpecularBC}
	\BSR [\gamma_+ f ] (t,x,v) = \gamma_+ f(t,x, \Rx v) = \gamma_+ f\big( t,x, v - 2(v\cdot n(x)) n(x) \big)
\end{equation}
and the diffuse reflections boundary operator given by 
\begin{equation} \label{def:DiffuseBC}
	\BD[\gamma_+ f ](t,x,v) = c_0 F(v) \int_{w\cdot n(x)>0} \gamma_+ f (t,x,w) |w\cdot n(x) | \d w .
\end{equation}
Note that the constant $c_0$ in the diffusive boundary condition is a normalising constant that ensures that the equilibrium $F$ satisfies the boundary condition, i.e. 
\begin{align*}
	c_0 = \bigg(\int_{w\cdot n(x)<0} F(w) |w\cdot n(x) | \d w \bigg)^{-1}
\end{align*}
so we want the first moment of $F$ to be finite in order for this boundary condition to make sense. By assumptions \eqref{eq:F0} we know that exactly $2s$-moments of $F$ are finite so we shall assume that $s>1/2$ when considering the diffuse reflections condition. \\
These boundary conditions were introduced by Maxwell in \cite{Maxwell} in order to model the interaction between a particle and a boundary surface. The specular reflections boundary condition models a billiard-like reflection, it can be seen as a first approximation of the reflection process where the boundary is assumed to be a perfectly smooth surface without any minute asperities. The diffusive boundary condition is then a correction of this smoothness assumption, it can be derived by considering the boundary to be a stratum of particles whose velocities are distributed according to the equilibrium $F$. When a particle reaches the boundary it collides with particles from the stratum and re-enters the domain. Note that, unlike the specular reflection condition, in the diffuse reflections case the velocity of a particle after reflection is independent from its velocity before reflection. \\
The diffusion approximation of \eqref{eq:LinBoltz} is obtained by investigating the long time, small mean-free-path asymptotic behaviour of $f$. To this end, we introduce the Knudsen number $\eps$ and the following rescaling of \eqref{eq:LinBoltz}-\eqref{def:generalBC}: 
\begin{equation} \label{eq:LinBoltzrescaled} 
	\left\{ \begin{aligned} 
		& \eps^{2s} \pa_t f_\eps +  \eps v\cdot \na_x f_\eps  = L(f_\eps) &\text{ in } (0,+\infty) \times\Omega\times\RR^d \\
		& f_\eps (0,x,v) = f_{in} (x,v) &\text{ in } \Omega\times\RR^d \\
		&\gamma_- f_\eps (t,x,v) = \B_\alpha [ \gamma_+ f_\eps] (t,x, v) &\text{ on }  (0,+\infty) \times \Gamma_-
	\end{aligned} \right. 
\end{equation}
We see that the particular choice of power of $\eps$ for the rescaling in time depends on the equilibrium $F$. This is due to the fact that, for such a linear Boltzmann model as \eqref{eq:LinBoltzrescaled}, the limit diffusion process will be a $2s$-stable Levy process, with $s$ the parameter of the polynomial decay of $F$, as was proved e.g. in \cite{Mellet11,Jara2009,Mellet10,AbdallahMelletPuel} when $\Omega=\RR^d$. Our choice of rescaling \eqref{eq:LinBoltzrescaled} follows directly from the self-similar property of this Levy process, or equivalently from the fact that the fractional Laplacian of order $s$ is homogenous of degree $2s$. Note that, in general, one does not need to know a priori the power of $\eps$ that is needed in order to derive a fractional diffusion approximation. \\
In the case $\Omega=\RR^d$, it was proved in \cite{Mellet11,Mellet10,AbdallahMelletPuel} via different methods that in the limit as $\eps$ goes to $0$, $f_\eps$ converges in some weak sense to a function 
\begin{align*}
	\rho(t,x) F(v) \in \ker (L) := \left\{ \phi F,\, \phi \mbox{ independent of } v \right\}
\end{align*}
where $\rho$ is the weak solution to a fractional diffusion equation of the form
\begin{equation*}
	\left\{ \begin{aligned}
		& \pa_t \rho + \kappa \Dels \rho = 0 & \mbox{ in } (0,+\infty)\times\RR^d,\\
		&\rho(0,x) = \rho_{in} (x) = \int_{\RR^d} f_{in} \d v & \mbox{ in } \RR^d.
	\end{aligned} \right.
\end{equation*}
Recall that the fractional Laplacian $\Dels$ is a non-local integro-differential operator, infinitesimal generator of $2s$-stable Lévy process, which can be defined through its Fourier transform:
\begin{align*}
	\mathcal{F} \left( \Dels \rho \right) (\xi) := - |\xi|^{2s} \mathcal{F} \left( \rho \right) (\xi) 
\end{align*}
or equivalently as a singular integral
\begin{align*}
	\Dels \rho(x) = c_{d,s} P.V. \int_{\RR^d} \frac{\rho(x)-\rho(y)}{|x-y|^{d+2s}} \d y 
\end{align*}
where $c_{d,s}$ is an explicit constant, see e.g. \cite{DiNezzaPalatucciValdinoci12,Kwasnicki15} for more details. \\

Since our equation is set in a  subset $\Omega$ of $\RR^d$ we expect to derive a fractional diffusion equation confined to the domain $\Omega$. 
The question at the heart of this paper is to determine the appropriate boundary conditions for this asymptotic equation. When the thermodynamical equilibrium $F$ is a Gaussian (or Maxwellian) distribution it is well known that the diffusion limit of \eqref{eq:LinBoltzrescaled}, with $s=1$, leads to the classical heat equation with an homogeneous Neumann boundary condition.
Interestingly, this boundary behaviour is not very sensitive to the type of kinetic boundary conditions in the sense that if \eqref{eq:LinBoltzrescaled} is supplemented with any Maxwell boundary condition \eqref{def:MaxwellBC} with $\alpha \in [0,1]$ -- including the purely specular and purely diffuse reflection conditions -- then the limiting boundary condition is the same homogeneous Neumann condition. 

Moreover, if one considers non-linear Boltzmann models with more conservation laws (e.g. mass, momentum and energy) then one will derive fluid equations in macroscopic limits. In the acoustic regime, the limiting boundary condition is again not very sensitive to the kinetic reflection condition, as shown in \cite{JiangLevermoreMasmoudi10}. However, in hydrodynamical scalings, the boundary condition of the Stokes or Navier-Stokes limits derived in \cite{MasmoudiSaintRaymond03} and \cite{JiangMasmoudi16} does depend on the kinetic boundary interaction. More precisely, it was proved in \cite{MasmoudiSaintRaymond03} and \cite{JiangMasmoudi16} that if the accommodation coefficient $\alpha$ in \eqref{def:MaxwellBC} is fixed or goes to $0$ slower than the Knudsen number $\eps$ then one obtains a Dirichlet-type boundary condition on the limit system. On the other hand, if $\alpha$ depends on $\eps$ in such a way that $\frac{\alpha}{\eps} \rightarrow C <+\infty$ than one recovers a Navier boundary condition in the limit.

The question of boundary behaviour is very delicate with non-local operators such as the fractional Laplacian. 
Indeed, these operators are associated with $\alpha$-stable L\'evy processes (or jump processes). Unlike a Brownian motion, these processes are discontinuous and may exit the domain without touching the boundary.
This is the reason why the usual Dirichlet problem for the fractional Laplacian requires a prescribed data everywhere outside of $\Omega$ rather than just on the boundary $\pa \Omega$.
Neumann boundary value problems correspond to processes that are not allowed to jump outside $\Omega$ (sometimes referred to as censored stable processes).
Several construction of such processes are possible. 
For instance, one can cancel the process after any outside jump and restart it at its last position inside the set (resurrected processes).
This construction, see \cite{BogdanBurdzyChen03,GuanMa05,GuanMa06} for details, leads to the regional fractional Laplacian defined by
\begin{align} \label{def:regionalDels}
	\Dels_\Omega \rho(x) = c_{N,s} P.V. \int_{\Omega} \frac{\rho(x)-\rho(y)}{|x-y|^{N+2s}} \d y 
\end{align}
However, other construction of censored processes (e.g. the mirror reflection described below) are possible and will lead to different operators.
Note that, because of the non-local nature of the problem, the choice of boundary condition for the stochastic process may change the properties of its generator inside the domain and thus may lead to very different PDEs. 
Several such problems have been studied in the literature, see e.g. \cite{Barles14,GuanMa05,FelsingerKassmannVoigt15,DiPierroRosotonValdinoci17}.

In \cite{Cesbron18}, I studied the derivation of a non-local diffusion equation from a Vlasov-Lévy-Fokker-Planck model in a bounded domain with specular reflections on the boundary. It is well known that this kinetic model has the same asymptotic behaviour under a diffusive scaling as the linear Boltzmann equation which is studied in the present paper. In that case, the asymptotic 
equation reads
$$
\left\{ \begin{aligned}
	&\pa_t \rho + (-\Delta)_{\SR}^s \rho = 0 \quad & \mbox{ for } (t,x)\in (0,+\infty)\times\Omega \\
	&\rho(0,x) = \rho_{in}(x) &\mbox{ for } x\in \Omega
\end{aligned} \right. 
$$
with 
\begin{equation} \label{def:DelsSR}
	(-\Delta)_{\SR}^s\rho(x) = c_{N,s} P.V. \underset{\RR^d}{\int} \frac{\rho(x) - \rho\big(\eta(x,w)\big)}{|w|^{d+2s}} \d w
\end{equation}
where $\eta: \Omega\times\RR^d \to \bO$ is the flow of the free transport equation with specular reflection on the boundary. When $\Omega$ is the upper-half space $\lbrace y=(y',y_d)\in\RR^{d-1}\times\RR_+^*\rbrace$, we simply have
\begin{equation} \label{def:etaHS}
	\eta(x,w) = \begin{cases}
		x+w & \mbox{ if } x_d+w_d>0 \\
		(x'+w',-x_d-w_d)& \mbox{ if } x_d+w_d<0 
	\end{cases}
\end{equation}
and the underlying alpha stable process is the process which is moved back inside $\Omega$ by a mirror reflection about the boundary $\pa\Omega$ upon leaving the domain (see \cite{Cesbron18,Barles14}). \\
More recently, with A. Mellet and M. Puel, I considered in \cite{CesbronMelletPuel} the linear Boltzmann model \eqref{eq:LinBoltz} with diffusive boundary condition \eqref{def:DiffuseBC}. In that case, the asymptotic operator is
\begin{equation} \label{def:DelsN}
	(-\Delta)^s_{\mbox{\tiny N}} \rho  = - \gamma_0\int_\Omega \na \rho(y) \cdot\frac{y-x}{|x-y|^{N+2s}}\d y
\end{equation}
for some explicit $\gamma_0>0$. This operator is neither the regional fractional Laplacian, nor the operator \eqref{def:DelsSR}, and as far as we know the stochastic process it generates has yet to be constructed. Furthermore, this operator can be written in divergence form as $(-\Delta)^s_{\mbox{\tiny N}} [\rho] = \div D^{2s-1}[\rho]$ where $D^{2s-1}[\rho]$ is a non-local gradient of order $2s-1$ defined as 
\begin{equation} \label{def:Dlim}
	\Dlim [\psi] (x) = \gamma_0 \int_{w\cdot n<0} \Big( \psi(x+ w) - \psi(x) \Big) \frac{w}{|w|^{d+2s}} \d w
\end{equation}
and the non-local diffusion equation we have derived is then supplemented by the following Neumann-type condition
$$  D^{2s-1}[\rho] \cdot n=0 \qquad \mbox{ on } \pa\Omega.$$
In particular, note that while the operator $D^{2s-1}$ is non-local, the boundary condition itself is only assumed to hold on the boundary $\pa\Omega$. This is thus different from the non-local Neumann problem introduced in \cite{DiPierroRosotonValdinoci17}, where the Neumann condition is set in $\RR^d\setminus \Omega$. In \cite{CesbronMelletPuel}, we also proved well-posedness in $C^0(0,+\infty;L^2(\Omega)\cap L^2(0,+\infty; \mathcal{D}((-\Delta)^s_{\mbox{\tiny N}}) )$ of the fractional Neumann problem 
\begin{equation} \label{eq:LimPbDiffusive}
	\left\{ \begin{aligned} 
		& \pa_t \rho + (-\Delta)^s_{\mbox{\tiny N}} \rho = 0 & \mbox{ in } (0,+\infty)\times\Omega,\\
		& \Dlim [\rho](x) \cdot n = 0 & \mbox{ on } (0,+\infty)\times\dO, \\
		& \rho(0,x) = \rho_{in} (x) & \mbox{ in } \Omega
	\end{aligned} \right.
\end{equation}
for any $\rho_{in} \in L^2(\Omega)$. \\

The purpose of the present paper is to consider general Maxwell boundary condition \eqref{def:MaxwellBC} for the Linear Boltzmann model \eqref{eq:LinBoltzrescaled} in a half-space $\Omega = \RR^d_+$. To that end, we will first unify the methods developed in \cite{Cesbron18} and \cite{CesbronMelletPuel} to study diffusion limits respectively for specular and diffuse reflections.

\subsection{Main results and outline of the paper}
The existence of solutions to \eqref{eq:LinBoltz} with boundary condition \eqref{def:MaxwellBC} is a delicate problem because it is difficult to control the trace $\gamma_+ f$ in an appropriate functional space, see e.g. \cite{Mischler10}.
Note that for a  given test function  $\phi\in\mathcal D ([0,\infty)\times\overline\Omega\times\RR^d)$,  
smooth solutions of \eqref{eq:LinBoltzrescaled}-\eqref{def:MaxwellBC} with $\alpha \in [0,1]$ satisfies
\begin{align*}
	& - \underset{\RR^+\times\Omega\times\RR^d}{\iiint} f^\eps \Big(  \pa_t \phi +\eps^{1-2s} v \cdot \na_x\phi \Big) \d v \d x \d t \\
	&+ \eps^{1-2s} \underset{\RR^+\times \Gamma_+}{\iint} \gamma_+ f^\eps  \bigg( \gamma_+ \phi - \mathcal B ^*_\alpha[\gamma_- \phi] \bigg) |v\cdot n|\d v\d S(x) \d t\\
	& =\eps^{-2s}\underset{\RR^+\times\Omega\times\RR^d}{\iiint} f^\eps L^*(\phi) \d v\d x\d t+ \underset{\Omega\times\RR}{\iint} f_{in} (x,v) \phi(0,x,v) \d  x \d v.
\end{align*}
with 
\begin{align*}
	L^* (\phi) (t,x,v) = \nu_0 \bigg( \int_{\RR^d} \phi(t,x,w) F(w) \d w - \phi(t,x,v) \bigg) 
\end{align*}
and for any $(x,v)\in\Gamma_+$
\begin{align} \label{def:Bdiffstar}
	\mathcal B ^*_\alpha [\gamma_- \phi ] (x,v)= (1-\alpha)\mathcal{B}^*_{\SR} [\gamma_- \phi] (x,v) + \alpha \mathcal{B}^*_{\D} [\gamma_- \phi] (x,v)
\end{align}
where 
\begin{equation} \label{eq:Bstar}
	\left\{ \begin{aligned} 
		& \mathcal{B}^*_{\SR} [\gamma_- \phi] (x,v) = \gamma_- \phi\big( t,x, v - 2(v\cdot n(x)) n(x) \big),\\
		& \mathcal{B}^*_{\D} [\gamma_- \phi] (x,v) = c_0  \int_{w\cdot n(x)<0} \gamma_-\phi(t,x,w) F(w) |w\cdot n(x)|\d w.
	\end{aligned} \right.
\end{equation}
A classical way of defining weak solutions of \eqref{eq:LinBoltzrescaled}-\eqref{def:MaxwellBC} without having to deal with the trace $\gamma f$ is then the following:
\begin{defi}\label{def:weaksolKin}
	We say that a function $f(t,x,v)$ in $L^2_{F^{-1}}((0,\infty)\times \Omega\times\RR^d)$ is a weak solution of \eqref{eq:LinBoltzrescaled}-\eqref{def:MaxwellBC} if for any test functions  $\phi(t,x,v)$ such that 
	$\phi$, $\pa_t \phi$ and $v\cdot \na_x \phi$ are $L^2_{F}((0,\infty)\times \Omega\times\RR^d)$
	and which satisfies the boundary condition
	$$\gamma_+ \phi = \mathcal B_\alpha^*[\gamma_- \phi] $$
	the following equality holds:
	\begin{align}
		&- \underset{\RR^+\times\Omega\times\RR^d}{\iiint} f^\eps \Big(  \pa_t \phi +\eps^{1-2s} v \cdot \na_x\phi \Big) \d v\d x\d t \nonumber \\
		& \qquad\quad =\eps^{-2s}\underset{\RR^+\times\Omega\times\RR^d}{\iiint} f^\eps L^*(\phi) \d v\d x\d t + \underset{\Omega\times\RR^d}{\iint} f_{in} (x,v) \phi(0,x,v) \d x\d v.\label{eq:weak0}
	\end{align}
\end{defi}
Here and in the rest of the paper, we used the notation
$$ L^2_{F^{-1}} ((0,\infty)\times \Omega\times\RR^d)= \left\{f(t,x,v) \, ;\, \int_0^\infty\int_\Omega\int_{\RR^d} |f(t,x,v)|^2 \frac{1}{F(v)}\, dv\, dx\, dt<\infty\right\}$$
and a similar definition for $L^2_{F}((0,\infty)\times \Omega\times\RR^d)$.

Our first theorem concern the specular reflection case for which the method we develop in this paper is particularly efficient. This is the only case in this paper where we will consider convex domains and not just half-spaces. We will give a precise characterisation of admissible domains in Section \ref{subsec:FT}, note in particular that the result holds for the unit ball in $\RR^d$. In order to state our result, let us define the operator $\L_{\SR}$ as 
\begin{equation}\label{def:LSR}
	\L_{\SR} [\psi](x) = -\gamma_1 P.V. \int_{\RR^d} \frac{ \psi(x) - \psi\big( \eta(x,v)\big) }{|v|^{d+2s}} \d  v
\end{equation}
where $\eta$ the flow of free transport with specular reflections, see Section \ref{subsec:FT} for details, and the constant $\gamma_1$ is given by
\begin{equation}\label{def:gamma1}
	\gamma_1 = \gamma \nu_0^{1-2s} \Gamma(2s+1)
\end{equation}
with $\gamma$ the constant of the velocity equilibrium $F$, see \eqref{eq:F0}. Note that the operator $\L_{\SR}$ is equal, up to a negative constant, to $\DelsSR$ introduced in \cite{Cesbron18} and restated above in \eqref{def:DelsSR}. We have changed the constant in an effort to homogenise the notations of this paper. Furthermore, we also recall the definition of the functional space $\HSRs(\Omega)$ introduced in \cite{Cesbron18}:
\begin{equation} \label{def:HSRs}
	\HSRs (\Omega) = \Big\{ \psi\in L^2(\Omega): \, \iint_{\Omega\times\RR^d} \Big( \psi(x) - \psi\big(\eta(x,v)\big)\Big)^2\frac{1}{|v|^{d+2s}} \d v \d x <\infty \Big\} .
\end{equation}
Our first result reads as follows
\begin{thm} \label{thm:SR}
	Assume $F$ satisfies \eqref{eq:F0} with $s\in(0,1)$ and let $\Omega$ be an admissible domain in the sense of Definition \ref{def:admissibleOmega}. Assume that $f_\eps$ is a weak solution of \eqref{eq:LinBoltzrescaled}-\eqref{def:SpecularBC} in $\RR_+\times\Omega\times\RR^d$ in the sense of Definition \ref{def:weaksolKin}. Then $f_\eps$ converges weakly in $L^\infty(0,+\infty; L^2_{F^{-1}}(\Omega\times\RR^d))$, as $\eps$ goes to $0$, to the function $\rho(t,x)F(v)$ where $\rho$ is the unique weak solution in $C^0(0,+\infty;L^2(\Omega))\cap L^2(0,+\infty,\HSRs (\Omega))$ to
	\begin{equation} \label{eq:LimpbSR}
		\left\{ \begin{aligned}
			&\pa_t \rho - \L_{\SR} [\rho] = 0  & \mbox{ in }\RR_+\times\Omega,\\
			&\rho(0,x) = \rho_{in}(x) & \mbox { in }\Omega.
		\end{aligned} \right.
	\end{equation}
\end{thm}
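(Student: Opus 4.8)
The plan is to run the moment (weak-convergence) method, in the improved form of \cite{CesbronMelletPuel}, organised around the explicit free-transport flow $\eta$ of Section \ref{subsec:FT}; the reader may keep in mind the model case $\Omega=\RR^d_+$ with $\eta$ as in \eqref{def:etaHS}. The only structural facts about $\eta$ that the argument needs are those guaranteed by Definition \ref{def:admissibleOmega}: $\eta$ is well defined, preserves $|v|$, and for each $w$ the map $\eta(\cdot,w):\Omega\to\Omega$ preserves Lebesgue measure with locally finite multiplicity. \textbf{Step 1 (a priori estimates).} First I would use $f_\eps/F$ as a multiplier in \eqref{eq:LinBoltzrescaled} and integrate over $\Omega\times\RR^d$: the transport term becomes the boundary flux $\tfrac{\eps}{2}\int_{\dO}\!\int_{\RR^d}(v\cdot n)(\gamma f_\eps)^2/F$, which \emph{vanishes} for \eqref{def:SpecularBC} because $v\mapsto\Rx v$ is a measure-preserving, $|v|$-preserving involution that exchanges $\Sigma_\pm$ and reverses the sign of $v\cdot n$. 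Hence, with $\rho_\eps=\int f_\eps\,\d v$,
\[
\tfrac{\eps^{2s}}{2}\,\tfrac{d}{dt}\!\int_{\Omega\times\RR^d}\!\frac{f_\eps^2}{F}\,\d v\,\d x=-\nu_0\!\int_{\Omega\times\RR^d}\!\frac{(f_\eps-\rho_\eps F)^2}{F}\,\d v\,\d x\le0 ,
\]
which, after integration in time, bounds $f_\eps$ in $L^\infty\big(0,\infty;L^2_{F^{-1}}(\Omega\times\RR^d)\big)$ uniformly in $\eps$ and gives $\|f_\eps-\rho_\eps F\|_{L^2((0,\infty)\times\Omega;L^2_{F^{-1}}(\RR^d))}\le C\eps^{s}$; by Cauchy--Schwarz and $\int F=1$ the first bound controls $\rho_\eps$ in $L^\infty(0,\infty;L^2(\Omega))$. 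Along a subsequence $f_\eps\rightharpoonup f$, necessarily $f=\rho F$ with $\rho_\eps\rightharpoonup\rho$ and $\rho(0,\cdot)=\rho_{in}$.

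\textbf{Step 2 (the corrector).} The key device is, for fixed $\varphi\in C^\infty_c([0,\infty)\times\bO)$, the test function
\[
\phi_\eps(t,x,v)=\nu_0\int_0^{\infty}e^{-\nu_0\sigma}\,\varphi\big(t,\eta(x,\eps\sigma v)\big)\,\d\sigma ,
\]
the bounded solution of $\nu_0\phi_\eps-\eps\, v\cdot\na_x\phi_\eps=\nu_0\varphi$. It satisfies the adjoint specular condition $\gamma_+\phi_\eps=\mathcal B^*_{\SR}[\gamma_-\phi_\eps]$ \emph{by construction}, since $\sigma\mapsto\eta(x,\eps\sigma v)$ traces the specularly reflected characteristics. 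Using Jensen's inequality and the measure-preservation of $\eta(\cdot,w)$ one checks $\|\phi_\eps\|_{L^2_F}\le C\|\varphi\|_{L^2(\Omega)}$ \emph{uniformly} in $\eps$, and the same for $\pa_t\phi_\eps$; from the equation $v\cdot\na_x\phi_\eps=\tfrac{\nu_0}{\eps}(\phi_\eps-\varphi)\in L^2_F$, so $\phi_\eps$ is admissible in Definition \ref{def:weaksolKin}. Inserting it into \eqref{eq:weak0} and moving the transport term to the right-hand side,
\[
-\iiint f_\eps\,\pa_t\phi_\eps=\iiint f_\eps\,\mathcal R_\eps+\iint f_{in}\,\phi_\eps(0,\cdot),\qquad \mathcal R_\eps:=\eps^{1-2s}v\cdot\na_x\phi_\eps+\eps^{-2s}L^*(\phi_\eps),
\]
the triple integrals over $(0,\infty)\times\Omega\times\RR^d$, the double one over $\Omega\times\RR^d$. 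Using $L^*(\phi_\eps)=\nu_0(\rho_{\phi_\eps}-\phi_\eps)$ with $\rho_{\phi_\eps}=\int\phi_\eps F\,\d v$, and the equation for $\phi_\eps$, the two $\na_x$-terms cancel, leaving $\mathcal R_\eps=\eps^{-2s}\nu_0(\rho_{\phi_\eps}-\varphi)$, which is \emph{independent of $v$}, so that $\iiint f_\eps\mathcal R_\eps=\iint\rho_\eps\mathcal R_\eps$.

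\textbf{Step 3 (the limit, and well-posedness of the target).} To identify the limiting equation I would compute $\lim_{\eps\to0}\mathcal R_\eps$. Since $\int F=\int_0^\infty\nu_0 e^{-\nu_0\sigma}\,\d\sigma=1$, $\rho_{\phi_\eps}-\varphi=\nu_0\int_0^\infty e^{-\nu_0\sigma}\!\int_{\RR^d}[\varphi(t,\eta(x,\eps\sigma v))-\varphi(t,x)]F(v)\,\d v\,\d\sigma$. I would split the $v$-integral at $|v|=1$: on $\{|v|<1\}$ the radial symmetry of $F$ annihilates the first-order term and the remainder is $O(\eps^{2-2s})\to0$; on $\{|v|>1\}$, substituting $w=\eps\sigma v$ and using $\big|F(v)-\tfrac{\gamma}{|v|^{d+2s}}\big|\le\tfrac{C}{|v|^{d+4s}}$ from \eqref{eq:F0}, the quantity $\eps^{-2s}F(w/(\eps\sigma))(\eps\sigma)^{-d}$ converges to $\gamma\sigma^{2s}|w|^{-d-2s}$. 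With $\int_0^{\infty}e^{-\nu_0\sigma}\sigma^{2s}\,\d\sigma=\nu_0^{-(2s+1)}\Gamma(2s+1)$, this yields, quantitatively because $\varphi\in C^2$ and $F$ has finite $2s$-moment,
\[
\mathcal R_\eps\ \longrightarrow\ \gamma\,\nu_0^{1-2s}\Gamma(2s+1)\ \mathrm{P.V.}\!\int_{\RR^d}\frac{\varphi(\eta(x,v))-\varphi(x)}{|v|^{d+2s}}\,\d v\ =\ \L_{\SR}[\varphi]\quad\text{strongly in }L^2 .
\]
Passing to the limit (also $\phi_\eps\to\varphi$, $\pa_t\phi_\eps\to\pa_t\varphi$ in $L^2_F$, $f_\eps\rightharpoonup\rho F$, $\rho_\eps\rightharpoonup\rho$) gives, for every such $\varphi$, the weak formulation of \eqref{eq:LimpbSR}:
\[
-\!\!\underset{\RR^+\times\Omega}{\iint}\!\!\rho\,\pa_t\varphi\,\d x\,\d t-\!\!\underset{\RR^+\times\Omega}{\iint}\!\!\rho\,\L_{\SR}[\varphi]\,\d x\,\d t=\!\int_\Omega\!\rho_{in}\,\varphi(0,\cdot)\,\d x .
\]
From Step 1, $\rho\in L^\infty(0,\infty;L^2(\Omega))$; a mollification-in-time argument then promotes it to $\rho\in C^0(0,\infty;L^2(\Omega))\cap L^2(0,\infty;\HSRs(\Omega))$ with the energy equality $\tfrac12\|\rho(T)\|^2_{L^2}+\tfrac{\gamma_1}{2}\int_0^T\!\iint_{\Omega\times\RR^d}\frac{(\rho(x)-\rho(\eta(x,v)))^2}{|v|^{d+2s}}\,\d v\,\d x\,\d t=\tfrac12\|\rho_{in}\|^2_{L^2}$, and uniqueness in this class is the usual energy argument (the difference $\psi$ of two such solutions with equal data obeys $\tfrac{d}{dt}\|\psi\|^2_{L^2}=-\gamma_1\iint\frac{(\psi(x)-\psi(\eta(x,v)))^2}{|v|^{d+2s}}\le0$, so $\psi\equiv0$) — essentially the well-posedness of \eqref{eq:LimpbSR} from \cite{Cesbron18}. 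Since every subsequence of $(\rho_\eps)$ has a further subsequence converging to this same $\rho$, the full family converges, completing the proof.

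\textbf{The hard part.} I expect the main obstacle to be Step 2 together with the strong convergence in Step 3: (i) verifying that $\phi_\eps$ is a \emph{genuine} admissible test function — because $F$ has only $2s<2$ finite velocity moments, the uniform $L^2_F$ bounds on $\phi_\eps$ and $\pa_t\phi_\eps$ really require exploiting both the exponential weight $e^{-\nu_0\sigma}$ and the measure-preservation of $\eta$, and the adjoint specular boundary condition must be checked along the reflected characteristics; and (ii) upgrading $\mathcal R_\eps\to\L_{\SR}[\varphi]$ to a topology strong enough to survive the pairing with the merely weakly convergent $\rho_\eps$, while keeping control of the principal value near $v=0$, the reflection of $\eta$ near $\dO$, and — beyond the half-space, for general admissible domains — the possibly several successive reflections along a characteristic. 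This is exactly where the fine properties of $\eta$ established in Section \ref{subsec:FT} are indispensable.
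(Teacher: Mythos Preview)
Your proposal is essentially the paper's own argument: the same corrector $\phi_\eps=\nu_0\int_0^\infty e^{-\nu_0\sigma}\varphi\big(t,\eta(x,\eps\sigma v)\big)\,\d\sigma$, the same cancellation leading to a $v$-independent $\mathcal R_\eps$, and the same identification of the limit as $\L_{\SR}[\varphi]$, with uniqueness of \eqref{eq:LimpbSR} imported from \cite{Cesbron18}. The organisational difference is that the paper first substitutes $w=\sigma v$ to produce the kernel $F_1(w)=\int_0^\infty e^{-\nu_0\tau}\nu_0^2\tau^{-d}F(w/\tau)\,\d\tau$ and then analyses $G=F_1-\gamma_1|w|^{-d-2s}$ via a dedicated decay lemma, splitting in $|w|$ rather than in $|v|$; this is cleaner because all the $\sigma$-dependence is absorbed once and for all into $F_1$, but the content is the same as your split.

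One point you should tighten: for $s\ge 1/2$ the ``remainder is $O(\eps^{2-2s})$'' step in your Step 3 is not automatic for arbitrary $\varphi\in C^\infty_c([0,\infty)\times\bO)$. The second-order Taylor expansion has to be taken for $v\mapsto\varphi(\eta(x,v))$, not for $\varphi$ alone, and $D^2_v[\varphi\circ\eta]$ is only controlled in the required weighted $L^2$ sense under the admissibility hypothesis \eqref{eq:admissibleSobolev}, which in turn asks that $\na_x\varphi\cdot n=0$ on $\dO$. The paper accordingly restricts the test functions to the class $\mathfrak D^s$ (smooth, compactly supported, with $\na_x\varphi\cdot n=0$ on $\dO$ when $s\ge 1/2$); for $s<1/2$ a first-order expansion suffices and no boundary condition on $\varphi$ is needed. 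You correctly flag this as the hard part, but you should impose the condition explicitly at the outset rather than only in the commentary.
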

As expected, this asymptotic behaviour is the same as the one established in \cite{Cesbron18} for a Vlasov-Lévy-Foker-Planck model. \\

Our second theorem concerns the diffusive boundary condition. The theorem itself is exactly the same as the main result of \cite{CesbronMelletPuel} although the proof will be different and, in particular, it leads us to define the limit operator $\L_{\D}$ with the following decomposition 
\begin{equation} \label{def:LDlim} 
	\L_{\D}[\psi] = - \gamma_{d,s}\Delsreg[\psi] - \kappa[\psi]
\end{equation}
where $\Delsreg$ is defined in \eqref{def:regionalDels}, the constant $\gamma_{d.s}$ is given by $\gamma_{d,s} := \frac{\gamma_1}{c_{d,s}}$ and 
\begin{equation} \label{def:kappalim}
	\kappa[\psi](x) :=   \pv \int_{x+v\notin\Omega} \Big( \psi(x) - \psi(x_f)\Big) \frac{ \gamma_1 }{|v|^{d+2s}} \d v
\end{equation}
with $x_f=x_f(x,v)$ the forward exit point:
\begin{equation} \label{def:xftf}
	\left\{ \begin{aligned}
		& x_f(x,v) = x + \tau_f(x,v) v \in \dO \\
		& \tau_f (x,v) = \inf \lbrace \tau>0 : \, x+\tau v \notin \Omega \rbrace .
	\end{aligned} \right.
\end{equation}
Nevertheless, one can easily check, using results from \cite{CesbronMelletPuel}, that $\L_{\D} = -(-\Delta)^s_{\mbox{\tiny N}}$ expressed in \eqref{def:DelsN} above and so we still have $\L_{\D} = \div \Dlim$ with $\Dlim$ given by \eqref{def:Dlim} and with $\gamma_0 = \gamma \nu_0^{1-2s} \Gamma(2s)$. Moreover, noticing that $\na_v \cdot\left(\frac{v}{|v|^{d+2s}}\right)  = -\frac{2s}{|v|^{d+2s}}$ and recalling the fact that $v\cdot \na_v x_f(x,v) = 0$ we get with integration by parts  
\begin{equation} \label{def:kappalim2}
	\kappa[\psi](x) =  \gamma_0 \pv \int_{\dO} \Big( \psi(x)-\psi(y)\Big) \frac{ (x-y)\cdot n(y) }{|y-x|^{d+2s}} \d \sigma(y) .
\end{equation}
which is also a corollary of \cite[Lemma 2.5]{CesbronMelletPuel}. The theorem reads
\begin{thm}\label{thm:Diff}
	Assume $F$ satisfies \eqref{eq:F0} with $s\in(1/2,1)$ and let $\Omega$ be the half-space $\RR^d_+$. Assume that $f_\eps$ is a weak solution of \eqref{eq:LinBoltzrescaled}-\eqref{def:DiffuseBC} in $\RR_+\times\Omega\times\RR^d$ in the sense of Definition \ref{def:weaksolKin}. Then $f_\eps$ converges weakly in $L^\infty(0,+\infty; L^2_{F^{-1}}(\Omega\times\RR^d))$, as $\eps$ goes to $0$, to the function $\rho(t,x)F(v)$ where $\rho$ satisfies:
	\begin{equation} \label{eq:LimpbDif}
		\iint_{\RR_+\times\Omega} \rho(t,x) \Big( \pa_t \psi(t,x) + \L_{\D}[\psi] \Big) \d t \d x + \int_\Omega \rho_{in} (x)\psi(0,x)\d x = 0 
	\end{equation}
	for all test function $\psi\in W^{1,\infty}(0,+\infty;H^2(\Omega))$ such that $\L_{\D} [\psi] \in L^2(\RR_+\times\Omega)$ and 
	\begin{equation} \label{eq:BDLimpbDiff}
		\Dlim [\psi](t,x) \cdot n(x) = 0  \qquad (t,x) \in \RR_+\times\dO. 
	\end{equation}
\end{thm}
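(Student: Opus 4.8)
The plan is to follow the general moment-method strategy for fractional diffusion limits, adapting the improved technique announced for the specular case to the diffusive boundary condition, while keeping the domain $\Omega = \RR^d_+$ so that the geometry of the exit points $x_f(x,v)$ and the backward entry points is explicit. First I would establish the a priori estimates: testing the weak formulation \eqref{eq:weak0} against $f_\eps$ itself (formally), the dissipativity of $L$ in $L^2_{F^{-1}}$ together with mass conservation on the boundary (which holds for the diffusive reflection since $F$ satisfies the boundary condition) gives a uniform bound on $f_\eps$ in $L^\infty(0,+\infty;L^2_{F^{-1}}(\Omega\times\RR^d))$ and on $\eps^{-s}(f_\eps - \rho_\eps F)$ in $L^2$, where $\rho_\eps = \int f_\eps\,dv$. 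Consequently, up to a subsequence, $f_\eps \rightharpoonup \rho F$ weakly-$*$, and $\rho_\eps \rightharpoonup \rho$ in a suitable sense; the fluctuation $g_\eps := \eps^{-s}(f_\eps - \rho_\eps F)$ is bounded. This is the standard part.

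The heart of the proof is the construction of a good family of test functions. Given $\psi \in W^{1,\infty}(0,+\infty;H^2(\Omega))$ with $\L_\D[\psi]\in L^2$ and satisfying the Neumann-type condition \eqref{eq:BDLimpbDiff}, I would seek $\phi_\eps(t,x,v) = \psi(t,x) + \eps \chi_\eps(t,x,v) + \dots$ where $\chi_\eps$ is chosen so that $\phi_\eps$ satisfies the dual kinetic boundary condition $\gamma_+\phi_\eps = \mathcal B_\alpha^*[\gamma_-\phi_\eps]$ (here with $\alpha=1$, i.e.\ purely diffusive, so $\gamma_+\phi_\eps = c_0\int_{w\cdot n<0}\gamma_-\phi_\eps(w)F(w)|w\cdot n|\,dw$) and so that the transport term $\eps^{1-2s} v\cdot\na_x\phi_\eps$ combined with $\eps^{-2s}L^*(\phi_\eps)$ produces, in the limit, exactly $\L_\D[\psi]$. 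The natural ansatz, following the auxiliary-problem philosophy of \cite{CesbronMelletPuel}, is to let $\chi_\eps$ solve a stationary transport equation along characteristics; because $\Omega$ is a half-space, characteristics are straight lines reflected (or, for the diffusive operator, the relevant contributions come from lines hitting $\dO$ at $x_f$), and one can write $\chi_\eps$ by explicit integration of $L^*$ along the flow. Plugging this into \eqref{eq:weak0}, the boundary term vanishes by construction, and one is left to show that $\eps^{-2s}\iiint f_\eps L^*(\phi_\eps) + \eps^{1-2s}\iiint f_\eps\, v\cdot\na_x\phi_\eps$ converges to $\iint \rho\,\L_\D[\psi]$. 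The decomposition \eqref{def:LDlim} of $\L_\D$ into the regional fractional Laplacian piece $-\gamma_{d,s}\Delsreg[\psi]$ plus the boundary piece $\kappa[\psi]$ mirrors the split between contributions from characteristics that stay in $\Omega$ and those that exit through $\dO$; one recognises the $2s$-homogeneity scaling (the factor $\Gamma(2s+1)$ in $\gamma_1$) exactly as in the whole-space case, using assumption \eqref{eq:F0} on the tail of $F$ to control the error $|F(v) - \gamma|v|^{-d-2s}|$.

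The main obstacle I expect is twofold. First, the rigorous justification that the test function $\chi_\eps$ is admissible in Definition \ref{def:weaksolKin} — i.e.\ that $\chi_\eps$, $\pa_t\chi_\eps$ and $v\cdot\na_x\chi_\eps$ lie in $L^2_F$ — is delicate because solving the stationary transport problem along characteristics that graze the boundary can create singularities in $v$ near $v\cdot n = 0$; here the half-space geometry and the explicit form of $x_f$ are essential to get the needed integrability, and one must use $s>1/2$ (so that the first moment of $F$ is finite, which is exactly what makes $c_0$ and hence the diffusive boundary operator well defined) together with the regularity $\psi\in H^2$. Second, passing to the limit in the boundary-exit term and identifying it with $\kappa[\psi]$ in the form \eqref{def:kappalim2} requires an integration by parts in $v$ using $\na_v\cdot(v|v|^{-d-2s}) = -2s|v|^{-d-2s}$ and $v\cdot\na_v x_f = 0$, uniformly in $\eps$; controlling the remainder terms in this integration by parts, especially near $\dO$, is where most of the technical work lies.

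Finally, uniqueness of the limit $\rho$ in the stated sense is inherited from the well-posedness of the fractional Neumann problem \eqref{eq:LimPbDiffusive} established in \cite{CesbronMelletPuel}, since $\L_\D = -(-\Delta)^s_{\mbox{\tiny N}}$ and the weak formulation \eqref{eq:LimpbDif}–\eqref{eq:BDLimpbDiff} is precisely the weak formulation of that problem; hence the whole family $f_\eps$ (not just a subsequence) converges. I would close the argument by remarking that the initial datum is recovered in the limit from the term $\iint f_{in}\phi_\eps(0,\cdot)$, which converges to $\int_\Omega\rho_{in}\psi(0,\cdot)$ since $\phi_\eps(0,\cdot)\to\psi(0,\cdot)$ and $\int f_{in}\,dv = \rho_{in}$.
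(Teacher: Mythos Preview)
Your overall strategy---a priori estimates, construction of adapted test functions via an auxiliary stationary transport problem, then identification of the limiting operator---is the one the paper follows. However, there are two points where your sketch diverges from the actual argument, and one of them is a genuine over-claim.

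First, the test function is not built as a truncated expansion $\phi_\eps = \psi + \eps\chi_\eps + \dots$ but as the \emph{exact} inverse $\phi_\eps := A_\eps^{-1}[\nu_0\psi]$ of the operator $A_\eps = \nu_0 - \eps v\cdot\nabla_x$ on the domain incorporating the dual diffusive boundary condition. In the half-space this inverse has an explicit integral formula along characteristics (Proposition~\ref{prop:APDiffHS}), and plugging it into the weak formulation produces a clean \emph{three-term} decomposition: an interior piece $\L_\eps[\psi]$ converging to $-\gamma_{d,s}\Delsreg[\psi]$, an exit-point piece $\kappa_\eps[\psi]$ converging to $\kappa[\psi]$, and a genuine \emph{boundary integral} $\int_{\partial\Omega} A_\eps^{-1}[\nu_0\rho_\eps]\,\Deps[\psi]\cdot n\,d\sigma$. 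The Neumann condition \eqref{eq:BDLimpbDiff} is used exclusively to show this last boundary integral vanishes in the limit (Proposition~\ref{prop:CvDepsDiffHS}); it plays no role in identifying $\kappa[\psi]$, which you seem to conflate with the boundary term. The integration-by-parts identity you mention for $\kappa$ is only used to rewrite $\kappa$ in the alternative form \eqref{def:kappalim2}, not to pass to the limit.

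Second, your final paragraph asserts that uniqueness of $\rho$ follows from the well-posedness result of \cite{CesbronMelletPuel}, and hence that the whole family (not just a subsequence) converges. This is not what the theorem claims, and the paper explicitly flags the gap: the weak formulation \eqref{eq:LimpbDif} is tested against $\psi\in W^{1,\infty}(0,+\infty;H^2(\Omega))$ with $\L_\D[\psi]\in L^2$ and $\Dlim[\psi]\cdot n=0$, whereas uniqueness in \cite{CesbronMelletPuel} is in $C^0(L^2)\cap L^2(\mathcal D(\L_\D))$. Identifying these two notions requires regularity theory for \eqref{eq:LimPbDiffusive} that is stated as work in progress. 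So the conclusion of Theorem~\ref{thm:Diff} is only that any weak-$*$ limit $\rho$ satisfies \eqref{eq:LimpbDif}; you should not claim convergence of the full family.
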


Finally, our third and fourth theorems concern the Maxwell boundary conditions with accommodation coefficient $\alpha \in (0,1)$ in a half-space $\Omega= \RR^{d-1}\times\RR_+^*$. The first concerns the fractional diffusion limit and reads
\begin{thm}\label{thm:Maxwell}
	Assume $F$ satisfies \eqref{eq:F0} with $s\in(1/2,1)$ and let $\Omega$ be the half-space $\RR^{d-1}\times\RR_+^*$. Assume that $f_\eps$ is a weak solution of \eqref{eq:LinBoltzrescaled}-\eqref{def:MaxwellBC} in $\RR_+\times\Omega\times\RR^d$ in the sense of Definition \ref{def:weaksolKin}. Then $f_\eps$ converges weakly in $L^\infty(0,+\infty; L^2_{F^{-1}}(\Omega\times\RR^d))$, as $\eps$ goes to $0$, to the function $\rho(t,x)F(v)$ where $\rho$ satisfies:
	\begin{equation} \label{eq:LimpbMax}
		\iint_{\RR_+\times\Omega} \rho(t,x) \Big( \pa_t \psi(t,x) +  (1-\alpha) \L_{\SR}[\psi] + \alpha \L_{\D}[\psi] \Big) \d t \d x + \int_\Omega \rho_{in} (x)\psi(0,x)\d x = 0 
	\end{equation}
	for all test function $\psi\in W^{1,\infty}(0,+\infty;H^2(\Omega))$ such that $ (1-\alpha) \L_{\SR} [\psi]+ \alpha \L_{\D}[\psi] \in L^2(\RR_+\times\Omega)$ and 
	\begin{equation} \label{eq:BDLimpbMax}
		\Dlim [\psi](t,x) \cdot n(x) = 0  \qquad (t,x) \in \RR_+\times\dO. 
	\end{equation}
\end{thm}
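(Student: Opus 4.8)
The plan is to adapt the moment method of \cite{CesbronMelletPuel}, organised so that the Maxwell problem reduces, after a well-chosen test function, to a convex combination of the specular case (Theorem~\ref{thm:SR}) and the diffusive case (Theorem~\ref{thm:Diff}). The starting point is the energy estimate: formally testing \eqref{eq:LinBoltzrescaled} against $f_\eps/F$ produces the collision dissipation $-\nu_0\eps^{-2s}\int_\Omega\!\int_{\RR^d}(f_\eps-\rho_\eps F)^2/F$ together with the boundary flux $\eps^{1-2s}\int_{\dO}\!\int_{\RR^d}(v\cdot n)(\gamma f_\eps)^2/F$, the latter being nonnegative because $\mathcal{B}_\alpha$ is a contraction on $L^2_{F^{-1}}(\Sigma_+,|v\cdot n|\,dv\,dS)$ (the specular part an isometry, the diffuse part a contraction by Jensen's inequality, hence so is the convex combination). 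This yields the uniform bounds $\|f_\eps\|_{L^\infty(0,\infty;L^2_{F^{-1}})}\le\|f_{in}\|_{L^2_{F^{-1}}}$ and $\eps^{-2s}\int_0^\infty\!\int_\Omega\!\int_{\RR^d}(f_\eps-\rho_\eps F)^2/F\le C$, which carry over to weak solutions in the sense of Definition~\ref{def:weaksolKin} by a standard regularisation. Along a subsequence, $f_\eps\rightharpoonup\rho(t,x)F(v)$ weakly-$*$ in $L^\infty(0,\infty;L^2_{F^{-1}})$ with $\rho_\eps:=\int_{\RR^d}f_\eps\,dv\rightharpoonup\rho$ in $L^\infty(0,\infty;L^2(\Omega))$, the second bound forcing the limit into $\ker L$.

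The heart of the proof is the construction of the test function. Given $\psi$ in the class of \eqref{eq:LimpbMax}, I define $\phi_\eps(t,x,v)$, for each $t$, as the solution of the stationary transport problem
\[
\nu_0\,\phi_\eps-\eps\,v\cdot\na_x\phi_\eps=\nu_0\,\psi\quad\text{in }\Omega\times\RR^d,\qquad \gamma_+\phi_\eps=\mathcal{B}^*_\alpha[\gamma_-\phi_\eps]\ \text{on }\Sigma_+,
\]
obtained by integrating along the rays $\sigma\mapsto x+\eps\sigma v$ and imposing \eqref{def:Bdiffstar}--\eqref{eq:Bstar} at the forward exit point $x_f$. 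In a half-space a straight ray leaves $\Omega$ at most once: after one specular reflection, or when restarted with a velocity drawn by the diffuse kernel at a boundary point, it points into $\Omega$ and never meets $\dO$ again. Hence the reflection tree generated by $\mathcal{B}^*_\alpha=(1-\alpha)\mathcal{B}^*_{\SR}+\alpha\mathcal{B}^*_{\D}$ has depth one, and a direct computation gives
\[
\phi_\eps=(1-\alpha)\,\phi^{\SR}_\eps+\alpha\,\phi^{\D}_\eps,
\]
where $\phi^{\SR}_\eps(t,x,v)=\nu_0\int_0^\infty e^{-\nu_0\sigma}\psi\big(t,\eta(x,\eps\sigma v)\big)\,d\sigma$ and $\phi^{\D}_\eps$ are the test functions underlying the proofs of Theorems~\ref{thm:SR} and~\ref{thm:Diff}. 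Since $|\phi_\eps|\le\|\psi\|_\infty$, $\pa_t\phi_\eps$ is given by the same formula applied to $\pa_t\psi$, and $v\cdot\na_x\phi_\eps=\tfrac{\nu_0}{\eps}(\phi_\eps-\psi)\in L^2_F$ for each fixed $\eps$, the function $\phi_\eps$ is an admissible test function in Definition~\ref{def:weaksolKin}.

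One then passes to the limit. Because $\phi_\eps$ solves the stationary equation, there is the pointwise, velocity-independent identity
\[
\eps^{1-2s}\,v\cdot\na_x\phi_\eps+\eps^{-2s}L^*(\phi_\eps)=\eps^{-2s}\nu_0\Big(\int_{\RR^d}\phi_\eps\,F\,dv-\psi\Big)=:R_\eps(t,x),
\]
so inserting $\phi_\eps$ into \eqref{eq:weak0} makes the transport--collision term collapse to $-\iint_{\RR_+\times\Omega}\rho_\eps\,R_\eps$. Since $\phi_\eps\to\psi$ and $\pa_t\phi_\eps\to\pa_t\psi$ strongly in $L^2_F$ by dominated convergence, the time-derivative term and the initial term pass to the limit by weak--strong convergence against $f_\eps\rightharpoonup\rho F$. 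Writing $R_\eps=(1-\alpha)R^{\SR}_\eps+\alpha R^{\D}_\eps$ with $R^{\SR}_\eps,R^{\D}_\eps$ the analogous residuals of $\phi^{\SR}_\eps,\phi^{\D}_\eps$, the estimates that establish Theorems~\ref{thm:SR} and~\ref{thm:Diff}---relying on the tail expansion \eqref{eq:F0}, the restriction $s>1/2$ for the diffuse part, and the boundary condition \eqref{eq:BDLimpbMax}---give $R^{\SR}_\eps\to\L_{\SR}[\psi]$ and $R^{\D}_\eps\to\L_{\D}[\psi]$ strongly in $L^2(\RR_+\times\Omega)$, whence $\iint\rho_\eps\,R_\eps\to\iint\rho\big((1-\alpha)\L_{\SR}[\psi]+\alpha\L_{\D}[\psi]\big)$. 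Collecting the limits yields \eqref{eq:LimpbMax}, first for smooth compactly supported $\psi$ satisfying \eqref{eq:BDLimpbMax} and then, by density, for the full class; together with the accompanying uniqueness statement for \eqref{eq:LimpbMax} this upgrades the subsequential convergence to convergence of the whole family $f_\eps$.

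The main obstacle is the \emph{strong} $L^2$ convergence of $R^{\SR}_\eps$ and $R^{\D}_\eps$: mere weak convergence would not suffice, since $\rho_\eps$ is only known to converge weakly. This amounts to controlling the boundary layer of $\int_{\RR^d}\phi_\eps\,F\,dv-\psi$ near $\dO$, and it is precisely the compatibility condition $\Dlim[\psi]\cdot n=0$ in \eqref{eq:BDLimpbMax} that cancels the otherwise surviving boundary contribution of order $\eps^{2s}$. A secondary difficulty is justifying the depth-one branching rigorously---measurability of the exit times $\tau_f$, and integrability of the singular velocity integrals defining $\phi^{\D}_\eps$---which is exactly where the half-space assumption is needed: in a general convex domain the reflection tree is infinite and would require resummation.
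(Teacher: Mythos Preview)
Your approach is essentially the paper's: construct $\phi_\eps=A_\eps^{-1}[\nu_0\psi]$ satisfying the adjoint Maxwell condition, observe that in the half-space the reflection tree has depth one so that $\phi_\eps=(1-\alpha)\phi^{\SR}_\eps+\alpha\phi^{\D}_\eps$, and then pass to the limit by combining the convergence results of Sections~\ref{sec:SR} and~4. The paper organises the decomposition slightly differently---it writes the residual as $(1-\alpha)\L^\eps_{\SR}[\psi]+\alpha(\L^\eps[\psi]+\kappa_\eps[\psi])$ plus a \emph{boundary} integral on $\dO$---but the content is identical.

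One point deserves sharpening. You assert that $R^{\D}_\eps\to\L_{\D}[\psi]$ strongly in $L^2(\RR_+\times\Omega)$, but this is not what is actually proved (nor is it true in general). In the diffusive case the residual splits as a bulk part $\L_\eps[\psi]+\kappa_\eps[\psi]$, which does converge strongly in $L^2$ to $\L_{\D}[\psi]$ (Proposition~\ref{prop:CvLepsdiff}), plus a boundary-layer contribution which, after the change of variables $\mathcal{P}_\eps$, becomes $\int_{\dO}A_\eps^{-1}[\nu_0\rho_\eps]\,\Deps[\psi]\cdot n\,d\sigma$. This last term is handled not by $L^2(\Omega)$-convergence but by the estimate $\eps^{-1}\!\int_{\dO}|\Deps[\psi]\cdot n|^2\,d\sigma\to 0$ of Proposition~\ref{prop:CvDepsDiffHS} (which uses \eqref{eq:BDLimpbMax}) combined with the bound $\int_{\dO}|A_\eps^{-1}[\nu_0\rho_\eps]|^2\,d\sigma\le\eps^{-1}\|\rho_\eps\|_{L^2}^2$. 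Your closing paragraph shows you are aware of this mechanism, but the phrasing ``$R^{\D}_\eps\to\L_{\D}[\psi]$ strongly in $L^2$'' obscures it; the correct statement is that $\int_\Omega\rho_\eps R^{\D}_\eps\,dx\to\int_\Omega\rho\,\L_{\D}[\psi]\,dx$, with the two pieces converging by different arguments.

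A second minor point: you invoke uniqueness to upgrade subsequential convergence to full convergence. The paper notes (Remark~4 after the statement of Theorem~\ref{thm:MaxwellSWP}) that this identification is not yet available in the Maxwell and diffusive cases, because the weak limit $\rho$ is not known to lie in the class $\mathcal{C}^0(0,\infty;L^2)\cap L^2(0,\infty;\mathcal{D}(\L_{\M}))$ where Theorem~\ref{thm:MaxwellSWP} gives uniqueness. So the conclusion should remain at the level stated in Theorem~\ref{thm:Maxwell}.
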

This theorem will come as a corollary of Theorem \ref{thm:SR} and Theorem \ref{thm:Diff}. We conclude this paper with a well-posedness result for this new limit problem. Introducing the notation 
\begin{equation}\label{def:LM}
	\L_{\M} [\psi] = (1-\alpha) \L_{\SR}[\psi] + \alpha \L_{\D}[\psi]
\end{equation}
for any fixed $\alpha \in [0,1]$ and $s\in(1/2,1)$, we have
\begin{thm} \label{thm:MaxwellSWP}
	For all $\rho_{in} \in L^2(\Omega)$, the evolution problem
	\begin{equation} \label{eq:LimpbMaxwell} 
		\left\{ \begin{aligned}
			& \pa_t \rho - \L_{\M} [\rho] = 0 &\mbox{ in } (0,+\infty)\times\Omega \\
			&\alpha\Dlim [\rho](x)\cdot n(x) = 0 & \mbox{ on } (0,+\infty)\times\dO \\
			&\rho(0,x) = \rho_{in} (x) & \mbox{ in } \Omega .
		\end{aligned} \right.
	\end{equation}
	has a unique weak solution $\rho \in \mathcal{C}^0(0,+\infty; L^2(\Omega)) \cap L^2(0,+\infty; \mathcal{D}(\L_{\M}))$ with 
	\begin{equation} \label{def:DLM}
		\mathcal{D}(\L_{\M}):= \Big\{ \psi\in H^s(\Omega) ; \, \L_{\M}[\phi]\in L^2(\Omega) \quad \mbox{and}\quad  \alpha\Dlim[\psi]\cdot n =0 \mbox{ on } \dO \Big\}. 
	\end{equation}
\end{thm}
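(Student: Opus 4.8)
The plan is to realise $\L_{\M}$ as minus the self-adjoint operator associated with a symmetric, nonnegative, closed bilinear form on $L^2(\Omega)$, and then to read off existence, uniqueness and regularity from the corresponding analytic semigroup — equivalently, from J.-L.~Lions' variational theory of parabolic equations. For $\alpha=0$ the statement is contained in the well-posedness part of Theorem~\ref{thm:SR} (for any admissible $\Omega$), and for $\alpha=1$ it is the well-posedness result of \cite{CesbronMelletPuel}; the point is therefore to treat all $\alpha\in[0,1]$ uniformly, which in the half-space $\Omega=\RR^d_+$ is done by combining those two building blocks.

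\textbf{Step 1: the energy form.} I would introduce
$$ \mathcal{E}_{\M}(\psi,\varphi) := (1-\alpha)\,\mathcal{E}_{\SR}(\psi,\varphi) + \alpha\,\mathcal{E}_{\D}(\psi,\varphi), $$
where $\mathcal{E}_{\SR}(\psi,\varphi):=\langle -\L_{\SR}[\psi],\varphi\rangle_{L^2(\Omega)}$, which by the measure-preserving involution attached to the specular flow $\eta$ (Section~\ref{subsec:FT}, \cite{Cesbron18}) equals the manifestly symmetric and nonnegative quantity $\tfrac{\gamma_1}{2}\iint_{\Omega\times\RR^d}\big(\psi(x)-\psi(\eta(x,v))\big)\big(\varphi(x)-\varphi(\eta(x,v))\big)\,|v|^{-d-2s}\,\d v\,\d x$, and $\mathcal{E}_{\D}(\psi,\varphi)$ is the form of $-\L_{\D}=(-\Delta)^s_{\mbox{\tiny N}}$ from \cite{CesbronMelletPuel}, which (via the divergence structure $\L_{\D}=\div\Dlim$ together with $\Dlim[\psi]\cdot n=0$ on $\dO$) equals $\int_\Omega \Dlim[\psi]\cdot\na\varphi\,\d x$ for $\psi$ in the operator domain and, by \eqref{def:LDlim}, \eqref{def:kappalim2} and the regional fractional Laplacian \eqref{def:regionalDels}, extends to $H^s(\Omega)\times H^s(\Omega)$ as $\tfrac{\gamma_1}{2}\iint_{\Omega\times\Omega}\frac{(\psi(x)-\psi(y))(\varphi(x)-\varphi(y))}{|x-y|^{d+2s}}\,\d x\,\d y$ plus a boundary contribution coming from $\kappa$ that makes sense precisely because $s>1/2$. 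I would then record three structural facts: $\mathcal{E}_{\M}$ is symmetric and nonnegative (each piece is — the diffusive one by the identities of \cite{CesbronMelletPuel}, or by the energy-dissipation structure inherited from the kinetic model); it is continuous on $H^s(\Omega)\times H^s(\Omega)$; and it satisfies a G{\aa}rding inequality $\mathcal{E}_{\M}(\psi,\psi)+\lambda\|\psi\|_{L^2(\Omega)}^2\ge\beta\|\psi\|_{H^s(\Omega)}^2$ with $\beta,\lambda>0$ independent of $\alpha$, since both $\mathcal{E}_{\SR}$ and $\mathcal{E}_{\D}$ dominate the regional Gagliardo seminorm up to lower-order terms (for $\mathcal{E}_{\D}$ the boundary term from $\kappa$ involves only $s-\tfrac12$ derivatives of the trace and is absorbed by interpolation). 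In particular the form domain of $\mathcal{E}_{\M}$ is $\HSRs(\Omega)$, which for $\Omega=\RR^d_+$ coincides with $H^s(\Omega)$ with equivalent norms.

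\textbf{Step 2: operator, semigroup, uniqueness.} With $V:=\HSRs(\Omega)$ and $H:=L^2(\Omega)$, the form $\mathcal{E}_{\M}$ is closed, symmetric and nonnegative on the dense domain $V\subset H$, so by Kato's first representation theorem it is the form of a unique nonnegative self-adjoint operator, which I would identify with $-\L_{\M}$ on the domain $\mathcal{D}(\L_{\M})$ of \eqref{def:DLM}: testing $\mathcal{E}_{\M}(\psi,\cdot)$ against $\varphi\in C_c^\infty(\Omega)$ yields $-\L_{\M}[\psi]$ in the distributional sense, while testing against $\varphi\in C^\infty(\overline\Omega)$ not vanishing on $\dO$ and using the Green-type formula for the diffusive part (from \cite{CesbronMelletPuel}; the specular part carries no boundary term) leaves the surviving boundary term $\alpha\int_{\dO}(\Dlim[\psi]\cdot n)\,\varphi\,\d\sigma$, whose vanishing for all such $\varphi$ forces exactly $\alpha\Dlim[\psi]\cdot n=0$ on $\dO$; conversely any $\psi\in\mathcal{D}(\L_{\M})$ lies in $V$ and satisfies $\mathcal{E}_{\M}(\psi,\varphi)=\langle -\L_{\M}[\psi],\varphi\rangle$ for all $\varphi\in V$. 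Hence $\L_{\M}$ generates an analytic self-adjoint contraction semigroup $(S(t))_{t\ge0}$ on $L^2(\Omega)$, and $\rho(t):=S(t)\rho_{in}$ is the desired solution: $\rho\in C^0([0,+\infty);L^2(\Omega))$, $\rho(0)=\rho_{in}$, and for $t>0$ one has $\rho(t)\in\mathcal{D}(\L_{\M})$ and $\pa_t\rho=\L_{\M}[\rho]$ in $L^2(\Omega)$ with the boundary condition built into $\mathcal{D}(\L_{\M})$; the time-integrability asserted in the theorem follows from the energy identity $\tfrac12\|\rho(T)\|_{L^2}^2+\int_0^T\mathcal{E}_{\M}(\rho,\rho)\,\d t=\tfrac12\|\rho_{in}\|_{L^2}^2$ together with analytic-semigroup smoothing. (Equivalently, a Galerkin scheme in the Gelfand triple $V\hookrightarrow H\hookrightarrow V'$ — Lions' abstract parabolic theorem — produces the same $\rho$.) For uniqueness, if $\rho_1,\rho_2$ are two solutions in the stated class with the same datum, their difference $w$ lies in $L^2_{\mathrm{loc}}(0,+\infty;V)$ with $\pa_t w\in L^2_{\mathrm{loc}}(0,+\infty;V')$, so the chain rule in the triple gives $\tfrac12\frac{d}{dt}\|w(t)\|_{L^2}^2=\langle\pa_t w,w\rangle=\langle\L_{\M}[w],w\rangle=-\mathcal{E}_{\M}(w,w)\le0$, whence $w\equiv0$ (even using only the G{\aa}rding inequality, $\|w(t)\|_{L^2}^2\le\|w(0)\|_{L^2}^2\,e^{2\lambda t}=0$). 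Finally, Theorems~\ref{thm:SR}, \ref{thm:Diff} and \ref{thm:Maxwell} show that the weak limit $\rho$ of $f_\eps$ is a weak solution of \eqref{eq:LimpbMaxwell} in precisely this class, hence coincides with the solution just constructed.

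I expect the main obstacle to be Step 1, and within it the diffusive piece $\mathcal{E}_{\D}$: turning the operator $\L_{\D}$ of \eqref{def:LDlim}, together with its natural boundary condition, into a genuinely symmetric, nonnegative, closed form on $H^s(\Omega)$, and in particular handling the boundary contribution coming from $\kappa$ — this is exactly where the hypothesis $s>1/2$ enters, through fractional trace theory — well enough both to absorb it in the G{\aa}rding estimate and to show that the operator domain of the form is precisely $\mathcal{D}(\L_{\M})$ with the weak sense of $\alpha\Dlim[\psi]\cdot n=0$. A secondary technical point is the identification $\HSRs(\RR^d_+)=H^s(\RR^d_+)$ with equivalent norms, which ensures that the specular and diffusive form domains coincide and can be combined for every $\alpha\in[0,1]$; once these are in place, the parabolic machinery in Step 2 is routine.
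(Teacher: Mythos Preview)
Your proposal is correct and follows essentially the same route as the paper. The paper decomposes the bilinear form as $a=(1-\alpha)a_{\SR}+\alpha a_{\D}$, imports the continuity and $H^s$-coercivity of each piece from \cite{Cesbron18} and \cite{CesbronMelletPuel} (including the identification $\HSRs(\RR^d_+)=H^s(\RR^d_+)$, which you flag as a secondary technical point and the paper handles in a short remark), applies Lax--Milgram to the resolvent problem $\psi-\L_{\M}[\psi]=g$, identifies the operator domain via testing against $\mathcal{D}(\Omega)$ then $\mathcal{D}(\bO)$, and concludes by Hille--Yosida; your use of Kato's representation theorem and analytic-semigroup/Lions machinery is an equivalent packaging of the same argument.
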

Note that this last theorem naturally includes the well-posedness results established in \cite{Cesbron18} and \cite{CesbronMelletPuel} for the cases $\alpha=0$ and $\alpha=1$ respectively. This motivates the coefficient $\alpha$ in the boundary condition.  \\
We make some remarks about these results:

\begin{enumerate}
	
	\item There is a significant difference between Theorem \ref{thm:SR} and the Theorems \ref{thm:Diff} and \ref{thm:Maxwell}. In the specular case, we are able to identify the weak limit $\rho$ of the kinetic solution $f_\eps$ with the unique weak solution to \eqref{eq:LimpbSR} in $C^0(0,+\infty;L^2(\Omega)) \cap L^2(0,+\infty; \HSRs(\Omega))$ thanks to a detailed analysis of the boundary behaviour of the solution to this problem, we refer the interested reader to \cite[Section 5.3]{Cesbron18}. However, similar identifications are not yet available in the diffusive case, and consequently in the Maxwell case. In both those cases, we have proved that the weak limit $\rho$ of (a subsequence of) $f_\eps$ is solution to the limit problem in the sense stated in the theorems, and we have also proved uniqueness of weak solutions as stated for the Maxwell case in Theorem \ref{thm:MaxwellSWP}. However, we cannot identify these solutions yet. So far, in the diffuse reflections case, we have only been able to do this identification in an interval in dimension 1 in \cite{CesbronMelletPuel2} by adding stronger conditions on the test functions in \eqref{eq:LimpbDif} to the price of an additional assumption on the initial data $\fin$. 
%
	\item Using the following integration by parts formulae proved in \cite{Cesbron18,CesbronMelletPuel}
	\begin{align*}
		&\int_\Omega \psi \L_{\SR} [\phi] \d x  = \int_{\Omega} \phi \L_{\SR} [\psi] \d x, \\
		&\int_{\Omega} \psi \L_{\D}[\phi] \d x - \int_{\Omega} \phi \L_{\D} [\psi] \d x = \int_{\dO} \big[ \psi \Dlim [\phi] \cdot n - \phi \Dlim [\psi] \cdot n \big] \d \sigma(x) 
	\end{align*}
	we see that, assuming these formulae hold for $\psi$ and $\phi$ in $\mathcal{D}(\L_{\M})$, the equation \eqref{eq:LimpbMax} can be seen as a weak formulation of the fractional Neumann boundary problem \eqref{eq:LimpbMaxwell}.
	\item We would like to emphasise the fact that, although in this paper the results for the Maxwell boundary conditions appear to be a sum of the phenomena from the pure specular and pure diffusive cases, it is because we are in the half-space and we do not expect to have such a simple interaction between the specular and diffusive conditions in a more general convex domain. Morally, the half-space is a very particular case because the trajectories associated with the free-transport part of the kinetic model interact at most once with the boundary. 
	
\end{enumerate}

\paragraph{Outline of the paper:} The paper is organised as follows: in Section 2 we recall some useful results about the kinetic equation \eqref{eq:LinBoltzrescaled} and the free transport equation in order to prove Theorem \ref{thm:SR} and Theorem \ref{thm:Diff}. In Section 3 we will see that Theorem \ref{thm:Maxwell} comes as a corollary of the previous two theorems and then focus on the proof of well-posedness of \eqref{eq:LimpbMaxwell}, i.e. Theorem \ref{thm:MaxwellSWP}. We conclude this paper with Appendix \ref{subsection:CVPhieps} where we prove a lemma of convergence of test functions which is useful in the previous three sections, we chose to prove this lemma in a independent section to avoid repetitions and to emphasise on the convergence of operators in the other sections. 

\section{The specular and the diffuse reflections boundary conditions}

\subsection{Preliminary results}

\subsubsection{A priori estimates}

Let us recall the following classical result which shows the convergences of $f_\eps$, solution to \eqref{eq:LinBoltzrescaled}-\eqref{def:MaxwellBC}, toward the thermodynamical equilibrium, i.e. the kernel of $L$: 
\begin{lemma} \label{lem:Apriori}
	Let $f_{in}$ be in $L^2_{F^{-1}}(\Omega\times\RR^N)$. The weak solution $f^\eps$ of \eqref{eq:LinBoltzrescaled} with boundary condition \eqref{def:MaxwellBC} satisfies, up to a subsequence
	$$
	f^\eps \rightarrow \rho(t,x) F(v) \quad \text{ weakly in } L^\infty(0,+\infty; L^2_{F^{-1}}(\Omega\times\RR^N))
	$$
	where $\rho(t,x)$ is the weak limit of $\rho^\eps (t,x) = \int_{\RR^N} f^\eps \d v $ and, moreover, 
	$$
	\lVert f^\eps - \rho_\eps F \lVert_{L^2_{F^{-1}}(\Omega\times\RR^N)} \rightarrow 0 \quad \text{ as } \eps \rightarrow 0. 
	$$
\end{lemma}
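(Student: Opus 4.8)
The plan is to run the natural relative-energy estimate for \eqref{eq:LinBoltzrescaled} --- whose dissipation term measures precisely the $L^2_{F^{-1}}$-distance of $f^\eps$ to $\ker L$ --- and then to extract weak limits. Concretely, I would (formally for now) multiply the rescaled equation $\eps^{2s}\pa_t f^\eps+\eps\,v\cdot\na_x f^\eps=L(f^\eps)$ by $f^\eps/F$ and integrate over $\Omega\times\RR^d$. The time term gives $\tfrac{\eps^{2s}}{2}\tfrac{\d}{\d t}\int_\Omega\iRd\tfrac{(f^\eps)^2}{F}\d v\d x$; the transport term, by the divergence theorem, becomes the boundary flux $\tfrac{\eps}{2}\iRd\int_{\dO}(v\cdot n(x))\tfrac{(\gamma f^\eps)^2}{F}\d\sigma(x)\d v$; and the collision term, using $\rho^\eps=\iRd f^\eps\d v$ and $\iRd F\d v=1$, equals $-\nu_0\int_\Omega\iRd\tfrac{(f^\eps-\rho^\eps F)^2}{F}\d v\d x$, since $\iRd\tfrac{(f^\eps-\rho^\eps F)^2}{F}\d v=\iRd\tfrac{(f^\eps)^2}{F}\d v-(\rho^\eps)^2\ge 0$.

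Next I would check that the boundary flux is nonnegative --- this is the classical boundary-dissipation (Darroz\`es--Guiraud) inequality for Maxwell conditions. Writing $\gamma_- f^\eps=\B_\alpha[\gamma_+ f^\eps]=\alpha\BD[\gamma_+ f^\eps]+(1-\alpha)\BSR[\gamma_+ f^\eps]$ and using convexity of $t\mapsto t^2$, it suffices to estimate separately the incoming fluxes of the specular and diffuse parts. For the specular part, the change of variables $v\mapsto v-2(v\cdot n(x))n(x)$, which maps $\Sigma_-$ onto $\Sigma_+$ with unit Jacobian and preserves $|v\cdot n|$, together with the radial symmetry of $F$, gives $\int_{\Sigma_-}|v\cdot n|\tfrac{(\BSR[\gamma_+ f^\eps])^2}{F}=\int_{\Sigma_+}(v\cdot n)\tfrac{(\gamma_+ f^\eps)^2}{F}$. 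For the diffuse part, $\BD[\gamma_+ f^\eps](x,\cdot)$ is proportional to $F$ with factor $c_0$ times the local outgoing flux of $\gamma_+ f^\eps$, so Cauchy--Schwarz together with the definition of $c_0$ yields $\int_{\Sigma_-}|v\cdot n|\tfrac{(\BD[\gamma_+ f^\eps])^2}{F}\le\int_{\Sigma_+}(v\cdot n)\tfrac{(\gamma_+ f^\eps)^2}{F}$. Combining these, $\iRd\int_{\dO}(v\cdot n)\tfrac{(\gamma f^\eps)^2}{F}\d\sigma\d v\ge 0$.

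Integrating in time on $(0,t)$ and discarding the nonnegative boundary term then gives
\begin{equation*}
\frac{\eps^{2s}}{2}\int_\Omega\iRd\frac{(f^\eps(t))^2}{F}\d v\d x+\nu_0\int_0^t\int_\Omega\iRd\frac{(f^\eps-\rho^\eps F)^2}{F}\d v\d x\d s\le\frac{\eps^{2s}}{2}\int_\Omega\iRd\frac{(f_{in})^2}{F}\d v\d x .
\end{equation*}
Hence $\lVert f^\eps(t)\rVert_{L^2_{F^{-1}}(\Omega\times\RR^d)}\le\lVert f_{in}\rVert_{L^2_{F^{-1}}(\Omega\times\RR^d)}$ for a.e.\ $t$, so along a subsequence $f^\eps\rightharpoonup g$ weakly in $L^\infty(0,+\infty;L^2_{F^{-1}}(\Omega\times\RR^d))$; and $\lVert f^\eps-\rho^\eps F\rVert^2_{L^2((0,+\infty);L^2_{F^{-1}}(\Omega\times\RR^d))}\le\frac{\eps^{2s}}{2\nu_0}\lVert f_{in}\rVert^2_{L^2_{F^{-1}}}\to 0$, which is the stated strong convergence. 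Finally, since $(\rho^\eps)^2\le\iRd\tfrac{(f^\eps)^2}{F}\d v$ by Cauchy--Schwarz, $\rho^\eps$ is bounded in $L^\infty(0,+\infty;L^2(\Omega))$ and, up to a further subsequence, $\rho^\eps\rightharpoonup\rho$; then $\rho^\eps F\rightharpoonup\rho F$ in $L^2_{F^{-1}}$, and adding the strong convergence $f^\eps-\rho^\eps F\to 0$ identifies $g=\rho F$.

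The main obstacle is to make the above licit for weak solutions in the sense of Definition \ref{def:weaksolKin}: the weight $1/F$ is unbounded, $f^\eps/F$ is not itself an admissible test function, and the trace $\gamma_\pm f^\eps$ need not a priori lie in a space in which the boundary flux is meaningful (cf.\ \cite{Mischler10}). I would handle this in the classical way --- obtaining $f^\eps$ as a limit of solutions of a regularized problem (e.g.\ with added velocity diffusion, mollified transport, and a truncation away from $v=0$) for which the energy identity is valid, then passing to the limit using weak lower semicontinuity of the $L^2_{F^{-1}}$ norms, or equivalently arguing at the level of renormalized solutions. The algebraic core --- the sign of the collision term and of the Maxwell boundary flux --- is unaffected by these technicalities, so this step is essentially bookkeeping, which is why the result is quoted as classical.
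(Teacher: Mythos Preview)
Your proposal is correct and follows exactly the classical weighted $L^2_{F^{-1}}$ energy estimate the paper alludes to: the paper does not give its own proof of this lemma but refers to \cite{Cesbron18} and \cite{CesbronMelletPuel} for the cases $\alpha=0$ and $\alpha=1$, stating that the general Maxwell case is a direct corollary. Your argument --- multiplying by $f^\eps/F$, identifying the dissipation as $\nu_0\lVert f^\eps-\rho^\eps F\rVert_{L^2_{F^{-1}}}^2$, showing the boundary flux is nonnegative via the specular isometry and the diffuse Cauchy--Schwarz (Darroz\`es--Guiraud) inequality combined by convexity, and then extracting weak limits --- is precisely that classical computation, including the honest acknowledgement that the trace-level justification for weak solutions requires the regularization/renormalization machinery of \cite{Mischler10}.
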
 
This lemma is proved using equation \eqref{eq:LinBoltzrescaled} to control the weighted $L^2_F$-norm of the solution $f_\eps$, and formal estimates of the trace that follow from the boundary conditions. In an effort of concision we will not repeat this proof here and refer the interested reader e.g. to \cite[Proposition 1.2]{Cesbron18} and \cite[Lemma 2.1]{CesbronMelletPuel} for the proofs in the cases $\alpha=0$ and $\alpha=1$ respectively, the proof for any $\alpha \in (0,1)$ is a direct corollary of those two particular cases.

\subsubsection{The free transport equation} \label{subsec:FT}

In this section we consider the free transport equation in a bounded domain $\Omega$ with specular reflections on the boundary and initial data uniform in velocity
\begin{equation} \label{eq:FreeTransport}
	\left\{ \begin{aligned}
		& \pa_t f + v\cdot \na_x f = 0 & \mbox{ in } \RR_+\times\Omega\times\RR^d \\
		& f(0,x,v) = f_{in} (x) &\mbox{ in } \Omega\times\RR^d\\
		&\gamma_- f(t,x,v) = \gamma_+ f\big(t,x, v-2(v\cdot n(x)) n(x) \big) & \mbox{ on } \RR_+\times\Gamma_-.
	\end{aligned} \right.
\end{equation}
This equation will play a crucial role in the study of the asymptotic behaviour of Linear Boltzmann equation with specular reflections in Section \ref{sec:SR}. In particular, we are interested in the propagation of Sobolev regularity and our requirement for such propagation will give rise to our definition of admissible domain $\Omega$ for which Theorem \ref{thm:SR} holds. Although the general propagation of Sobolev (or H\"older) regularity for this transport equation is still an open question, we do have some results on the regularity of the spatial flow which are sufficient in the context of fractional diffusion limits and which we shall recall now. \\
The characteristic equation associated with \eqref{eq:FreeTransport} reads
\begin{equation*}
	\left\{ \begin{aligned}
		& \dot{X}_t = V_t, & X_0= x, \\
		& \dot{V}_t = 0, & V_0 = v,\\
		& V_{t^+} = \mathcal{R}_{X_t} (V_{t^-}) & \mbox{ for all } t \mbox{ such that } X_t \in \dO. 
	\end{aligned}\right.
\end{equation*}
with $R_y(w) = w - 2 (w\cdot n(y)) n(y)$ is the specular reflection operator, with $n(y)$ the outward normal vector at $y\in\dO$. \\
We denote $F_t$ the  flow of our transport problem: for all $(t,x,v) \in \RR_+\times\Omega\times\RR^d$ we have $\mathcal{F}_t(x,v) := (X_t(x,v), V_t(x,v))$. We then have the following existence result from \cite{Halpern}
\begin{thm}[Theorem 3, \cite{Halpern}]  \label{thm:etaexist}
	Let us call $\zeta$ the function such that 
	$$ \Omega = \lbrace x\in\RR^d / \zeta(x)<0 \rbrace \hspace{0.2cm} \text{and} \hspace{0.2cm} \dO =\lbrace x\in\RR^d / \zeta(x)=0 \rbrace.$$
	If $\zeta$ has a bounded third derivative and nowhere vanishing curvature in the sense that there exists a constant $C_\zeta >0$ such that for all $\xi\in\RR^d$:
	\begin{align} \label{def:convexityFlow}
		\underset{i,j=1}{\overset{d}{\sum}} \xi_i \frac{\pa^2 \zeta}{\pa x_i \pa x_j} \xi_j \geq C_\zeta |\xi|^2
	\end{align} 
	then $\mathcal{F}_t(x,v)$ is well defined for all $(x,v)\in\Omega\times\RR^d$.
\end{thm}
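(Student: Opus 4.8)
The plan is to show that the specular billiard flow $F_t(x,v)=(X_t,V_t)$ is globally defined by establishing two separate facts: that each individual reflection is unambiguously determined, and that the reflection times never accumulate at a finite time. Since specular reflection preserves $|V_t|$, I first normalise to $|v|=1$, so that time equals arclength along the broken trajectory and the statement becomes purely geometric. Note also that the hypothesis $D^2\zeta \ge C_\zeta I$ makes $\zeta$ strictly convex and coercive, so $\Omega=\{\zeta<0\}$ is a bounded convex body whose boundary hypersurface has all principal curvatures bounded below by a positive constant.

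\emph{Well-definedness of each reflection.} Starting from $(x,v)\in\Omega\times\RR^d$, consider $g(t)=\zeta(x+tv)$. We have $g(0)<0$, $g$ is strictly convex because $g''(t)=v^{\mathsf T}D^2\zeta\,v\ge C_\zeta>0$, and $g\to+\infty$ at $\pm\infty$ by coercivity; hence $g$ has exactly one forward zero $t_+>0$, at which $g'(t_+)=\na\zeta(x+t_+v)\cdot v>0$. This gives a unique forward exit point $x_1=x+t_+v\in\dO$ and \emph{transversal} crossing, so the outward normal $n_1$ satisfies $v\cdot n_1>0$ and the reflected velocity $v_1=v-2(v\cdot n_1)n_1$ points strictly inward, $v_1\cdot n_1<0$. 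Applying the same argument to $(x_1,v_1)$ and iterating, I obtain an inductively well-defined, transversal sequence of collisions $0\le t_1<t_2<\cdots$; convexity also guarantees each chord stays strictly inside $\Omega$ away from its endpoints, so no mid-flight tangency occurs. In a bounded convex domain this sequence is necessarily infinite, so the only way the flow could fail to extend to all $t$ is if $t_k\uparrow t_*<\infty$.

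\emph{Ruling out finite-time accumulation --- the crux.} Suppose for contradiction that $t_k\uparrow t_*<\infty$. Then the flight times $\tau_k=t_{k+1}-t_k=\ell_k$ (chord lengths) are summable, so $\ell_k\to 0$, the collision points $x_k$ form a Cauchy sequence converging to some $p\in\dO$, and the glancing angles $\theta_k$ (angle between $V_{t_k}$ and the tangent plane at $x_k$) tend to $0$. I localise near $p$: the limiting trajectory direction and the normal at $p$ span a plane, and since $D^2\zeta\ge C_\zeta I$ forces the normal curvature in every direction to be $\ge c>0$, the near-tangent dynamics reduces, to leading order, to a planar billiard in the osculating section, the transverse corrections being of lower order (this reduction requires a short but routine argument in $\RR^d$). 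In the plane, writing the arc as $y=\tfrac12\kappa_0 x^2+O(x^3)$, elementary reflection geometry gives the exact relation $\theta_k+\theta_{k+1}=\int_{s_k}^{s_{k+1}}\kappa\,ds$ together with $\ell_k\approx 2\theta_k/\kappa_0$, so for constant curvature $\theta_k$ is an exact invariant and \emph{all} drift of the glancing angle comes from the variation of $\kappa$ along the boundary. Here the hypothesis of bounded third derivative of $\zeta$ is decisive: it makes the curvature Lipschitz, $\kappa\in C^1$, and comparing two consecutive copies of the turning relation yields the key estimate
$$|\theta_{k+1}-\theta_k|\le C\,\theta_k^2 .$$
Consequently $\tfrac{1}{\theta_{k+1}}\le \tfrac{1}{\theta_k}+C'$, so $\theta_k\gtrsim 1/k$ while the trajectory remains in a fixed small neighbourhood of $p$, whence $\ell_k\approx 2\theta_k/\kappa_0\gtrsim 1/k$ and $\sum_k\ell_k=+\infty$. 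This contradicts $\sum_k\ell_k=|v|\,t_*<\infty$, and therefore $t_k\to+\infty$ and $F_t$ is defined for all $t\ge 0$.

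The main obstacle is exactly the glancing-angle estimate $|\theta_{k+1}-\theta_k|\le C\theta_k^2$: with only a $C^2$ boundary the curvature is merely continuous, its oscillations are uncontrolled, and one can drive $\theta_k\to 0$ fast enough that $\sum\ell_k<\infty$ --- this is the ``strange billiard table'' phenomenon, and it shows the $C^3$ (Lipschitz-curvature) assumption is essentially sharp rather than technical. The other genuine technical point is the reduction of the near-tangent dynamics in $\RR^d$ to the two-dimensional osculating plane, for which I would track the velocity's component normal to that plane and show it stays of lower order than $\theta_k$ throughout the accumulating window; the strict lower bound $C_\zeta$ on all normal curvatures is what keeps this reduction uniform.
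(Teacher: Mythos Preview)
The paper does not give its own proof of this statement: the theorem is quoted verbatim as ``Theorem 3, \cite{Halpern}'' and is used as a black-box input in Section~\ref{subsec:FT}, with no argument supplied beyond the informal remark that the strong convexity assumption \eqref{def:convexityFlow} ``ensures that the grazing trajectories stay confined to the grazing set and do not transport singularities inside the domain.'' There is therefore nothing in the paper to compare your proposal against line by line.

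That said, what you have written is the classical argument behind Halpern's result. The two structural pieces --- uniqueness and transversality of each forward exit from strict convexity of $\zeta$, and exclusion of finite-time accumulation via the glancing-angle recursion $|\theta_{k+1}-\theta_k|\le C\theta_k^2$ leading to $\theta_k\gtrsim 1/k$ and hence $\sum\ell_k=+\infty$ --- are exactly the ingredients of the original proof, and your identification of the bounded third derivative as the source of Lipschitz curvature (and of the sharpness of this hypothesis via the ``strange billiard'' counterexamples) is correct. The one place your sketch is genuinely incomplete is the reduction from $\RR^d$ to the osculating plane near the accumulation point; you flag this honestly, but note that controlling the transverse component of the velocity uniformly over the accumulating window is where most of the work lies in dimensions $d\ge 3$, and a full write-up would need to make that estimate explicit rather than declare it ``routine.''
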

Morally, the convexity assumption \eqref{def:convexityFlow} ensures that the grazing trajectories stay confined to the grazing set and do not transport singularities inside the domain. \\
Now that we have existence, we restrict our investigation to the spatial flow at time $t=1$, namely we define a function $\eta$ as, for all $(x,v)\in\Omega\times\RR^d$
\begin{equation}\label{def:eta}
	\eta(x,v) = X_{t=1}(x,v). 
\end{equation}
This function $\eta$ was introduced and studied in \cite{Cesbron18}. When $\Omega$ is a half-space, $\eta$ has a simple explicit expression mentioned above in \eqref{def:etaHS}. When $\Omega$ is a ball, one can also express $\eta$ rather explicitly, we refer the interested reader to \cite[Appendix A]{Cesbron18} for more details. The function $\eta$ plays a crucial role in the fractional diffusion limit of linear kinetic equations with specular reflections, to the point that it become an integral part of the limit operator $\L_{\SR}$ as one can see in \eqref{def:LSR}. As a consequence, it is not surprising that some regularity of $\eta$ is required in order to pass to the limit in the kinetic model. This regularity is entirely dependent on the domain $\Omega$ so that it can be seen as an assumption on the domain itself. This leads us to the following definition of admissible domains. 
\begin{defi} \label{def:admissibleOmega}
	We say that a bounded open domain $\Omega\subset \RR^d$ is admissible if 
	\begin{itemize}
		\item for all $\phi\in C^\infty_c(\bO)$ such that $\pa_n \phi = 0 $ on $\dO$ :
		\begin{equation}  \label{eq:admissibleSobolev}
			\big| D_v^2 [\phi \circ \eta] (x,v)  \big|\in L^2_{\mu}(\Omega\times V) 
		\end{equation}
		where $V\subset \RR^d$, $\mu$ is a radial measure such that $\mu(V)<\infty$ and $|\cdot |$ is any matrix norm,
		\item the map 
		\begin{equation}  \label{eq:admissibleJacobian}
			(x,v)\in \Omega\times\RR^d \mapsto \mathcal{F}_{t=1}(x,v) =\big(\eta(x,v), (v\cdot\na_x)\eta(x,v)\big)
		\end{equation} 
		has a unitary Jacobian determinant 
	\end{itemize}
\end{defi}
These conditions are fulfilled if $\Omega$ is a ball in $\RR^d$, we refer the interested reader to \cite[Lemma A.3, Lemma 5.4]{Cesbron18}. Moreover, note that the second assumption \eqref{eq:admissibleJacobian} should not be absolutely necessary, as long as the Jacobian determinant is finite and never cancels one should be able to adapt our method and derive similar results.

\subsection{The specular reflections case}  \label{sec:SR}

We consider the rescaled Linear Boltzmann equation with specular reflections boundary condition \eqref{eq:LinBoltzrescaled}-\eqref{def:SpecularBC} on an admissible spatial domain $\Omega$ in the sense of Definition \ref{def:admissibleOmega}. \\
Given a test function $\phi$ which satisfies $\gamma_+ \phi = \BSRs [\gamma_- \phi]$ on $\Gamma_+$, the weak solution $f_\eps$ of \eqref{eq:LinBoltzrescaled} in the sense of Definition \ref{def:weaksolKin} satisfies, with $Q=(0,+\infty)\times\Omega\times\RR^d$
\begin{equation} \label{eq:wfLRSR1}
	\begin{aligned}
		&\iiint_{Q} f_\eps \pa_t \phi \d t \d x \d v + \iint_{\Omega\times\RR^d} \fin(x,v) \phi (0,x,v) \d x \d v \\
		&= - \eps^{-2s} \iiint_{Q} \Big[ f_\eps \Big( \eps v\cdot \na_x \phi - \nu_0 \phi \Big) + \nu_0 \rho_\eps F(v) \phi \Big] \d t \d x \d v .
	\end{aligned}
\end{equation}
We introduce the operator $A_\eps$ defined as 
\begin{align} \label{def:AepsSR}
	A_\eps =  \eps v\cdot \na_x -\nu_0 \text{Id} 
\end{align}
on the domain 
\begin{align} \label{def:DAepsSR} 
	\mathcal{D}(A_\eps) = \lbrace \phi \in L^2_{F}(\Omega\times\RR^d): \, v\cdot \na_x \phi \in  L^2_{F}(\Omega\times\RR^d) \mbox{ and } \gamma_+ \phi = \BSRs [\gamma_- \phi] \mbox{ on } \Gamma_+ \rbrace .
\end{align}
We then have the following proposition concerning the inverse of $A_\eps$:
\begin{prop} \label{prop:solAP}
	Given $\psi\in \mathcal{D}\big( [0,+\infty)\times \bO)$, the function $\phi_\eps:= A_\eps^{-1} [- \nu_0\psi]$ can be expressed, using $\eta : \Omega\times\RR^d \mapsto \bO$ defined in \eqref{def:eta}, as
	\begin{equation}
		\phi_\eps(t,x,v) = \int_0^{+\infty} e^{-\nu_0 \tau} \nu_0 \psi\big(t, \eta(x, \eps \tau v) \big) \d \tau .
	\end{equation}
\end{prop}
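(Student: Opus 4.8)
The plan is to verify directly that the right-hand side of the asserted identity, call it $G(t,x,v)$, lies in $\mathcal{D}(A_\eps)$ (see \eqref{def:DAepsSR}) and solves the first-order equation $A_\eps G=\nu_0\psi$; since $A_\eps$ is injective on $\mathcal{D}(A_\eps)$ -- if $A_\eps\phi=0$ then $\eps v\cdot\na_x\phi=\nu_0\phi$, and testing this against $\phi$ in $L^2_F$ makes the boundary flux cancel (the specular condition is conservative), forcing $\nu_0\|\phi\|_{L^2_F}^2=0$ -- this identifies $G=A_\eps^{-1}[\nu_0\psi]=\phi_\eps$, which is the proposition. Heuristically, $G$ is nothing but the Duhamel / variation-of-constants representation obtained by integrating $A_\eps$ along the billiard characteristics of $\eps v\cdot\na_x$: on the curve $\tau\mapsto\eta(x,\eps\tau v)$, which by \eqref{def:eta} is precisely the characteristic issued from $(x,v)$ travelled at speed $\eps|v|$, the transport term turns into $\pa_\tau$, the equation collapses to a scalar linear ODE in $\tau$, and $G(t,x,v)$ is the value at $\tau=0$ of its unique solution that stays bounded as $\tau\to+\infty$.

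The computation rests on two facts about $\eta$. First, the scaling property $\eta(x,\eps\tau v)=X_\tau(x,\eps v)$, where $X_\tau$ is the spatial component of the specular-reflection flow of Theorem \ref{thm:etaexist}; this is immediate from \eqref{def:eta}, and explicit from \eqref{def:etaHS} in the half-space. Second, the transport identity
\[
\eps\, v\cdot\na_x\big[\psi\big(t,\eta(x,\eps\tau v)\big)\big]=\pa_\tau\big[\psi\big(t,\eta(x,\eps\tau v)\big)\big],
\]
valid for a.e.\ $(x,v)\in\Omega\times\RR^d$ and a.e.\ $\tau>0$, which holds because both sides equal $\eps\,(\na_x\psi)\big(t,X_\tau(x,\eps v)\big)\cdot V_\tau(x,\eps v)$ with $V_\tau$ the velocity along the trajectory: differentiating $X_\tau$ in $x$ along the direction $v$ reproduces $\tfrac1\eps\pa_\tau X_\tau=V_\tau$ (elementary before the first reflection, and propagated through reflections by the specular rule). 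Granting these, I would differentiate $G$ under the integral sign -- legitimate since $\psi\in\mathcal{D}([0,+\infty)\times\bO)$ is smooth with bounded derivatives and $e^{-\nu_0\tau}$ is integrable -- use the transport identity to replace $\eps v\cdot\na_x$ by $\pa_\tau$ inside the integral, integrate by parts in $\tau$ (the endpoint at $\tau=+\infty$ contributes nothing because $e^{-\nu_0\tau}\to0$ while $\psi$ stays bounded, and $\psi$ has compact support in time), and recombine the surviving integral with the $-\nu_0 G$ term; this yields the resolvent equation $A_\eps G=\nu_0\psi$.

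It then remains to place $G$ in $\mathcal{D}(A_\eps)$. That $G\in L^2_F(\Omega\times\RR^d)$ is immediate from $\|G\|_{L^\infty}\le\|\psi\|_{L^\infty}$ (using $\int F=1$) and $|\Omega|<\infty$; that $v\cdot\na_x G\in L^2_F$ follows from the transport identity, which exhibits $\eps v\cdot\na_x G$ as a bounded function, once the directional differentiability of $G$ has been secured. The specular boundary condition $\gamma_+G=\BSRs[\gamma_-G]$ on $\Gamma_+$ reduces to $G(t,x,v)=G(t,x,\Rx v)$ for $x\in\dO$, which in turn follows from the geometric identity $\eta(x,\eps\tau v)=\eta(x,\eps\tau\,\Rx v)$ for $x\in\dO$ and all $\tau>0$ -- the specular trajectory from a boundary point is unchanged when the initial velocity is replaced by its mirror image, since the particle bounces instantaneously -- a property of $\eta$ established in \cite{Cesbron18}. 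The main obstacle is precisely this regularity input: one needs $x\mapsto\eta(x,\eps\tau v)$ to be differentiable for a.e.\ $(x,v)$, with the transport identity holding and with enough control in $\tau$ to differentiate under the integral. This is exactly what the admissibility of $\Omega$ (Definition \ref{def:admissibleOmega}) together with Theorem \ref{thm:etaexist} supply, the delicate point being the null set of grazing trajectories; in the half-space \eqref{def:etaHS} makes all of this elementary.
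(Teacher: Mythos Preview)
Your proof is correct and follows essentially the same route as the paper: verify the specular boundary condition via the geometric identity $\eta(x,\eps\tau v)=\eta(x,\eps\tau\,\Rx v)$ for $x\in\dO$, then use the transport identity (the paper phrases this as $\na_v\eta=\na_x\eta$) together with an integration by parts in $\tau$ to recover $A_\eps G=\nu_0\psi$. Your argument is slightly more detailed than the paper's in that you explicitly check the $L^2_F$ membership and the injectivity of $A_\eps$ on $\mathcal{D}(A_\eps)$, points the paper leaves implicit; note only that the bound ``using $\int F=1$'' really comes from $\int_0^\infty \nu_0 e^{-\nu_0\tau}\d\tau=1$, while $\int F=1$ is what passes from $L^\infty$ to $L^2_F$.
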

\begin{proof}
	In this proof we omit the variable $t$ which is just a parameter since $A_\eps$ does not act on $t$. It is easy to check that the boundary condition in \eqref{def:DAepsSR} is satisfied by $\phi_\eps$: for any $(x,v)\in\dO\times\RR^d$ we have $\eta(x,v) = \eta(x, \mathcal{R}_x v)$ so that 
	\begin{align*}
		\phi_\eps \big(x,\mathcal{R}_x v \big) &= \int_0^{+\infty} e^{-\nu_0 \tau} \nu_0 \psi\big( \eta(x, \eps \tau\mathcal{R}_x v) \big) \d \tau \\
		&= \int_0^{+\infty} e^{-\nu_0 \tau} \nu_0 \psi\big( \eta(x, \eps \tau v) \big) \d \tau  \\
		&= \phi_\eps (x,v) .
	\end{align*}
	To prove that $\phi_\eps= A_\eps^{-1} [- \nu_0\psi]$ we first notice that for fixed $(x,v) \in \Omega\times\RR^d$ we have
	\begin{align*}
		\frac{\d }{\d \tau} \Big[ e^{-\nu_0 \tau} \nu_0 \psi\big( \eta(x, \eps \tau v) \big)  \Big] &= - \nu_0 e^{-\nu_0 \tau} \nu_0 \psi\big( \eta(x, \eps \tau v) \big) \\
		&\quad + e^{-\nu_0\tau} \nu_0\eps v \cdot \na_v \eta(x, \eps \tau v) \cdot \na \psi\big( \eta(x, \eps\tau v) \big)
	\end{align*}
	where $v\cdot \na_v \eta = v\cdot \na_x \eta$ by construction of $\eta$ and, moreover we recognise 
	$$\eps v\cdot \na_x \eta(x,\eps \tau v) \na \psi \big( \eta(x, \eps\tau v) \big)  = \eps v\cdot \na_x \big[  \psi \big( \eta(x, \eps \tau v) \big) \big]$$
	hence
	\begin{align*}
		\frac{\d }{\d \tau} \Big[ e^{-\nu_0 \tau} \nu_0 \psi\big( \eta(x, \eps \tau v) \big)  \Big]  &=  \big( \eps v\cdot \na_x -\nu_0\big) \Big[ e^{-\nu_0 \tau} \nu_0\psi\big( \eta(x, \eps \tau v) \big)  \Big] \\
		&= A_\eps  \Big[ e^{-\nu_0 \tau} \nu_0\psi\big( \eta(x, \eps \tau v) \big)  \Big].
	\end{align*}
	A simple integration by parts in $\tau$ concludes the proof since $\psi\big( \eta(x, \eps \tau v) \big)\lvert_{\tau=0} = \psi(x)$, namely:
	\begin{align*}
		A_\eps  \phi_\eps (x,v) &= \int_0^{+\infty} A_\eps \Big[ e^{-\nu_0 \tau} \nu_0\psi\big( \eta(x, \eps \tau v) \big)  \Big] \d \tau\\
		&= \int_0^{+\infty}\frac{\d }{\d \tau} \Big[ e^{-\nu_0 \tau} \nu_0 \psi\big( \eta(x, \eps \tau v) \big)  \Big]  \d \tau \\
		&= - \nu_0 \psi(x) .
	\end{align*}
\end{proof}
The weak formulation of equation \eqref{eq:LinBoltzrescaled} with a test function $\phi_\eps = A_\eps^{-1}[-\nu_0 \psi]$ for a given $\psi\in\mathcal{D}([0,T)\times\bO)$ then reads, since $F$ is normalised:
\begin{align} \label{eq:wfLRSR}
	&\iiint_{Q} f_\eps \pa_t \phi_\eps \d t \d x \d v + \iint_{\Omega\times\RR^d} \fin(x,v) \phi_\eps (0,x,v) \d x \d v  =  - \iint_{\RR_+\times\Omega} \rho_\eps \L^\eps[\psi] \d x \d t 
\end{align}
where $\L^\eps$ is defined as
\begin{align} 
	\mathcal{L}^\eps [\psi] (t,x) &:= \eps^{-2s} \nu_0 \int_{\RR^d} \Big( \phi_\eps (t,x,v)-\psi(t,x)  \Big) F(v) \d v \nonumber \\
	&= \eps^{-2s} \nu_0 \int_{\RR^d} \int_0^{+\infty} e^{-\nu_0\tau} \nu_0 \Big( \psi\big(\eta(x,\eps \tau v) \big)-\psi(x)  \Big)F(v) \d \tau \d v \nonumber \\
	&=\eps^{-2s} \int_0^{+\infty} \int_{\RR^d} e^{-\nu_0 \tau} \nu_0^2 \Big( \psi\big(\eta(x, \eps w) \big) - \psi(x) \Big)  \tau^{-d} F\left( \frac{w}{ \tau} \right)\d w \d \tau \nonumber \\
	&= \eps^{-2s} \int_{\RR^d}  \Big( \psi\big(\eta(x,\eps w) \big)-\psi(x)  \Big) F_1(w)  \d v .  \label{def:LepsSR}
\end{align}
where we used the substitution $w=\tau v$ and $F_1$ is defined as 
\begin{equation} \label{eq:F1}
	F_1(w) := \int_0^{+\infty} e^{-\nu_0 \tau} \nu_0^2 \tau^{-d} F\left( \frac{w}{ \tau} \right) \d \tau.
\end{equation}
\subsubsection{Macroscopic limit, proof of Theorem \ref{thm:SR}}

To establish the fractional diffusion approximation we want to take the limit in this weak formulation. The convergence of the terms in the left-hand-side of the weak formulation follow from the convergence of $\phi_\eps$ to $\psi$ which we will show in a more general setting in Appendix \ref{subsection:CVPhieps} in order to avoid repetitions. \\
The main difficulty in passing to the limit in \eqref{eq:wfLRSR} lies therefore in the convergence of the operator $\mathcal{L}^\eps$. We introduce the set $\mathfrak{D}^s$ defined as 
\begin{equation} \label{def:D}
	\mathfrak{D}^s = \Big\{ \psi\in \mathcal{D}([0,+\infty)\times\bO) \mbox{ such that if } s\geq 1/2 \mbox{ then } \na_x \psi(t,x) \cdot n(x) = 0 \mbox{ for all } x\in\dO \Big\}
\end{equation}
and we have the following convergence result
\begin{prop} \label{prop:CVop}
	For any $\psi\in\mathfrak{D}^s$ we have
	\begin{align*}
		&\mathcal{L}^\eps [\psi]  \underset{\eps \rightarrow 0}{\longrightarrow} \L_{\SR} [\psi] \text{ strongly in } L^2((0,+\infty)\times\Omega) 
	\end{align*}
	with $\L_{\SR}$ defined in \eqref{def:LSR}.
\end{prop}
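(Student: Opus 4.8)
The plan is to realize $\mathcal{L}^\eps$ as a non-local operator whose rescaled kernel converges to that of $\L_{\SR}$. Substituting $v\mapsto\eps v$ in \eqref{def:LepsSR} gives
\[
\mathcal{L}^\eps[\psi](t,x) = \int_{\RR^d} \Big( \psi\big(\eta(x,v)\big) - \psi(x) \Big)\, \mu_\eps(v)\, \d v, \qquad \mu_\eps(v) := \eps^{-d-2s}\, F_1\!\left(\tfrac{v}{\eps}\right),
\]
and $\mu_\eps$ is radial because $F$, hence $F_1$, is. Since by \eqref{def:LSR} one has $\L_{\SR}[\psi](x) = \gamma_1\,\pv\!\int_{\RR^d}(\psi(\eta(x,v))-\psi(x))\,|v|^{-d-2s}\,\d v$, the proposition follows once we show that $\mu_\eps(v)\,\d v$ converges, in a dominated sense, to $\gamma_1\,|v|^{-d-2s}\,\d v$. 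I would isolate two ingredients: (i) the sharp far-field asymptotics of $F_1$, which pins down the constant $\gamma_1$; and (ii) a uniform-in-$\eps$ integrable majorant allowing us to pass to the limit and then upgrade the convergence to $L^2$.

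For (i) I would plug \eqref{eq:F0} into \eqref{eq:F1} and split the $\tau$-integral at $\tau=|w|$. On $(0,|w|)$ one has $|w/\tau|\ge1$, so $F(w/\tau)=\gamma\,\tau^{d+2s}|w|^{-d-2s}+O\!\big(\tau^{d+4s}|w|^{-d-4s}\big)$; using $\int_0^{\infty}e^{-\nu_0\tau}\tau^{2s}\,\d\tau=\nu_0^{-(2s+1)}\Gamma(2s+1)$ and the definition \eqref{def:gamma1} of $\gamma_1$, the main part produces exactly $\gamma_1|w|^{-d-2s}$ up to a remainder $O(|w|^{-d-4s})$, while the tail $\int_{|w|}^{\infty}$, bounded by $\|F\|_\infty\nu_0^2\int_{|w|}^\infty e^{-\nu_0\tau}\tau^{-d}\,\d\tau$, is exponentially small for $|w|\ge1$. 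This yields $\big|F_1(w)-\gamma_1|w|^{-d-2s}\big|\le C|w|^{-d-4s}$ on $\{|w|\ge1\}$. The same splitting near the origin, using only $F(w/\tau)\lesssim\tau^{d+2s}|w|^{-d-2s}$ for $\tau\le|w|$ and $F\le\|F\|_\infty$ for $\tau\ge|w|$, shows that $F_1$ is locally integrable with at worst a $|w|^{1-d}$ singularity at $0$. Translating to $\mu_\eps$: $\mu_\eps(v)\le C|v|^{-d-2s}$ for $|v|\ge\eps$, $\mu_\eps(v)\le C\,\eps^{-2s-1}|v|^{1-d}$ for every $v$, and $\mu_\eps(v)\to\gamma_1|v|^{-d-2s}$ for every fixed $v\neq0$.

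I would then split the $v$-integral at $|v|=1$. On $\{|v|\ge1\}$, since $\psi$ is bounded with $(t,x)$-support in a fixed compact set $K$, the integrand of $\mathcal{L}^\eps[\psi]-\L_{\SR}[\psi]$ is dominated by $2\|\psi\|_\infty\,C\,|v|^{-d-2s}\mathbf{1}_{\{|v|\ge1\}}\mathbf{1}_K(t,x)$ and converges pointwise to $0$; dominated convergence gives the convergence of this part in $L^1$, and by the uniform bound together with the fixed compact support, in $L^2$ as well. On $\{|v|<1\}$ I would use the radiality of the kernels to replace $\psi(\eta(x,v))-\psi(x)$ by its even-in-$v$ part $G_\psi(x,v):=\tfrac12\big(\psi(\eta(x,v))+\psi(\eta(x,-v))\big)-\psi(x)$, the odd part integrating to zero against both $\mu_\eps$ and $\gamma_1|v|^{-d-2s}$. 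The crucial point is the bound $|G_\psi(x,v)|\lesssim|v|^2$, uniformly down to $\dO$: in the half-space this is because $\psi(\eta(x,v))=\widetilde\psi(x+v)$, where $\widetilde\psi$ is the reflection of $\psi$ across $\dO$, which is $C^2$ precisely because $\na_x\psi\cdot n=0$ on $\dO$ (this is where the condition built into $\mathfrak{D}^s$ enters when $s\ge1/2$), so that $G_\psi(x,v)=\tfrac12(\widetilde\psi(x+v)+\widetilde\psi(x-v))-\widetilde\psi(x)$; for a general admissible $\Omega$ the analogous control, in the $L^2_x$-averaged form that is all we need, follows from a second-order Taylor expansion of $\psi\circ\eta$ in $v$ and the admissibility bound \eqref{eq:admissibleSobolev}. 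A direct computation with the two-regime bound on $\mu_\eps$ shows $\int_{|v|<1}|v|^2\mu_\eps(v)\,\d v\le C$ uniformly in $\eps$ (the region $|v|\le\eps$ contributing $O(\eps^{2-2s})$, the rest being $\le\int_0^1 r^{1-2s}\,\d r<\infty$ since $s<1$), and likewise $\int_{|v|<1}|v|^2\,\gamma_1|v|^{-d-2s}\,\d v<\infty$; dominated convergence then closes this part, and the resulting convergence is again in $L^2$ thanks to the fixed compact $(t,x)$-support.

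The main obstacle is this near-field estimate $|G_\psi(x,v)|\lesssim|v|^2$ uniformly up to the boundary. When $x$ lies within $|v|$ of $\dO$, the trajectory $v\mapsto\eta(x,v)$ bends under the specular reflection, and the linear-in-$v$ part of $G_\psi$ then involves $\na_x\psi$ evaluated within $|v|$ of $\dO$; without the boundary condition $\na_x\psi\cdot n=0$ one would only obtain $|G_\psi(x,v)|\lesssim|v|$, which fails to be integrable against $|v|^{-d-2s}$ as soon as $s\ge1/2$. This is exactly why $\mathfrak{D}^s$ carries that condition and why $\Omega$ is required to be admissible in the sense of \eqref{eq:admissibleSobolev}. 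Everything else -- the identification of the constant $\gamma_1$, the far-field limit, and ultimately the passage to the limit in the weak formulation \eqref{eq:wfLRSR} -- is routine once the asymptotics of $F_1$ are in hand.
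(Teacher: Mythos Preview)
Your argument is correct and follows the same overall strategy as the paper: the sharp asymptotics of $F_1$ (your item (i) is exactly the paper's Lemma~\ref{lem:F1}), a near/far split in the velocity variable, and on the near piece a second-order expansion of $\psi\circ\eta$ in $v$ together with the admissibility hypothesis \eqref{eq:admissibleSobolev}. The difference is one of scaling and style: you first substitute $v\mapsto\eps v$ so that the $\eps$-dependence sits in the rescaled kernel $\mu_\eps$, and then pass to the limit by dominated convergence, whereas the paper keeps the kernel $G=F_1-\gamma_1|\cdot|^{-d-2s}$ fixed, scales the argument of $\psi\circ\eta$ instead, and tracks explicit powers of $\eps$ ($\eps^{4-4s}$ on the near piece). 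Your far-field treatment is in fact the cleaner of the two, since dominated convergence against the fixed majorant $C|v|^{-d-2s}$ needs only the pointwise convergence $\mu_\eps(v)\to\gamma_1|v|^{-d-2s}$. One small caution on the near-field for a general admissible $\Omega$: the quadratic control on $G_\psi$ coming from \eqref{eq:admissibleSobolev} is only available in the joint $L^2_\mu(\Omega\times V)$ sense, not pointwise in $x$, so a bare dominated-convergence-in-$v$ argument is not quite enough; you flag this (``in the $L^2_x$-averaged form that is all we need'') but leave implicit the Cauchy--Schwarz weighting that closes the estimate, exactly as in the paper's handling of $I_\eps^-$.
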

\begin{proof}
	We first notice that we have
	\begin{align*}
		\L_{\SR} [\psi](x) &=  \gamma_1 \text{P.V.}\int_{\RR^d} \big[ \psi \big( \eta(x, v) \big) -\psi(x)  \big] \frac{1}{|v|^{d+2s}} \d v \\
		&=  \eps^{-2s} \text{P.V.}\int_{\RR^d}  \big[ \psi \big( \eta(x, \eps w) \big)  - \psi(x) \big] \frac{\gamma_1}{|w|^{d+2s}} \d w .
	\end{align*}
	We can then write 
	\begin{align*}
		\mathcal{L}^\eps [\psi](x) -\L_{\SR} [\psi] (x) = \eps^{-2s}\text{P.V.} \int_{\RR^d} \big[ \psi\big(\eta(x,\eps w) \big) - \psi(x) \big]  G(w) \d w 
	\end{align*}
	with $G(w) = F_1(w)-\frac{\gamma_1}{|w|^{d+2s}}$ which behaves as: \\
	\begin{lemma} \label{lem:F1}
		If $F$ satisfies \eqref{eq:F0}, then $G(w)= F_1(w)- \frac{\gamma_1}{|w|^{d+2s}}$ with $F_1$ is defined in \eqref{eq:F1} satisfies
		\begin{align}
			\mbox{for all } |w|\leq 1, \, |G(w)| \lesssim \frac{1}{|w|^{d+2s}} \quad \mbox{ and for all } |w|> 1,\,  |G(w)| \lesssim  \frac{1}{|w|^{d+4s}} \label{eq:F1largev} 
		\end{align}
	\end{lemma}
	Here and throughout the paper "$\lesssim$" means "lesser than, up to a constant". 
	\begin{proof}
		We start by noticing that 
		\[ \gamma \int_0^{+\infty} e^{-\nu_0 \tau} \nu_0^2 \tau^{2s} \d \tau = \gamma \nu_0^{1-2s} \Gamma(2s+1)= \gamma_1  \]
		hence 
		\begin{align*}
			G(w) = \int_0^{\infty} e^{-\nu_0 \tau} \nu_0^2 \bigg( \tau^{-d} F\left(\frac{w}{\tau}\right) - \frac{\gamma \tau^{2s}}{|w|^{d+2s}} \bigg) \d \tau .
		\end{align*}
		For $|w|\leq 1$, we estimate using \eqref{eq:F0} 
		\begin{align*}
			|G(w)| &\leq \int_0^{|w|} e^{-\nu_0 \tau} \nu_0^2 \tau^{-d} \bigg( F\left(\frac{w}{\tau}\right) - \frac{\gamma \tau^{d+2s}}{|w|^{d+2s}}\bigg) \d \tau \\
			&\quad + \frac{1}{|w|^{d+2s}} \int_{|w|}^{+\infty} e^{-\nu_0 \tau} \nu_0^2 \tau^{2s} \Big(  \frac{|w|^{d+2s}}{\tau^{d+2s}} F\left(\frac{w}{\tau}\right) - \gamma \Big) \d \tau\\
			&\lesssim \int_0^{|w|} e^{-\nu_0\tau} \nu_0^2 |w|^{4s} \frac{1}{|w|^{d+4s}} \d \tau \\
			&\quad + \frac{1}{|w|^{d+2s}} \int_0^{+\infty} e^{-\nu_0 \tau} \nu_0^2 \tau^{2s} \Big| \lVert F\lVert_{L^\infty} - \gamma \Big| \d \tau \\
			&\lesssim \frac{\nu_0^2}{|w|^{d-1}} + \frac{\nu_0^{1-2s} \Gamma(2s+1)}{|w|^{d+2s}} \Big| \lVert F\lVert_{L^\infty} - \gamma \Big| 
		\end{align*}
		and the control of $G$ for small velocity follows. For $|w|\geq 1$, we have using \eqref{eq:F0} 
		\begin{align*}
			|G(w)| &\lesssim \int_0^{|w|} e^{-\nu_0 \tau} \nu_0^2 \tau^{4s} \frac{1}{|w|^{d+4s}} \d \tau + ( \lVert F\lVert_{L^{\infty}} -\gamma) \int_{|w|}^{+\infty} e^{-\nu_0 \tau} \nu_0^2 \tau^{2s} \d \tau \\
			&\lesssim \frac{\nu_0^{1-4s}\Gamma(4s+1)}{|w|^{d+4s}} +  e^{-\nu_0 |v|/2} 
		\end{align*}
		which concludes the proof. 
	\end{proof}
	Back to the proof of Proposition \ref{prop:CVop}, we now split the $L^2$-norm in two as follows 
	\begin{align*}
		\int_\Omega \Big( \L^\eps[\psi](x) - \L_{\SR} [\psi](x) \Big)^2 \d x  &=\int_\Omega \bigg( \eps^{-2s} \int_{\RR^d}  \big[ \psi\big( \eta(x,\eps w) \big) - \psi(x) \big] G(w) \d w  \bigg)^2 \d x \\
		&\leq 2\int_\Omega \bigg( \eps^{-2s} \int_{|\eps w|<1}  \big[ \psi\big( \eta(x,\eps w) \big) - \psi(x) \big] G(w) \d w  \bigg)^2 \d x \\
		&\quad + 2\int_\Omega \bigg( \eps^{-2s} \int_{|\eps w|>1}  \big[ \psi\big( \eta(x,\eps w) \big) - \psi(x) \big] G(w) \d w  \bigg)^2 \d x \\\
		&:=2 I_\eps^- + 2 I_\eps^+.
	\end{align*}
	For the integral $I_\eps^+$, the convergence follows from the decay of $G$, namely we have using the Cauchy-Schwarz inequality
	\begin{align*}
		I_\eps^+ &\leq \eps^{-4s} \bigg(\int_{|\eps w|>1} G(w) \d w \bigg) \bigg(\int_{\Omega}\int_{|\eps w|>1}  \big[ \psi\big( \eta(x,\eps w) \big) - \psi(x) \big]^2 G(w) \d w \d x \bigg)
	\end{align*}
	where, using Lemma \ref{lem:F1} we have
	\[ \int_{|\eps w|>1} G(w) \d w  \lesssim \eps^{4s} .\]
	Moreover, with Fubini and Assumption 2 in Definition \ref{def:admissibleOmega} (licit since the domain $\{ |w|>1  \}$ is radially symmetric, hence stable by the change of variable), we get
	\begin{align*}
		\int_{\Omega}\int_{|\eps w|>1 }  \big[ \psi\big( \eta(x,\eps w) \big) - \psi(x) \big]^2 G(w) \d w \d x  &\lesssim \lVert \psi\lVert_{L^2(\Omega)} \int_{|\eps w|>1 } G(w) \d w \\
		&\lesssim \lVert \psi\lVert_{L^2(\Omega)} \eps^{4s}
	\end{align*}
	and the convergence $I_\eps^+ \rightarrow 0$ follows since $\psi\in L^2(\Omega)$. \\
	For the integral $I_\eps^-$, the convergence follows from the regularity of $\psi$ and $\eta$ stated in Assumption 1 of Definition \ref{def:admissibleOmega}. In particular, a direct corollary of the proof of \cite[Lemma A.3]{Cesbron18} shows that if $\psi$ belongs to $\mathfrak{D}^s$ with $s\geq 1/2$ then $D^2_v\big[\psi ( \eta )\big] \in L^2_\mu(\Omega\times V)$ for $V\subset \RR^d$ if $\mu$ is radial and $\mu(V)<+\infty$. Let us focus on the case $s\geq 1/2$ as it is the most difficult one, we will talk about the case $s<1/2$ later on. \\
	A second order Taylor expansion yields 
	\begin{align*}  
		\psi\big( \eta(x,\eps w) \big) - \psi(x)  &= -\eps w\cdot \na [\psi(\eta)](x,0)+ \int_0^1 (1-\tau) D^2_v[\psi(\eta)](x, \eps \tau w) (\eps w,\eps w) \d \tau \\
		&=  -\eps w\cdot \na\psi(x)+ \int_0^1 (1-\tau) D^2_v[\psi(\eta)](x, \eps \tau w) (\eps w,\eps w) \d \tau 
	\end{align*}
	where we notice that since $G$ is radial 
	\[ \int_{|\eps w|<1} w\cdot \na \psi(x) G(w) \d w = 0 .\]
	Hence $I_\eps^-$ can be controlled by 
	\begin{align*}
		I_\eps^- &\leq  \int_\Omega \bigg(\eps^{-2s} \int_{|\eps w|<1} \int_0^1 |\eps w|^2 \big| D^2_v [\psi(\eta)] (x,-\eps \tau w)\big| G(w) \d \tau \d w \bigg)^2 \d y \\
		&\lesssim \eps^{4-4s} \lVert D^2_v[\psi\circ\eta] \lVert^2_{L^2_{|w|^2G(w)}( \Omega\times B)} 
	\end{align*}
	where $B$ is the unit ball centred at $0$. Since $G(w)\lesssim \frac{1}{|w|^{d+2s}}$ for $|w|<1$, we have $\int_{B} |w|^2 G(w) \d w < C<\infty $ hence $I_\eps^-$ converges to $0$.
	
	To conclude this proof, we need to make some remarks about the assumptions on $\psi$. As mentioned in Assumption 1 of Definition \ref{def:admissibleOmega}, we need to assume that $\psi$ satisfies $v\cdot \na_x \psi = 0$ on $\dO$ in order to control the second derivatives in weighted $L^2$ space. However, in the definition of $\mathfrak{D}^s$ we only assume this boundary condition on $\psi$ if $s\geq1/2$. For $s< 1/2$ we do not need this assumption because we can simplify the proof of convergence of $I_\eps^-$ using a first order Taylor expansion instead of a second order due to the fact that $2-4s >0$. In fact, since $\na_v[\psi(\eta)]$ is uniformly bounded, see \cite[Lemma A.3]{Cesbron18}, we can actually conclude the proof for any $\psi\in H^1$. This difference between $s\geq 1/2$ and $s<1/2$ is due to the fact that for $s\geq 1/2$, a function $\psi \in H^s(\Omega)$ has a $L^2$-trace on $\dO$, and it plays a crucial role in the proof of uniqueness of distributional solutions in \cite{Cesbron18}. 
	
\end{proof}

With Proposition \ref{prop:CVop} and Lemma \ref{lem:CVphieps} we can take the limit in the weak formulation \eqref{eq:wfLRSR} and see that the limit $\rho$ satisfies, for all $\psi \in \mathfrak{D}^s$: 
\begin{align} \label{eq:weakLimpbSR}
	\iint_{[0,T)\times\Omega} \rho \Big( \pa_t \psi + \L_{\SR}[\psi] \Big) \d t \d x + \int_{\Omega} \rho_{in} (x)\psi(0,x) \d x = 0 . 
\end{align}
Moreover, we have proved in \cite[Theorem 1.6]{Cesbron18} that such distributional solution is unique, and it is in fact a weak solution in the sense that it satisfies \eqref{eq:weakLimpbSR} for all $\psi\in W^{1,\infty}(0,T;\HSRs(\Omega))$ and it belongs to $L^2 (0,+\infty, \HSRs(\Omega))$. This concludes the proof of Theorem \ref{thm:SR}.

\subsection{The diffuse reflections case}
We consider the rescaled Linear Boltzmann equation \eqref{eq:LinBoltzrescaled} with the diffusive boundary condition \eqref{def:DiffuseBC} in the half-space $\Omega=\RR^d_+$ with equilibrium $F$ satisfying \eqref{eq:F0} with $s>1/2$ in order for the constant $c_0$ in the diffusive boundary condition to be well defined.\\
Given a test function $\phi\in\mathcal{D}(\bO\times\RR^d)$ such that $\gamma_+ \phi = \mathcal{B_{\D}^*}[\gamma_- \phi]$ on $\Gamma_+$, where $\mathcal{B_{\D}^*}$ is given by \eqref{eq:Bstar}, the weak solution $f_\eps$ of \eqref{eq:LinBoltzrescaled}-\eqref{def:DiffuseBC} satisfies, with $Q = (0,+\infty)\times\Omega\times\RR^d $:
\begin{equation} \label{eq:wfLRdiffHS}
	\begin{aligned}
		&\iiint_{Q} f_\eps \pa_t \phi \d t \d x \d v + \iint_{\Omega\times\RR^d} \fin(x,v) \phi (0,x,v) \d x \d v \\
		&= - \eps^{-2s} \iiint_{Q} \Big[ f_\eps \Big( \eps v\cdot \na_x \phi - \nu_0 \phi \Big) + \nu_0 \rho_\eps F(v) \phi \Big] \d t \d x \d v .
	\end{aligned}
\end{equation}
We introduce the operator $A_\eps$ defined as 
\begin{align} \label{eq:AepsDiffHS} 
	A_\eps = \eps v\cdot \na_x - \nu_0 \text{Id} 
\end{align}
on the domain 
\begin{align} \label{eq:DAepsDiff}
	\mathcal{D}(A_\eps) = \lbrace \phi \in L^2_{F}(\Omega\times\RR^d): \, v\cdot \na_x \phi \in  L^2_{F}(\Omega\times\RR^d) \mbox{ and } \gamma_+ \phi = \mathcal{B_{\D}^*} [\gamma_- \phi] \mbox{ on } \Gamma_+ \rbrace .
\end{align}
We recall that the forward exit point and time $x_f$ and $\tau_f$ are defined in \eqref{def:xftf} and we will also write $\tfe (x,v) = \tau_f(x,\eps v)$ and note that $x_f (x,v) = x_f(x,\eps v)$ by definition. We then have the following proposition:
\begin{prop}\label{prop:APDiffHS}
	Given $\psi\in \mathcal{D}\big( [0,+\infty)\times \bO)$, the function $\phi_\eps:= A_\eps^{-1} [-\nu_0\psi]$ can be expressed as
	\begin{equation} \label{def:phiepsDiffHS}
		\begin{aligned}
			\phi_\eps (t,x,v) &= \int_0^{\tfe} \nu_0 e^{-\nu_0\tau} \psi(x+\eps \tau v) \d \tau \\
			&\quad + e^{-\nu_0 \tfe} c_0 \int_{w\cdot n(x_f)<0}\int_0^{+\infty} \nu_0 e^{-\nu_0\tau} \psi(x_f+\eps \tau w) F(w) |w\cdot n(x_f)| \d \tau \d w 
		\end{aligned}
	\end{equation}
	where $\tfe = \tfe(x,v)$ and $x_f=x_f(x,v)$. \\
\end{prop}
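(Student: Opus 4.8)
The plan is to verify directly that the candidate function $\phi_\eps$ given by \eqref{def:phiepsDiffHS} lies in $\mathcal{D}(A_\eps)$ and satisfies $A_\eps\phi_\eps = \nu_0\psi$, following the same strategy as in the proof of Proposition \ref{prop:solAP} but adapted to the fact that trajectories in the half-space hit the boundary at most once (at the forward exit point $x_f$) and are then redistributed according to the diffusive kernel rather than reflected. As in that proof, I would suppress the time variable $t$ throughout, since $A_\eps$ does not act on it. The argument splits naturally into two pieces: checking the boundary condition $\gamma_+\phi_\eps = \mathcal{B}_{\D}^*[\gamma_-\phi_\eps]$ on $\Gamma_+$, and checking the transport identity $\nu_0\phi_\eps - \eps v\cdot\na_x\phi_\eps = \nu_0\psi$.

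For the transport identity, I would think of $\phi_\eps$ as built from the semigroup generated by $\eps v\cdot\na_x$ along straight characteristics: for $x\in\Omega$ and $v$ such that $x+\tau v$ stays in $\Omega$ for $\tau\in(0,\tfe)$, the flow is simply translation $x\mapsto x+\eps\tau v$, and we have the key observation $v\cdot\na_x\,\psi(x+\eps\tau v) = \eps^{-1}\frac{\d}{\d\tau}\psi(x+\eps\tau v)$ and, crucially, $v\cdot\na_x \tfe(x,v) = -1$ (the exit time decreases at unit rate as one moves forward along $v$), while $v\cdot\na_x x_f(x,v)=0$ (the exit point does not change along the trajectory). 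Differentiating \eqref{def:phiepsDiffHS}: the first integral $\int_0^{\tfe}\nu_0 e^{-\nu_0\tau}\psi(x+\eps\tau v)\,\d\tau$ contributes, via the boundary term of Leibniz's rule and the $\tau$-derivative inside, exactly $\nu_0\psi(x) - \nu_0 e^{-\nu_0\tfe}\psi(x_f)$ after the integration by parts in $\tau$ (using $\psi(x+\eps\tau v)|_{\tau=0}=\psi(x)$ and $x+\eps\tfe v = x_f$), up to the $\eps v\cdot\na_x$ correction; the second term, which depends on $x$ only through $e^{-\nu_0\tfe}$ and $x_f$, contributes via $v\cdot\na_x$ acting on $e^{-\nu_0\tfe}$ a term that precisely cancels the $-\nu_0 e^{-\nu_0\tfe}\psi(x_f)$ leftover, since the bracketed double integral evaluated at $x_f$ reduces to $\psi(x_f)$ by the normalization $c_0\int_{w\cdot n<0}F(w)|w\cdot n|\,\d w = 1$ and $\int_0^\infty\nu_0 e^{-\nu_0\tau}\,\d\tau=1$ combined with $\psi(x_f+\eps\tau w)|_{\tau=0}=\psi(x_f)$. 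Assembling these pieces gives $A_\eps\phi_\eps=\nu_0\psi$.

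For the boundary condition, I would take $(x,v)\in\Gamma_-$ (so $v\cdot n(x)<0$, meaning $x+\tau v$ immediately leaves the domain); then $\tfe(x,v)=0$, the first integral vanishes, $x_f(x,v)=x$, and $\phi_\eps(x,v)$ equals exactly the double-integral term evaluated at $x$, which is manifestly independent of the incoming $v$ and has the form $c_0\int_{w\cdot n(x)<0}\big[\text{something}(w)\big]F(w)|w\cdot n(x)|\,\d w$; comparing with \eqref{eq:Bstar} and noting that for outgoing velocities $w$ (i.e. $w\cdot n(x)<0$ in the $\mathcal{B}^*$ convention) one has $\gamma_+\phi_\eps(x,w) = \int_0^{\tfe(x,w)}\cdots$, one recognizes that the bracket is precisely $\gamma_+\phi_\eps(x,w)$ restricted appropriately — so $\gamma_-\phi_\eps(x,v) = c_0\int \gamma_+\phi_\eps(x,w)F(w)|w\cdot n(x)|\,\d w = \mathcal{B}_{\D}^*[\gamma_-\phi_\eps](x,v)$, as required. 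The main obstacle I anticipate is bookkeeping the geometry of the exit map in the half-space — correctly justifying $v\cdot\na_x\tfe = -1$ and $v\cdot\na_x x_f = 0$ and handling the boundary terms in the integration by parts in $\tau$ without sign errors — together with being careful about the $\mathcal{B}_{\D}^*$ convention where the "outgoing" trace is integrated against $w\cdot n<0$; the regularity and integrability needed to differentiate under the integral sign are routine given $\psi\in\mathcal{D}([0,+\infty)\times\bO)$ and the finiteness of the first moment of $F$ (which is why $s>1/2$ is assumed).
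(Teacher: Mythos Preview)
Your overall strategy is exactly the paper's: verify directly that the formula lies in $\mathcal{D}(A_\eps)$ and satisfies $A_\eps\phi_\eps=\nu_0\psi$, using the facts $\eps v\cdot\na_x\tfe=-1$ and $v\cdot\na_x x_f=0$ together with an integration by parts in $\tau$. But two concrete points in your sketch are wrong and would cause the computation to fail as written.

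\emph{Boundary check.} You take $(x,v)\in\Gamma_-$ and claim $\tfe(x,v)=0$. In the paper's convention $n$ is the \emph{outward} normal, so $v\cdot n<0$ means $v$ points into $\Omega$ and hence $\tfe(x,v)=+\infty$ in the half-space, not $0$. The correct argument (which is what the paper does) is the mirror image: for $(x,v)\in\Gamma_+$ one has $\tfe=0$ and $x_f=x$, so $\gamma_+\phi_\eps(x,v)$ equals the double integral; and for $(x,w)\in\Gamma_-$ one has $\tfe(x,w)=+\infty$, so $\gamma_-\phi_\eps(x,w)=\int_0^{+\infty}\nu_0 e^{-\nu_0\tau}\psi(x+\eps\tau w)\,\d\tau$ is precisely the inner bracket. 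Hence $\gamma_+\phi_\eps=\mathcal{B}_{\D}^*[\gamma_-\phi_\eps]$. Your version has $\gamma_+$ and $\gamma_-$ swapped throughout.

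\emph{Transport identity.} Your claim that ``the bracketed double integral evaluated at $x_f$ reduces to $\psi(x_f)$ by the normalization'' is false: the bracket is $c_0\int_{w\cdot n<0}\int_0^{+\infty}\nu_0 e^{-\nu_0\tau}\psi(x_f+\eps\tau w)F(w)|w\cdot n|\,\d\tau\,\d w$, which equals $\psi(x_f)$ only when $\psi$ is constant. The actual mechanism is different. Write $\phi_\eps=I_1+I_2$ for the two terms in \eqref{def:phiepsDiffHS}. Since $\eps v\cdot\na_x\tfe=-1$ and $v\cdot\na_x x_f=0$, one has $\eps v\cdot\na_x I_2=\nu_0 e^{-\nu_0\tfe}\,[\text{bracket}]=\nu_0 I_2$, so $A_\eps I_2=0$: the second term is annihilated, not turned into $\nu_0 e^{-\nu_0\tfe}\psi(x_f)$. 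Meanwhile, for $I_1$ the Leibniz boundary term $-\nu_0 e^{-\nu_0\tfe}\psi(x_f)$ is cancelled by the $\tau=\tfe$ boundary term $+\nu_0 e^{-\nu_0\tfe}\psi(x_f)$ coming from the integration by parts \emph{within} $I_1$ itself, leaving $\eps v\cdot\na_x I_1=-\nu_0\psi(x)+\nu_0 I_1$ and hence $A_\eps I_1=\nu_0\psi(x)$. The paper phrases this as computing $\eps v\cdot\na_x\phi_\eps$ directly and recognizing $-\nu_0\psi+\nu_0\phi_\eps$; either way, no collapse of the bracket to $\psi(x_f)$ occurs or is needed.
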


\begin{proof}[Proof of Proposition \ref{prop:APDiffHS}]
	The fact that $\phi_\eps$ satisfies the boundary condition of $\mathcal{D}(A_\eps)$ is rather straightforward since for all $(x,v)\in\Gamma_+$ we have $\tfe(x,v) = 0$ hence 
	\begin{align*}
		\gamma_+ \phi_\eps(t,x,v) &= c_0 \int_{w\cdot n(x)<0}\int_0^{+\infty} \nu_0 e^{-\nu_0 \tau} \psi(x+\eps \tau w) F(w) |w\cdot n(x)| \d \tau \d w \\
		&= c_0 \int_{w\cdot n(x)<0} \gamma_- \phi_\eps(x,w)  F(w) |w\cdot n(x)| \d w\\
		&= \BDs [\gamma_- \phi_\eps] 
	\end{align*}
	using the fact that, since $\Omega$ is half space, for any $(x,w)\in\Gamma_-$ we have $\tfe(x,w) = +\infty$ hence 
	\begin{align*} 
		\gamma_- \phi_\eps(x,w) = \int_0^{+\infty} \nu_0 e^{-\nu_0 \tau} \psi(x+\eps \tau w) \d \tau .
	\end{align*}
	Now, let us recall that $v\cdot \na_x \tau_f (x,v) = -1$ and $v\cdot\na_x x_f(x,v) = 0$, proofs of which can be found in \cite{Guo10} in the case of backwards exit time and point (obviously equivalent to the forward ones through the substitution $v\to -v$). Let us now check that $A_\eps \phi_\eps = -\nu_0\psi$ by computing the following: for $(x,v)\in\bO\times\RR^d$:
	\begin{align*}
		\eps v\cdot \na_x \phi_\eps (x,v) &= - \nu_0 e^{-\nu_0 \tfe} \psi(x_f) + \int_0^{\tfe} \nu_0 e^{-\nu_0 \tau} \eps v\cdot \na_x \psi(x+\eps \tau v) \d \tau \\
		&\quad + \nu_0 e^{-\nu_0 \tfe} c_0 \int_{w\cdot n(x_f)<0}\int_0^{+\infty} \nu_0 e^{-\nu_0\tau} \psi(x_f+\eps \tau w) F(w) |w\cdot n(x_f)| \d \tau \d w 
	\end{align*}
	where 
	$$\eps v\cdot \na_x \psi(x+\eps \tau v ) = \frac{\d }{\d \tau} \big[ \psi(x+\eps \tau v) \big]$$
	hence integration by parts yields
	\begin{align*}
		\eps v\cdot \na_x \phi_\eps (x,v) &= - \nu_0 e^{-\nu_0 \tfe} \psi(x_f) + \int_0^{\tfe} \nu_0^2  e^{-\nu_0 \tau} \psi(x+\eps \tau v) \d \tau + \nu_0 e^{-\nu_0 \tfe} \psi(x_f) -\nu_0  \psi(x)  \\
		&\quad + \nu_0 e^{-\nu_0 \tfe} c_0 \int_{w\cdot n(x_f)<0}\int_0^{+\infty} \nu_0 e^{-\nu_0\tau} \psi(x_f+\eps \tau w) F(w) |w\cdot n(x_f)| \d \tau \d w \\
		&= -\nu_0 \psi(x) + \nu_0 \phi_\eps (x,v) 
	\end{align*}
	which concludes the proof. 
\end{proof}
The weak formulation \eqref{eq:wfLRdiffHS} with test function $\phi_\eps=A_\eps^{-1}[-\nu_0\psi]$ expressed in the previous proposition then reads
\begin{equation} \label{eq:wfLRDiff}
	\begin{aligned} 
		\iiint_{Q} f_\eps \pa_t \phi_\eps \d t \d x \d v &+ \iint_{\Omega\times\RR^d} \fin(x,v) \phi_\eps (0,x,v) \d x \d v  \\
		&\quad =  - \iiint_{Q} \eps^{-2s} \rho_\eps \nu_0 (\phi_\eps - \psi) F(v) \d t \d x \d v .
	\end{aligned}
\end{equation}
Using the definition of $\phi_\eps$, we can write the following:
\begin{align*}
	&\int_{\RR^d} \nu_0(\phi_\eps - \psi)  F(v)  \d v \\
	&\quad =  \int_{\RR^d} \bigg[ \int_0^{\tfe} \nu_0 e^{-\nu_0 \tau} \Big( \psi(x+\eps \tau v) - \psi(x) \Big) \d \tau + e^{-\nu_0 \tfe} \Big( \psi(x_f) - \psi(x)\Big) \\
	&\quad +e^{-\nu_0 \tfe} c_0 \int_{w\cdot n<0}\int_0^{+\infty} \nu_0 e^{-\nu_0\tau} \Big( \psi(x_f+\eps \tau w)-\psi (x_f) \Big) F(w) |w\cdot n| \d \tau \d w \bigg] \nu_0 F(v) \d v .
\end{align*}
We decompose this expression in three parts. First, we define the operator $\L_\eps$ as:
\begin{align} 
	\L_\eps [\psi] (x):= & \eps^{-2s}\int_{\RR^d} \int_0^{\tfe} \nu_0^2 e^{-\nu_0 \tau} \Big( \psi(x+\eps \tau v) - \psi(x) \Big) \d \tau F(v) \d v  \nonumber\\
	&= \eps^{-2s}\int_0^{+\infty} \int_{x+\eps \tau v \in \Omega } \nu_0^2 e^{-\nu_0 \tau} \Big( \psi(x+\eps \tau v) - \psi(x) \Big)  F(v) \d v \d \tau  \nonumber\\
	& = \eps^{-2s}\int_0^{+\infty}  \int_{  x+\eps z\in\Omega} \nu_0^2 e^{-\nu_0 \tau} \Big( \psi(x+\eps z) - \psi(x) \Big) \tau^{-d} F\left(\frac{z}{\tau}\right) \d v \d \tau \nonumber \\
	& = \eps^{-2s} \int_{ x+\eps z\in\Omega} \Big( \psi(x+\eps z) - \psi(x) \Big) F_1(z) \d z \label{def:LepsDiffHS} 
\end{align}
with $F_1$ defined in \eqref{eq:F1}. Second, we define $\kappa_\eps$ as
\begin{align}
	\kappa_\eps[\psi](x) = \int_{\RR^d} \nu_0 e^{-\nu_0 \tfe} \Big(  \psi(x) - \psi(x_f) \Big) F(v) \d v . \label{def:kappaepsDiffHS}
\end{align}
And finally, we notice that
\begin{align*}
	& \int_{w\cdot n<0}\int_0^{+\infty} \nu_0 e^{-\nu_0\tau} \Big( \psi(x_f+\eps \tau w)-\psi (x_f) \Big)  F(w)  |w\cdot n| \d \tau \d w  \\
	&= \int_0^{+\infty} \int_{z\cdot n<0} \Big( \psi(x_f+\eps z)-\psi (x_f) \Big)  \nu_0 e^{-\nu_0\tau} \tau^{-d-1}  F\left(\frac{z}{\tau}\right) |z\cdot n|  \d z  \d \tau  \\
	&= \int_{z\cdot n<0} \Big( \psi(x_f+\eps z)-\psi (x_f) \Big) F_0(z) |z\cdot n|  \d z \label{def:DepsDiffHS}
\end{align*}
where
\begin{equation} \label{def:F0DiffHS}
	F_0 (z) = \int_0^{+\infty} \nu_0 e^{-\nu_0\tau} \tau^{-d-1} F\left(\frac{z}{\tau}\right)\d \tau.
\end{equation}
We introduce the substitution $\mathcal{P}_\eps$ defined as
\begin{equation} \label{eq:CdVPeps}
	\mathcal{P}_\eps : (x,v) \in\Omega\times\RR^d \mapsto (\tau,y,w)= \big(\tfe(x,v),x_f(x,v),-v\big)\in \RR_+\times \Gamma_-
\end{equation}
for which we have, using the expressions of $\na x_f$ and $\na \tau_f$ deduced from \cite{Guo10}, $|\det \na \mathcal{P}_\eps^{-1} |= \eps |v\cdot n(y)|$ and $x$ becomes $x=y+\eps \tau w$. We get
\begin{align*}
	&\int_\Omega \rho_\eps(x) \eps^{-2s} c_0 \int_{\RR^d} \nu_0e^{-\nu_0 \tfe} F(v)  \int_{z\cdot n<0} \Big( \psi(x_f+\eps z)-\psi (x_f) \Big) F_0(z) |z\cdot n|  \d z\d v \d x \\
	&=  \int_{\dO} \bigg( c_0 \int_{w\cdot n<0} \int_0^{+\infty} \rho_\eps(y + \eps \tau w)\nu_0 e^{-\nu_0 \tau } F(w) |w\cdot n| \d \tau \d w \bigg) \\
	&\quad \times \bigg( \eps^{1-2s}  \int_{z\cdot n<0} \Big( \psi(y+\eps z)-\psi (y) \Big) F_0(z) |z\cdot n(y)|  \d z \bigg) \d \sigma(y) \\
	&= - \int_{\dO} A_\eps^{-1}[-\nu_0\rho_\eps](y,\cdot) \Deps [\psi](y) \cdot n(y) \d \sigma(y) 
\end{align*}
recognising
\begin{align*}
	c_0 \int_{w\cdot n<0} \int_0^{+\infty} \rho_\eps(y + \eps \tau w)\nu_0 e^{-\nu_0 \tau } F(w) |w\cdot n| \d \tau \d w &= \mathcal{B_{\D}^*} \left[ A_\eps^{-1} [-\nu_0 \rho_\eps] \right] \\
	&= A_\eps^{-1} [-\nu_0 \rho_\eps]
\end{align*}
on $\Gamma_-$ by construction. The operator $\Deps$ was introduced in \cite{CesbronMelletPuel} as 
\[ \Deps [\psi](y) = \eps^{1-2s} \int_{z\cdot n<0} \Big( \psi(y+\eps z)-\psi (y) \Big) F_0(z) z  \d z .\]
Note that $A_\eps^{-1} [-\nu_0\rho_\eps](y,\cdot ) = A_\eps^{-1} [-\nu_0\rho_\eps](y,v ) $ for all $(y,v) \in \Gamma_-$, independent of $v$ by definition of the operator $\BDs$.\\
Altogether, the weak formulation of \eqref{eq:LinBoltzrescaled}-\eqref{def:DiffuseBC} becomes
\begin{equation} \label{eq:wfLRdiffHS2}
	\begin{aligned}
		&\iiint_{Q} f_\eps \pa_t \phi_\eps\d t \d x \d v -\iint_{\Omega\times\RR^d} f_{in} \phi_\eps(0,x,v) \d x \d v \\
		&\quad =-  \iint_{\RR_+\times\Omega}\rho_\eps \Big( \L_\eps[\psi](x) - \kappa_\eps[\psi(x)] \Big)  \d t \d x \\
		&\qquad +\int_{\RR_+\times\dO} A_\eps^{-1}[-\nu_0\rho_\eps](y,\cdot) \Deps [\psi](y) \cdot n(y) \d \sigma(y) \d t .
	\end{aligned}
\end{equation}

\begin{remark} \label{rmk:Leps}
	We notice that $\nu_0 e^{-\nu_0 \tfe} = \int_{\tfe}^{+\infty} \nu_0^2e^{-\nu_0 \tau} \d \tau$ hence we see that we recover here the operator $\L^\eps$ defined in \cite{CesbronMelletPuel}. Indeed, if we defined an extension $\widetilde{\psi}(x+\eps v,v) = \psi(x+\eps v)$ if $x+\eps v\in\Omega $ and $\widetilde{\psi}(x+\eps v,v) = \psi(x_f(x,v))$ is $x+\eps v \notin \Omega$ then we have 
	\begin{align*}
		\L_\eps[\psi](x) - \kappa_\eps[\psi](x) = \L^\eps [\psi](x) = \eps^{-2s} \int_{\RR^d} \Big( \widetilde{\psi} (x+\eps v,v) - \psi(x) \Big) F_1(v) \d v. 
	\end{align*}
\end{remark}

\subsubsection{Macroscopic limit, proof of Theorem \ref{thm:Diff}} 

We now wish to take the limit in \eqref{eq:wfLRdiffHS2} as $\eps$ goes to $0$. We will prove the convergence of the terms on the left-hand-side in a more general setting in Appendix \ref{subsection:CVPhieps} so we focus now on the terms on the right-hand-side. 

\begin{prop} \label{prop:CvLepsdiff}
	For all $\psi \in L^\infty(0,T; H^2(\Omega))$ such that $\L \psi \in L^2 (\RR_+\times\Omega)$ we have 
	\begin{align}
		&\L_\eps [\psi](t,x) \rightarrow -\gamma_{d,s}\Delsreg [\psi](t,x) &\quad \mbox{in } L^2((0,T)\times\Omega)\mbox{-strong for all } T>0.  \label{eq:CvLepsdiff} \\
		& \kappa_\eps[\psi](t,x) \rightarrow \kappa[\psi](t,x) &\quad \mbox{in } L^2((0,T)\times\Omega)\mbox{-strong for all } T>0.  \label{eq:CVkappadiff}
	\end{align}
\end{prop}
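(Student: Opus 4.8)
## Proof proposal

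\textbf{Overall strategy.} The plan is to prove the two convergences \eqref{eq:CvLepsdiff} and \eqref{eq:CVkappadiff} separately. For the first one I would exploit the expression \eqref{def:LepsDiffHS} of $\L_\eps$ as a kernel operator against $F_1$, and compare it with the regional fractional Laplacian $\Delsreg$ by splitting off the singular kernel $\gamma_1/|z|^{d+2s}$ using Lemma \ref{lem:F1}, exactly as in the specular case (Proposition \ref{prop:CVop}) but now with the domain of integration restricted to $\{x+\eps z\in\Omega\}$ rather than all of $\RR^d$. For the second one I would work with the expression \eqref{def:kappaepsDiffHS} of $\kappa_\eps$, change variables so that the integral runs over exit points $y=x_f(x,v)\in\dO$, and identify the limit with the boundary integral \eqref{def:kappalim2} by a scaling argument together with the explicit half-space geometry.

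\textbf{Step 1: convergence of $\L_\eps$.} Using \eqref{def:LepsDiffHS} I would write
\begin{align*}
\L_\eps[\psi](x) = \eps^{-2s}\int_{x+\eps z\in\Omega}\big(\psi(x+\eps z)-\psi(x)\big)\Big(\tfrac{\gamma_1}{|z|^{d+2s}}+G(z)\Big)\d z
\end{align*}
with $G=F_1-\gamma_1/|\cdot|^{d+2s}$. The term carrying $G$ is handled verbatim as the terms $I_\eps^\pm$ in the proof of Proposition \ref{prop:CVop}: a second-order Taylor expansion of $\psi\in H^2(\Omega)$ on $|z|<1$ (the linear term vanishes by radial symmetry of $G$, which is licit since $\{x+\eps z\in\Omega\}$ is not radial — here one must instead use $|\psi(x+\eps z)-\psi(x)|\le \eps|z|\,\|\na\psi\|_{L^\infty}$ together with $\int_{|z|<1}|z|G(z)\d z<\infty$, noting $G(z)\le C|z|^{-(d+2s)}$ so $|z|G(z)\le C|z|^{1-d-2s}$ is integrable near $0$ because $2s<2$), and the decay $|G(z)|\le C|z|^{-(d+4s)}$ for $|z|>1$ for the far part. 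For the principal part, $\eps^{-2s}\int_{x+\eps z\in\Omega}\big(\psi(x+\eps z)-\psi(x)\big)\frac{\gamma_1}{|z|^{d+2s}}\d z$, I would substitute $y=x+\eps z$, turning it into $\gamma_1\int_\Omega \frac{\psi(y)-\psi(x)}{|x-y|^{d+2s}}\d y$ — which is exactly $-\gamma_1\, c_{d,s}^{-1}\cdot c_{d,s}\,\Delsreg[\psi](x)=-\gamma_{d,s}\Delsreg[\psi](x)$ by \eqref{def:regionalDels} and $\gamma_{d,s}=\gamma_1/c_{d,s}$ — so this term is in fact independent of $\eps$ once the principal value is correctly interpreted. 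The careful point is the principal-value cancellation of the odd-in-$z$ part near the singularity: here I would symmetrise the integrand by pairing $z$ with $-z$ on the set where both $x\pm\eps z\in\Omega$, pick up the error on the thin layer near $\dO$ (controlled using $\psi\in H^2$ and the $L^2(\Omega)$-boundedness of $\Delsreg[\psi]$), and conclude strong $L^2$ convergence by dominated convergence in $x$ and $t$.

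\textbf{Step 2: convergence of $\kappa_\eps$.} Starting from \eqref{def:kappaepsDiffHS}, I would use $\nu_0 e^{-\nu_0\tfe}=\int_{\tfe}^{+\infty}\nu_0^2 e^{-\nu_0\tau}\d\tau$ together with the scaling $x_f(x,v)=x_f(x,\eps v)$ and the change of variable $w=\tau v$ to rewrite $\kappa_\eps$ as an integral over $\{x+\eps z\notin\Omega\}$ against $F_1$, mirroring Remark \ref{rmk:Leps}:
\begin{align*}
\kappa_\eps[\psi](x) = \eps^{-2s}\int_{x+\eps z\notin\Omega}\big(\psi(x_f(x,z))-\psi(x)\big)F_1(z)\d z.
\end{align*}
Splitting $F_1=\gamma_1/|z|^{d+2s}+G$ again, the $G$-part is negligible by the decay estimates of Lemma \ref{lem:F1} (the exit set $\{x+\eps z\notin\Omega\}$ for $x$ a fixed positive distance from $\dO$ forces $|z|\gtrsim 1/\eps$, so this contributes $O(\eps^{2s})\to 0$ in $L^2$; near the boundary one uses the crude bound $|\psi(x_f)-\psi(x)|\le 2\|\psi\|_{L^\infty}$ and integrability). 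For the principal part, substituting $z=w/\tau$ back and recognising the structure, the limit is
\begin{align*}
\eps^{-2s}\int_{x+\eps z\notin\Omega}\big(\psi(x_f(x,z))-\psi(x)\big)\frac{\gamma_1}{|z|^{d+2s}}\d z \;\longrightarrow\; \pv\!\!\int_{x+v\notin\Omega}\!\!\big(\psi(x_f(x,v))-\psi(x)\big)\frac{\gamma_1}{|v|^{d+2s}}\d v,
\end{align*}
again $\eps$-independent after the scaling $v\mapsto \eps v$, which is $\kappa[\psi](x)$ by \eqref{def:kappalim}; the equivalent boundary form \eqref{def:kappalim2} follows from the integration-by-parts identity already recorded after \eqref{def:kappalim}. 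Convergence in $L^2((0,T)\times\Omega)$ then follows by dominated convergence, using the half-space formula \eqref{def:etaHS} for $x_f$ (so $\psi(x_f)-\psi(x)$ is Lipschitz in $x$ with the constant of $\psi$, giving an $\eps$-uniform $L^2$ bound).

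\textbf{Main obstacle.} The delicate part is Step 1, specifically the rigorous handling of the principal value of the singular kernel on the \emph{truncated} domain $\{x+\eps z\in\Omega\}$: unlike in the specular case the domain of integration is not radially symmetric, so one cannot simply discard the odd linear term, and one must instead control the boundary layer contribution $\{x : \operatorname{dist}(x,\dO)\lesssim\eps\}$ carefully — this is where the hypothesis $\L\psi=\L_{\D}[\psi]\in L^2(\RR_+\times\Omega)$ (as opposed to merely $\psi\in H^2$) is genuinely used, exactly as flagged in Remark 1 of the introduction. A secondary technical point is ensuring the decay estimates of Lemma \ref{lem:F1} are strong enough near $|z|=\infty$ to absorb the $\eps^{-2s}$ prefactor in both the $\L_\eps$ far-field term and the entire $\kappa_\eps$ term; this needs $4s>2s$, i.e. is automatic, but the boundary-layer bookkeeping must be done with the correct weights.
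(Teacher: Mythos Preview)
Your overall decomposition matches the paper's: write $F_1=\gamma_1|z|^{-(d+2s)}+G$, observe that after the substitution $y=x+\eps z$ the principal part is \emph{exactly} $-\gamma_{d,s}\Delsreg[\psi]$ (so is $\eps$-independent), and reduce everything to showing that the $G$-remainder
\[
R_\eps(x)=\eps^{-2s}\int_{x+\eps z\in\Omega}\big(\psi(x+\eps z)-\psi(x)\big)G(z)\,\d z
\]
tends to $0$ in $L^2$. The gap is in your treatment of $R_\eps$ for small $|z|$. Your claim that ``$|z|G(z)\le C|z|^{1-d-2s}$ is integrable near $0$ because $2s<2$'' is false: in $\RR^d$ one needs $1-2s>-1+1=0$, i.e.\ $s<1/2$, but the diffusive case forces $s>1/2$. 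Worse, even if the integral were finite, the Lipschitz bound gives a prefactor $\eps^{1-2s}\to\infty$. So a first-order argument cannot work here, and your Step 1 as written does not close.

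What the paper does---and what you are missing---is to run the second-order Taylor expansion and then exploit the \emph{partial} symmetry of the half-space: the domain $\{x+\eps z\in\Omega,\ |\eps^\alpha z|<1\}$ is invariant under the tangential reflection $z=(z',z_d)\mapsto(-z',z_d)$, so the tangential components of the linear term $\eps z\cdot\na\psi(x)$ cancel against $G$, and only the normal contribution $\eps z_d\,\pa_n\psi(x)$ survives. Moreover, by the odd symmetry $z_d\mapsto -z_d$ this normal term vanishes unless $x_d<\eps^{1-\alpha}$, and on that boundary strip it is bounded by $C\eps^{2s}x_d^{1-2s}\,\pa_n\psi(x)$. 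The hypothesis $\L_{\D}\psi\in L^2$ enters precisely here: by \cite[Proposition~3.3]{CesbronMelletPuel} it implies $\int_\Omega |x_d^{1-2s}\pa_n\psi|^2\,\d x<\infty$, which is exactly the weighted Hardy-type bound needed to absorb this term. Your ``Main obstacle'' paragraph correctly flags that the hypothesis is used for the boundary layer, but you never identify this mechanism, and your actual Step~1 bypasses it with an argument that fails.

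A smaller remark: the paper does not prove \eqref{eq:CVkappadiff} directly. It cites \cite[Proposition~2.2]{CesbronMelletPuel} for the convergence of the \emph{sum} $\L_\eps+\kappa_\eps$ (cf.\ Remark~\ref{rmk:Leps}) and then deduces \eqref{eq:CVkappadiff} from \eqref{eq:CvLepsdiff}. Your direct attack on $\kappa_\eps$ would run into the same boundary-layer issue as above and would need the same fix.
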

\begin{proof}
	As mentioned in Remark \ref{rmk:Leps} above, the complete operator $\L_\eps+\kappa_\eps$ was already studied in \cite{CesbronMelletPuel}. In particular, it was proved in \cite[Proposition 2.2]{CesbronMelletPuel} that for all  $T>0$
	\begin{align*}
		\L_\eps [\psi](t,x) - \kappa_\eps[\psi](t,x) \rightarrow -\gamma_{d,s}\Delsreg [\psi](t,x) - \kappa[\psi](t,x) &\quad \mbox{in } L^2((0,T)\times\Omega)\mbox{-strong}
	\end{align*}
	so we will only show \eqref{eq:CvLepsdiff} to prove Proposition \ref{prop:CvLepsdiff}.   \\
	Let us first notice that we can write $\Delsreg[\psi]$ as
	\begin{align*}
		-\gamma_{d,s}\Delsreg [\psi](x) &= \gamma_1 \pv \int_{x+v\in\Omega} \frac{ \psi(x+v) - \psi(x) }{|v|^{d+2s}} \d v \\
		&= \eps^{-2s} \pv \int_{x+\eps v\in\Omega} \Big(\psi(x+\eps v) - \psi(x) \Big) \frac{\gamma_1}{|v|^{d+2s}} \d v .
	\end{align*}
	We can then write (omitting the $t$ variable for the sake of clarity)
	\begin{align*}
		&\int_\Omega \Big( \L_\eps[\psi] (x) + \gamma_{d,s} \Delsreg [\psi](x) \Big)^2 \d x \\
		&\quad = \int_\Omega \bigg(\eps^{-2s} \int_{x+\eps v\in\Omega}  \Big(\psi(x+\eps v) - \psi(x) \Big) G(v) \d v \bigg)^2 \d x \\
		&\quad \leq 2 \int_\Omega \bigg(\eps^{-2s} \int_{x+\eps v\in\Omega,\, |\eps^\beta v|<1}  \Big(\psi(x+\eps v) - \psi(x) \Big) G(v) \d v \bigg)^2 \d x \\
		&\qquad + 2 \int_\Omega \bigg(\eps^{-2s} \int_{x+\eps v\in\Omega,\, |\eps^\beta v|>1}  \Big(\psi(x+\eps v) - \psi(x) \Big) G(v) \d v \bigg)^2 \d x\\
		&\quad := 2 I_\eps^- + 2 I_\eps^+ 
	\end{align*}
	for some $\beta\in(0,1)$ to be chosen later on, and with $G(v)=F_1(v)- \frac{\gamma_1}{|v|^{d+2s}}$ whose behaviour is given by Lemma \ref{lem:F1}.  The convergence of $I_\eps^+$ follows from the decay of $G$: 
	\begin{align*}
		I_\eps^+ \leq \eps^{-4s}\bigg( \iint_{x+\eps v\in\Omega,\, |\eps^\beta v|>1}  \Big(\psi(x+\eps v) - \psi(x) \Big)^2 G(v) \d v \d x \bigg) \bigg(\int_{|\eps^\beta v|>1} G(v) \d v \bigg) 
	\end{align*}
	where on the one hand
	\begin{align*}
		\int_{|\eps^\beta v|>1} G(v) \d v \lesssim \eps^{4s\beta} 
	\end{align*}
	and on the other, with the substitution $x\to y=x+\eps v \in \Omega$ 
	\begin{align*}
		\iint_{x+\eps v\in\Omega,\, |\eps^\beta v|>1}  \Big(\psi(x+\eps v) - \psi(x) \Big)^2 G(v) \d v \d x &\lesssim \lVert \psi\lVert_{L^2(\Omega)}^2 \int_{|\eps^\beta v|>1} G(v) \d v \\
		&\lesssim \eps^{4s\beta} \lVert \psi\lVert_{L^2(\Omega)}^2
	\end{align*}
	hence $I_\eps^+ \leq C \eps^{4s\beta} \lVert \psi\lVert_{L^2(\Omega)}$. The convergence of $I_\eps^{-}$ follows from the regularity of $\psi$. A second order Taylor expansion reads
	\begin{align*}
		\psi(x+\eps v) - \psi(x) = \eps v\cdot \na_x \psi(x) + \int_0^1 (1-\tau) D^2_x\psi(x+\eps \tau v)(\eps v,\eps v) \d \tau .
	\end{align*} 
	For the first order term, we notice that since $\Omega$ is a half-space, for any fixed $x$ the set $\lbrace v:\, x+\eps v\in\Omega,\, |\eps^\alpha v|<1\rbrace$ is invariant by the substitution $v=(v',v_d) \to \tilde{v}=(-v',v_d)$ hence, by radial symmetry of $G$, 
	\begin{align*}
		\pv \int_{x+\eps v\in\Omega,\, |\eps^\beta v|<1} v_i \na_x \psi(x) G(v) \d v = 0 \quad \mbox{ for all } i=1,\dots, d-1.
	\end{align*}
	As a result, we have 
	\begin{align*}
		\pv \int_{x+\eps v\in\Omega,\, |\eps^\beta v|<1} \eps v\cdot \na_x\psi(x) G(v) \d v  = \pv \int_{x+\eps v\in\Omega,\, |\eps^\beta v|<1} \eps v_d \pa_n \psi(x) G(v) \d v 
	\end{align*}
	where $\pa_n$ is the normal derivative at the boundary. Moreover, if $\eps v_d<x_d$ then both $x+\eps v\in\Omega$ and $x-\eps v\in\Omega$ so again the integral cancels thanks to the symmetry of $G$, we are left with 
	\begin{align*}
		&\pv \int_{x+\eps v\in\Omega,\, |\eps^\beta v|<1} \eps v\cdot \na_x\psi(x) G(v) \d v \lesssim \mathds{1}_{\{ x_d<\eps^{1-\beta}\}} \pa_n \psi(x) \eps \int_{ \eps^{-1}x_d < v_d <\eps^{-\beta}} v_d G(v) \d v  \\
		&\lesssim \mathds{1}_{\{ x_d<\eps^{1-\beta}\}} \pa_n \psi(x) \eps \bigg( \int_{\eps^{-1}x_d<v_d<1} v_d G(v) \d v + \int_{1<v_d<\eps^{-\beta}} v_d G(v) \d v  \bigg)\\
		&\lesssim \mathds{1}_{\{ x_d<\eps^{1-\beta}\}} \pa_n \psi(x) \eps \big( \eps^{2s-1} x_d^{1-2s} + \eps^{4s \beta} \big) \\
		&\lesssim \eps^{2s}  \mathds{1}_{\{ x_d<\eps^{1-\beta}\}} x_d^{1-2s} \pa_n \psi(x)
	\end{align*}
	for any $\beta>1/2$, where we used Lemma \ref{lem:F1} to estimates the integrals. This yields, for $I_\eps^-$: 
	\begin{align*}
		I_\eps^{-} &\lesssim \int_\Omega \bigg( \mathds{1}_{\{ x_d<\eps^{1-\beta}\}}  x_d^{1-2s} \pa_n \psi(x) \\
		&\quad + \eps^{-2s} \int_{x+\eps v\in\Omega, \, |\eps^\beta v|<1 } |\eps v|^2 \int_0^1 \big| D^2_x\psi(x+\eps \tau v)\big|^2 G(v) \d v \bigg)^2 \d x\\
		&\lesssim \eps^{1-\beta}  \int_\Omega \big| x_d^{1-2s}  \pa_n \psi \big|^2 \d x + \eps^{4-4s} \lVert D^2_x\psi \lVert^2_{L^2(\Omega)} \bigg( \int_{|\eps^{\beta}v|<1} |v|^2G(v) \d v \bigg)^2
	\end{align*}
	which goes to $0$ when $\eps$ goes to $0$ for all $\beta<1$ because it was proved in \cite[Proposition 3.3]{CesbronMelletPuel} that the assumption $\L [\psi]\in L^2(\Omega)$ implies 
	\[ \int_\Omega  \big| x_d^{1-2s}  \pa_n \psi \big|^2 \d x  < +\infty .\]
	Choosing $\beta \in (1/2,1)$ concludes the proof. 
\end{proof}
\begin{remark}
	Note that this proof of \eqref{eq:CvLepsdiff} is actually rather similar to the one of \cite[Proposition 2.2]{CesbronMelletPuel}, the main difference is that we do not use here the extension $\widetilde{\psi}$ defined in Remark \ref{rmk:Leps}. One could also prove \eqref{eq:CVkappadiff} directly without using the extension, the arguments of this proof would be analogous to the proof of \eqref{eq:CvLepsdiff} which is why we chose not to write it here as to avoid unnecessary repetitions. 
\end{remark}

\begin{prop} \label{prop:CvDepsDiffHS}
	For all $\psi$ such that $\L [\psi] \in L^2(\Omega)$ and $\Dlim [\psi](y)\cdot n(y) = 0$ on $\dO$
	\begin{equation}
		\lim_{\eps\to 0} \eps^{-1} \int_{\dO} \Big| \Deps [\psi](y)\cdot n(y) \Big|^2 \d \sigma(y)  = 0 
	\end{equation}
\end{prop}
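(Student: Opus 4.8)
The plan is to show that the boundary quantity $\eps^{-1}\int_{\dO}|\Deps[\psi](y)\cdot n(y)|^2\d\sigma(y)$ vanishes in the limit by first identifying the $\eps\to0$ limit of $\eps^{-1}\Deps[\psi]$ pointwise, using the decay of the kernel $F_0$ from \eqref{def:F0DiffHS}, and then observing that this limit is exactly $\gamma_0\Dlim[\psi]$, which has vanishing normal component on $\dO$ by hypothesis. Recalling
$$\Deps[\psi](y) = \eps^{1-2s} c_0 \int_{w\cdot n<0}\Big(\psi(y+\eps w) - \psi(y)\Big) F_0(w)|w\cdot n(y)|\d w,$$
I first rescale $w\mapsto\eps w$ is the wrong direction; rather, for the comparison I write the candidate limit in the same rescaled variables: since $F_0(w)$ behaves like $\gamma\nu_0^{1-2s}\Gamma(2s)|w|^{-d-2s}\cdot|w\cdot n|^{-1}$-type scaling — more precisely, using \eqref{eq:F0} one gets $\int_0^\infty\nu_0 e^{-\nu_0\tau}\tau^{2s}\d\tau = \nu_0^{-2s}\Gamma(2s+1)$ so that $|w|^{d+2s}F_0(w)\to\gamma_0/(2s)\cdot(\ldots)$ — the key estimate is an analogue of Lemma \ref{lem:F1}: there is $\tilde\gamma>0$ with $G_0(w):=F_0(w) - \tilde\gamma|w|^{-d-2s}$ satisfying $|G_0(w)|\le C|w|^{-d-2s}$ for $|w|\le1$ and $|G_0(w)|\le C|w|^{-d-4s}$ for $|w|\ge1$, the constant $\tilde\gamma$ being chosen so that $\eps^{1-2s}c_0\int_{w\cdot n<0}(\psi(y+\eps w)-\psi(y))\tilde\gamma|w|^{-d-2s}|w\cdot n|\d w$ is precisely $\gamma_0\Dlim[\psi](y)$ after the substitution $w\mapsto w/\eps$ inside the integral defining $\Dlim$ in \eqref{def:Dlim}.

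Second, splitting the integral over $\{|\eps^\alpha w|<1\}$ and $\{|\eps^\alpha w|>1\}$ for a well-chosen $\alpha\in(0,1)$, exactly as in the proof of Proposition \ref{prop:CvLepsdiff}, I estimate the far piece using the $|w|^{-d-4s}$ decay of $G_0$ (giving a factor $\eps^{4s\alpha}$ after integration, times $\lVert\psi\lVert$ via Cauchy–Schwarz and the substitution $y\mapsto y+\eps w$), and the near piece by a first-order Taylor expansion $\psi(y+\eps w)-\psi(y) = \eps w\cdot\na\psi(y) + O(\eps^2|w|^2\lVert D^2\psi\lVert)$. The first-order term $\eps w\cdot\na\psi(y)$ paired against $G_0(w)|w\cdot n|$ contributes the difference between $\eps^{-1}\Deps[\psi]$ and $\gamma_0\Dlim[\psi]$ restricted to the truncated domain; since $\gamma_0\Dlim[\psi]\cdot n = 0$ on $\dO$ by assumption, the normal component of this term reduces to a tail correction of size controlled by $\int_{|\eps^\alpha w|>1}|w||G_0(w)|\d w + \int_{|\eps^\alpha w|<1}|w|\cdot\tilde\gamma|w|^{-d-2s}\d w$ near the boundary. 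The second-order Taylor term, once multiplied by $\eps^{-1}$ (from the $\eps^{1-2s}$ prefactor and the $\eps^{2s}$ absorbed into the kernel rescaling), is bounded by $\eps^{1-\alpha(\ldots)}\lVert D^2\psi\lVert_{L^2}^2$ times $\big(\int_{|\eps^\alpha w|<1}|w|^2 F_0(w)\d w\big)^2$, which vanishes since $\int_B|w|^2|w|^{-d-2s}\d w<\infty$ for $s<1$.

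Third, I integrate the square of this difference over $\dO$: writing $\eps^{-1}\Deps[\psi]\cdot n = (\eps^{-1}\Deps[\psi]\cdot n - \gamma_0\Dlim[\psi]\cdot n) + \gamma_0\Dlim[\psi]\cdot n$ and using that the second summand is identically zero, we get $\eps^{-1}\int_{\dO}|\Deps[\psi]\cdot n|^2\d\sigma \le \eps\int_{\dO}|\eps^{-1}\Deps[\psi]\cdot n - \gamma_0\Dlim[\psi]\cdot n|^2\d\sigma$, and the right-hand side is exactly what the near/far splitting controls, with an extra $\eps$ that makes the bound go to zero. One must be slightly careful that the trace estimates make sense: the hypothesis $\L[\psi]\in L^2(\Omega)$ together with \cite[Proposition 3.3]{CesbronMelletPuel} gives $\int_\Omega|x_d^{1-2s}\pa_n\psi|^2\d x<\infty$, and as in Proposition \ref{prop:CvLepsdiff} this weighted control is what lets one bound the boundary-layer contribution of the first-order Taylor term near $\dO$.

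\medskip
\noindent\textbf{Main obstacle.} The delicate point is the first-order Taylor term near the boundary. Unlike in Proposition \ref{prop:CvLepsdiff}, where the symmetry of $G$ under $v\mapsto(-v',v_d)$ killed the tangential derivatives and the normal-derivative remainder was controlled by the weighted estimate $\int_\Omega|x_d^{1-2s}\pa_n\psi|^2<\infty$, here the integral is restricted to the half $\{w\cdot n<0\}$, so there is no such cancellation and the full gradient $\na\psi$ survives at leading order. The saving grace is that this leading-order contribution is precisely $\gamma_0\Dlim[\psi]$, whose normal component is zero by hypothesis — so the entire argument hinges on correctly matching the constant $\tilde\gamma$ (equivalently $\gamma_0 = \gamma\nu_0^{1-2s}\Gamma(2s)$) so that $\eps^{-1}\Deps[\psi]\to\gamma_0\Dlim[\psi]$ and then exploiting the cancellation of the normal component, rather than trying to bound $\na\psi$ directly. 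Verifying this matching, and checking that the boundary-layer remainder (the region $x_d\lesssim\eps^{1-\alpha}$) is handled by the weighted Hardy-type bound from \cite{CesbronMelletPuel}, is where the real work lies.
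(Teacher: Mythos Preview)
Your overall architecture matches the paper's: subtract the identically-zero quantity $\Dlim[\psi]\cdot n$, replace $F_0$ by the remainder $G_0(w)=F_0(w)-\gamma_0|w|^{-d-2s}$ with the decay bounds \eqref{eq:G0}, and split into a near piece and a far piece. The far-piece estimate and the idea of Taylor-expanding on the near piece are also the same as in the paper. However, your identification of the ``main obstacle'' is based on a false premise, and as a result you miss the actual difficulty.

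\medskip
\textbf{The symmetry you think is lost is in fact available.} On the half-space $\Omega=\RR^d_+$ the outward normal is $n=-e_d$, so $\{w\cdot n<0\}=\{w_d>0\}$. The reflection $w\mapsto\tilde w=(-w',w_d)$ preserves this half-space, and $G_0(w)|w\cdot n|=G_0(|w|)\,|w_d|$ is even in $w'$. Consequently the tangential part $\eps w'\cdot\na'\psi(y)$ of the first-order Taylor term integrates to zero against $G_0(w)|w\cdot n|$, exactly as in the interior estimate. Only the normal derivative $\pa_n\psi(y)$ survives at first order. Your proposal to ``rely on the cancellation via $\Dlim[\psi]\cdot n=0$'' a second time to kill the full gradient is circular: that cancellation has already been spent in passing from $F_0$ to $G_0$, and the surviving first-order term $\int \eps(w\cdot n)\pa_n\psi(y)\,G_0(w)|w\cdot n|\d w$ is no longer related to $\Dlim[\psi]$.

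\medskip
\textbf{The real obstacle is the range $s>3/4$.} After the symmetrization, the near piece yields a bound of the form
\[
I_\eps^- \;\lesssim\; \eps^{3-4s}\,\lVert \pa_n\psi\rVert_{L^2(\dO)}^2 \;+\; \eps^{\min(1,4-4s)}\,\lVert D^2\psi\rVert_{L^2(\Omega)}^2,
\]
and $\eps^{3-4s}\to\infty$ when $s>3/4$. The paper closes this gap not with the weighted Hardy-type bound $\int_\Omega|x_d^{1-2s}\pa_n\psi|^2<\infty$ you invoke (that bound is used in Proposition~\ref{prop:CvLepsdiff}, where the integral is over $\Omega$, not $\dO$), but with the stronger consequence of \cite[Proposition~3.3]{CesbronMelletPuel}: for $s>3/4$ the hypothesis $\L[\psi]\in L^2(\Omega)$ forces $\pa_n\psi=0$ on $\dO$, so the dangerous term vanishes identically. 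Your proposal never isolates this dichotomy, and the vague power-counting in your second and third paragraphs would not detect it.
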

\begin{proof}
	We recall that $\Dlim$ is defined in \eqref{def:Dlim}. We introduce the notation $G_0 (w) =  | F_0(w) - \frac{\gamma_0}{|w|^{d+2s}}|$ which satisfies: 
	\begin{align} \label{eq:G0} 
		\forall w\in\RR^d :\quad  G_0(w) \lesssim \frac{1}{|w|^{d+2s}}  \quad \mbox{ and } \quad \forall |w|>1 :\quad G_0(w) \lesssim \frac{1}{|w|^{d+4s}} .
	\end{align}
	Note that these estimates can be proved via is a simpler version of the proof of Lemma \ref{lem:F1} so we will not write the proof explicitly. Since $\Dlim [\psi](y)\cdot n = 0$ on $\dO$ we can write 
	\begin{align*}
		&\eps^{-1} \int_{\dO} \Big| \Deps [\psi](y)\cdot n(y) \Big|^2 \d \sigma(y) \\
		& = \eps^{-1} \int_{\dO} \bigg( \eps^{1-2s} c_0 \int_{w\cdot n<0} \Big(\psi(y+\eps w)-\psi(y) \Big) \Big( F_0(w) - \frac{\gamma_1}{|w|^{d+2s}} \Big) |w\cdot n| \d w  \bigg)^2 \d \sigma(y)\\
		&\leq 2\eps^{-1} \int_{\dO} \bigg( \eps^{1-2s} c_0 \int_{w\cdot n<0,\, |\eps w|<1} \Big(\psi(y+\eps w)-\psi(y) \Big) G_0(w) |w\cdot n| \d w  \bigg)^2 \d \sigma(y) \\
		&\quad + 2\eps^{-1} \int_{\dO} \bigg( \eps^{1-2s} c_0 \int_{w\cdot n<0,\, |\eps w|>1} \Big(\psi(y+\eps w)-\psi(y) \Big) G_0(w) |w\cdot n| \d w  \bigg)^2 \d \sigma(y)\\
		&:= I_\eps^- + I_\eps^+
	\end{align*}
	For $I_\eps^+$, we use the decay of $G_0$ with the substitution $w\to z= \eps w $ to write
	\begin{align*}
		I_\eps^+ &\lesssim \eps^{-1} \int_{\dO}\bigg( \int_{z\cdot n<0,\, |z|>1}  \Big(\psi(y+z)-\psi(y) \Big) \eps^{-d-2s} G_0\left(\frac{z}{\eps}\right) |z\cdot n| \d z \bigg)^2 \d \sigma (y)\\
		&\lesssim \eps^{-1}\int_{\dO}\bigg( \int_{z\cdot n<0,\, |z|>1}  \Big(\psi(y+z)-\psi(y) \Big) \eps^{2s} \frac{|z\cdot n|}{|z|^{d+4s}}  \d z \bigg)^2 \d \sigma (y)\\
		&\lesssim \eps^{4s-1} \int_{\dO} \bigg( \int_{z\cdot n<0,\, |z|>1}  \Big(\psi(y+z)-\psi(y) \Big)^2 \frac{|z\cdot n|}{|z|^{d+4s}}  \d z \bigg) \bigg(\int_{|z|>1}\frac{|z\cdot n|}{|z|^{d+4s}} \d z \bigg) \d \sigma (y)\\
		&\lesssim \eps^{4s-1}\int_{\dO} \int_{z\cdot n<0,\, |z|>1} \int_0^1 \big| z \cdot\na \psi(y+\tau z) \big|^2 \d \tau \frac{|z\cdot n|}{|z|^{d+4s}}  \d z  \d \sigma (y)
	\end{align*}
	With the substitution $\mathcal{P}^{-1}_1: (y,\tau,z) \in \dO\times[0,1]\times\RR^d \to (y+\tau z,z)\in\Omega\times\RR^d$, this yields 
	\begin{align*}
		I_\eps^+ &\lesssim \eps^{4s-1} \int_\Omega \int_{z\cdot n<0,\, |z|>1}\big| \na \psi(x)\big|^2 \frac{\gamma_1 |z|^2 }{|z|^{d+4s}} \d z  \d x \\
		&\lesssim \eps^{4s-1} \lVert \na \psi \lVert_{L^2(\Omega)} 
	\end{align*} 
	For $I_\eps^-$ we want to use the regularity of $\psi$ through a second order Taylor expansion. However, since we integrate on $\Gamma_-$, we need to isolate the normal derivative so we write $w=(w',w_d)\in\RR^{d-1}\times\RR_+$. Furthermore, with the notation $\widetilde{w}=(-w',w_d)$ we have $G_0(\widetilde{w})|\widetilde{w}\cdot n| = G_0(w)|w\cdot n|$ and the domain of integration is invariant by the substitution $w\to \widetilde{w}$ hence 
	\begin{align*}
		&\int_{w\cdot n<0,\, |\eps w|<1} \Big(\psi(y+\eps w)-\psi(y) \big) G_0(w)|w\cdot n|\d w \\
		&= \frac{1}{2} \int_{w\cdot n<0, \, |\eps w|<1} \Big( \psi(y+\eps w) + \psi(y+\eps \widetilde{w}) - 2\psi(y) \Big) G_0(w) |w\cdot n| \d w \\
		&= \frac{1}{2} \int_{w\cdot n<0, \, |\eps w|<1} \bigg( \eps w\cdot \na \psi(y) +\int_0^1 (1-\tau) D^2\psi(y+\eps \tau w)(\eps w, \eps w) \d \tau  \\
		&\qquad + \eps \widetilde{w}\cdot \na \psi(y) +\int_0^1 (1-\tau) D^2\psi(y+\eps \tau \widetilde{w})(\eps \widetilde{w}, \eps \widetilde{w}) \d \tau \bigg) G_0(w) |w\cdot n| \d w \\
		& = \int_{w\cdot n<0, \, |\eps w|<1} \bigg( \eps (w\cdot n)\pa_n \psi(y) + \int_0^1  D^2\psi(y+\eps \tau w)(\eps w, \eps w) \d \tau \bigg) G_0(w) |w\cdot n| \d w .   
	\end{align*}
	Using the Cauchy-Schwarz inequality, this yields
	\begin{align*}
		&I_\eps^- \leq 2\eps^{3-4s} \int_{\dO} \int_{w\cdot n<0, \, |\eps w|<1} \big| \pa_n \psi(y) \big|^2 |w\cdot n|^2 G_0(w) \d w \d \sigma(y) \int_{|\eps w|<1} |w|^2 G_0(w) \d w  \\
		&+ 2\eps^{5-4s} \int_{\dO}  \int_{w\cdot n<0, \, |\eps w|<1}  \int_0^1  \big|D^2\psi(y+\eps \tau w)\big|^2 |w|^2 G_0(w) |w\cdot n|  \d \tau \d w \int_{|\eps w|<1} |w|^3 G_0(w) \d w .   
	\end{align*}
	where, using the substitution $\mathcal{P}_\eps^{-1}: (y,\tau,z) \in \dO\times[0,1]\times\RR^d \to (y+\eps \tau z,z)\in\Omega\times\RR^d$, we have 
	\begin{align*}
		&\int_{\dO}  \int_{w\cdot n<0, \, |\eps w|<1}  \int_0^1  \big|D^2\psi(y+\eps \tau w)\big|^2 |w|^2 G_0(w) |w\cdot n|  \d \tau \d w \\
		&\quad  \leq \eps^{-1} \int_\Omega \int_{w\cdot n<0, \, |\eps w|<1} \big|D^2 \psi(x)\big|^2 |w|^2 G_0(w) \d w \d x \\
		&\quad \leq \eps^{-1} \lVert D^2 \psi \lVert_{L^2(\Omega)} \int_{|\eps w|<1} |w|^2 G_0(w) \d w. 
	\end{align*}
	Moreover, for $\alpha>2s$, the partial $\alpha$-moment $M_\alpha^\eps$ of $G_0$ can be bounded as
	\begin{align*}
		M_\alpha^\eps(G_0) &:= \int_{|\eps w|<1} |w|^\alpha G_0(w) \d w \\
		&\lesssim \int_{|w|<1} \frac{|w|^\alpha}{|w|^{d+2s}} \d w + \int_{1<|w|<\eps^{-1}} \frac{|w|^\alpha}{|w|^{d+4s}}\d w \lesssim 1 + \eps^{4s-\alpha} 
	\end{align*}
	hence for $\alpha=3$ and $1/2<s<3/4$, $M_3^\eps(G_0) = C \eps^{4s-3}$, and otherwise $M_\alpha^\eps(G_0)\leq C <+\infty$. This yields  
	\begin{align*}
		I_\eps^- &\lesssim\eps^{3-4s} \lVert \pa_n \psi \lVert_{L^2(\dO)} + \eps^{4-4s}M_3^\eps(G_0) \lVert D^2 \psi \lVert_{L^2(\Omega)} .
	\end{align*}
	Finally, if $1/2<s<3/4$ we get 
	\begin{align*}
		I_\eps^- \lesssim \eps^{3-4s} \lVert \pa_n \psi \lVert_{L^2(\dO)} +  \eps \lVert D^2\psi \lVert_{L^2(\Omega)} 
	\end{align*}
	and for $s>3/4$ we have 
	\begin{align*}
		I_\eps^- \lesssim \eps^{3-4s}  \lVert \pa_n \psi \lVert_{L^2(\dO)} +  \eps^{4-4s} \lVert D^2\psi \lVert_{L^2(\Omega)} .
	\end{align*}
	For $1/2<s<3/4$, this concludes the proof since $3-4s>0$, $\lVert \pa_n \psi\lVert_{L^2(\dO)} <C$ and $\lVert D^2 \psi\lVert <C$ because we assume $\psi \in H^2(\Omega)$. Furthermore, for $s>3/4$ we see that $3-4s<0$ but in that case we proved in \cite[Proposition 3.3]{CesbronMelletPuel} that $\pa_n \psi(x) = 0$ on $\dO$ when $s>3/4$ so that $\lVert \pa_n \psi \lVert_{L^2(\dO)} = 0$ and the convergence follows. 
\end{proof}

Finally, we conclude this section with the proof of Theorem \ref{thm:Diff}:
\begin{proof}[Proof of Theorem \ref{thm:Diff}]
	The only thing left to prove is that 
	\[ \int_0^T \int_{\dO} A_\eps^{-1}[-\nu_0\rho_\eps] (y,\cdot) \Deps [\psi](y)\cdot n(y) \d \sigma(y) \d t \longrightarrow 0 .\]
	This convergence follows from Proposition \ref{prop:CvDepsDiffHS} and the fact that 
	\begin{align*}
		&\int_{\dO} \Big| A_\eps^{-1} [\nu_0\rho_\eps] (y,\cdot) \Big|^2 \d \sigma(y) \\
		&\quad= \int_{\dO} \bigg( c_0 \int_{w\cdot n<0} \int_0^{+\infty} \nu_0 e^{-\nu_0 \tau} \rho_\eps(y+\eps \tau w) F(w) |w\cdot n|\d w \d \tau \bigg)^2 \d \sigma(y) \\
		&\quad\lesssim\int_{\dO} \int_{w\cdot n<0} \int_0^{+\infty} \nu_0^2 e^{-2\nu_0 \tau} \big| \rho_\eps(y+\eps \tau w)\big|^2 F(w)|w\cdot n| \d \tau \d w \d \sigma(y) \\
		&\quad\lesssim \frac{1}{\eps} \int_\Omega \int_{\RR^d} |\rho_\eps(x)|^2 \nu_0 ^2e^{-2\nu_0 \tfe} F(w) \d w \d x \\
		&\quad\lesssim \frac{1}{\eps} \lVert \rho_\eps \lVert_{L^2(\Omega)} .
	\end{align*}
	
\end{proof}

\section{The Maxwell boundary condition}

We consider the rescaled Linear Boltzmann equation \eqref{eq:LinBoltzrescaled} with the Maxwell boundary condition \eqref{def:MaxwellBC} for some $\alpha\in(0,1)$ in the half-space $\Omega=\RR^d_+$ with equilibrium $F$ satisfying \eqref{eq:F0} and $s>1/2$ in order for the constant $c_0$ in the diffusive boundary condition to be well defined.\\
Given a test function $\phi\in\mathcal{D}(\bO\times\RR^d)$ such that $\gamma_+ \phi = \mathcal{B}^*_\alpha[\gamma_- \phi]$ on $\Gamma_+$, where $\mathcal{B}^*_\alpha$ is given by \eqref{def:Bdiffstar}, the weak solution $f_\eps$ of \eqref{eq:LinBoltzrescaled}-\eqref{def:MaxwellBC} satisfies, with $Q = (0,+\infty)\times\Omega\times\RR^d $:
\begin{equation} \label{eq:wfLRMaxHS}
	\begin{aligned}
		&\iiint_{Q} f_\eps \pa_t \phi \d t \d x \d v + \iint_{\Omega\times\RR^d} \fin(x,v) \phi (0,x,v) \d x \d v \\
		&= - \eps^{-2s} \iiint_{Q} \Big[ f_\eps \Big( \eps v\cdot \na_x \phi - \nu_0 \phi \Big) + \nu_0 \rho_\eps F(v) \phi \Big] \d t \d x \d v .
	\end{aligned}
\end{equation}
We introduce the operator $A_\eps$ defined as 
\begin{align} \label{eq:AepsMaxHS} 
	A_\eps = \eps v\cdot \na_x - \nu_0 \text{Id}
\end{align}
on the domain 
\begin{align} \label{eq:DAepsMax}
	\mathcal{D}(A_\eps) = \lbrace \phi \in L^2_{F}(\Omega\times\RR^d): \, v\cdot \na_x \phi \in  L^2_{F}(\Omega\times\RR^d) \mbox{ and } \gamma_+ \phi = \mathcal{B}^*_\alpha [\gamma_- \phi] \mbox{ on } \Gamma_+ \rbrace .
\end{align}

\begin{prop}\label{prop:APDiffConv}
	Given $\psi\in\mathcal{D}([0,T)\times\bO)$ the function $\phi_\eps:= A_\eps^{-1}[-\nu_0 \psi]$ can be expressed as 
	\begin{equation} \label{def:phiepsDiffConv}
		\begin{aligned}
			\phi_\eps (t,x,v) &= \int_0^{\tfe} \nu_0 e^{-\nu_0\tau} \psi(x+\eps\tau v) \d \tau \\
			&\quad + (1-\alpha) e^{-\nu_0 \tfe} A_\eps^{-1}[-\nu_0 \psi] (x_f, \mathcal{R}_{x_f} v) \\
			&\quad + \alpha e^{-\nu_0 \tfe} c_0 \int_{w\cdot n(x_f)<0} A_\eps^{-1}[-\nu_0\psi](x_f,w) F(w) |w\cdot n(x_f)|  \d w 
		\end{aligned}
	\end{equation}
	with $\tfe = \tfe(x,v)$ and $x_f=x_f(x,v)$ and where, for any $(y,w)\in\Gamma_-$:
	\[ A_\eps^{-1}[-\nu_0\psi](y,w) = \int_0^{+\infty} \nu_0 e^{-\nu_0 \tau} \psi(y + \eps \tau w) \d \tau.\]
\end{prop}
\begin{proof}
	The proof of this Proposition is a direct corollary of the proofs of Proposition \ref{prop:solAP} and Proposition \ref{prop:APDiffHS}.
\end{proof}
We then write the operators in the weak formulation as 
\begin{align*}
	&\int_{\RR^d} \nu_0 ( \phi_\eps - \psi ) F(v) \d v \\
	&= \int_{\RR^d} \nu_0 \bigg( \int_0^{\tfe} \nu_0 e^{-\nu_0 \tau} \Big( \psi(x+\eps \tau v) -\psi(x)\Big) \d \tau +  \alpha  e^{-\nu_0 \tfe} \Big( \psi(x_f) -\psi(x) \Big)  \bigg)F(v) \d v    \\
	&\quad + (1-\alpha) \int_{\RR^d} \nu_0  e^{-\nu_0 \tfe} \Big(A_\eps^{-1}[\nu_0 \psi] (x_f,\mathcal{R}_{x_f}v) - \psi(x) \Big)  F(v) \d v   \\
	&\quad + \alpha \int_{\RR^d} \nu_0 e^{-\nu_0\tfe} c_0 \int_{w\cdot n(x_f)<0} \Big( A_\eps^{-1}[\nu_0\psi] (x_f,w) - \psi(x_f) \Big) |w\cdot n(x_f)| F(w) \d w F(v)  \d v 
\end{align*}
Let us regroup these terms in the following way. First of all, we define as usual $\L^\eps_{\SR} $ as 
\begin{align*}
	(1-\alpha) \L^\eps_{\SR} [\psi](x) &:= \eps^{-2s} (1-\alpha) \int_{\RR^d} \bigg( \int_0^{\tfe} \nu_0^2 e^{-\nu_0 \tau} \Big( \psi(x+\eps \tau v) -\psi(x)\Big) \d \tau \\
	&\quad +  \nu_0 e^{-\nu_0 \tfe} \Big(A_\eps^{-1}[\nu_0 \psi] (x_f,\mathcal{R}_{x_f}v) - \psi(x) \Big)  \bigg)F(v)  \d v  \\
	&= \eps^{-2s} (1-\alpha) \int_{\RR^d} \int_0^{+\infty} \nu_0^2 e^{-\nu_0 \tau} \Big(\psi\big(\eta(x,\eps \tau v)\big) - \psi(x) \Big) F(v) \d \tau \d v \\
	&= \eps^{2s} (1-\alpha) \int_{\RR^d}  \Big(\psi\big(\eta(x,\eps v)\big) - \psi(x) \Big) F_1(v) \d v
\end{align*}
with $F_1$ defined in \eqref{eq:F1}. Next, we define $\keps$ as 
\begin{align*}
	\alpha \keps[\psi](x) = \eps^{-2s}\alpha \int_{\RR^d}  \nu_0 e^{-\nu_0 \tfe} \Big( \psi(x)- \psi(x_f)  \Big) F(v) \d v 
\end{align*}
and $\L^\eps$ as 
\begin{align*}
	\alpha \L^\eps[\psi](x) &= \eps^{-2s} \alpha \int_{\RR^d} \int_0^{\tfe} \nu_0^2 e^{-\nu_0 \tau} \Big( \psi(x+\eps \tau v) -\psi(x)\Big) F(v) \d \tau \d v \\
	&= \eps^{-2s} \alpha \int_{x+\eps v \in \Omega} \Big( \psi(x+\eps v) - \psi(x) \Big) F_1(v) \d v .
\end{align*}
Recognising the boundary term of the diffusive case, the weak formulation of \eqref{eq:LinBoltzrescaled}-\eqref{def:MaxwellBC} then reads
\begin{equation} \label{eq:wfLRMaxwellHS}
	\begin{aligned}
		&\iiint_{Q} f_\eps \pa_t \phi_\eps\d t \d x \d v + \iint_{\Omega\times\RR^d} f_{in} \phi_\eps(0,x,v) \d x \d v \\
		&=-  \iint_{(0,T)\times\Omega}\rho_\eps \Big( (1-\alpha) \L^\eps_{\SR} [\psi] (x) + \alpha( \L^\eps[\psi](x) - \kappa_\eps[\psi(x)] )\Big)  \d t \d x \\
		&\quad -\alpha \int_{(0,T)\times\dO} A_\eps^{-1}[\nu_0\rho_\eps](y,\cdot) \Deps [\psi](y) \cdot n(y) \d \sigma(y) \d t .
	\end{aligned}
\end{equation}
The macroscopic limit of this weak formulation then follows from the convergence of test functions proved in a general setting in Appendix \ref{subsection:CVPhieps} and the convergences of the operators proved in the previous sections. More precisely, Proposition \eqref{prop:CVop} proves that $\L_{\SR}^\eps[\psi]$ converges strongly in $L^2((0,+\infty)\times\Omega)$ to $\L_{\SR}[\psi]$ defined in \eqref{def:LSR}, Proposition \ref{prop:CvLepsdiff} proves that $\L^\eps [\psi]- \kappa_\eps[\psi]$ converges in the same sense to $\L_{\D}[\psi]$ defined in \eqref{def:LDlim} and Proposition \ref{prop:CvDepsDiffHS} establishes the convergence of the boundary term. Altogether, we get in the limit that $\rho$ satisfies 
\begin{equation} \label{eq:weakLimpbMaxwell} 
	\iint_{(0,T)\times\Omega} \rho\Big(  \pa_t \psi + (1-\alpha) \L_{\SR}[\psi] + \alpha \L_{\D}[\psi] \Big) \d t \d x 
	- \int_\Omega \rho_{in} \psi(0,x) \d x = 0 
\end{equation}
for all $\psi\in W^{1,\infty}(0,+\infty ; H^2(\Omega))$ such that $(1-\alpha)\L_{\SR} [\psi] + \alpha\L_{\D} [\psi] \in L^2(\RR_+\times\Omega)$ and 
$$\Dlim[\psi](t,x)\cdot n(x) = 0 \quad \mbox{ for all } x\in\dO.$$

\subsection{Existence and uniqueness of weak solutions}

Throughout this section, let us write $\L_{\M} = (1-\alpha)\L_{\SR}  + \alpha\L_{\D} $ for some fixed $\alpha \in (0,1)$. 
We begin by the following well-posedness result for the stationary counterpart of \eqref{eq:LimpbMaxwell}
\begin{thm} \label{thm:stationary}
	For all $g\in L^2(\Omega)$ there exists a unique weak solution $\psi \in \mathcal{D}(\L_{\M})$ to
	\begin{equation} \label{eq:Maxwellstationary}
		\left\{ \begin{aligned}
			&\psi -\L_{\M}[\psi] = g \quad &\mbox{ in }\Omega\\
			& \Dlim [\psi]\cdot n = 0 &\mbox{ on } \dO
		\end{aligned} \right. 
	\end{equation}
	where $\mathcal{D}(\L_{\M})$ is defined in \eqref{def:DLM}.
\end{thm}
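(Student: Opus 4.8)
The plan is to solve the stationary problem \eqref{eq:Maxwellstationary} by the Lax--Milgram theorem, applied to the energy bilinear form attached to $\L_{\M}$. For $\psi,\phi$ regular enough and satisfying the Neumann condition $\Dlim[\psi]\cdot n=\Dlim[\phi]\cdot n=0$ on $\dO$, the integration-by-parts formulae recalled in the remarks following Theorem~\ref{thm:MaxwellSWP} make $\L_{\SR}$ and $\L_{\D}$ symmetric, so that
\[
a(\psi,\phi) := \int_\Omega \psi\phi\,\d x - (1-\alpha)\int_\Omega \phi\,\L_{\SR}[\psi]\,\d x - \alpha\int_\Omega \phi\,\L_{\D}[\psi]\,\d x
\]
is symmetric; using the kernel representations of $\L_{\SR}$ in \eqref{def:LSR} (together with the change of variables $x\mapsto\eta(x,v)$) and of $\L_{\D}=-\gamma_{d,s}\Delsreg+\kappa$ in \eqref{def:LDlim} (together with the boundary change of variables $\mathcal{P}_\eps$ from \eqref{eq:CdVPeps}), it can be written as
\[
a(\psi,\phi)=\int_\Omega\psi\phi\,\d x+(1-\alpha)\frac{\gamma_1}{2}\iint_{\Omega\times\RR^d}\frac{\big(\psi(x)-\psi(\eta(x,v))\big)\big(\phi(x)-\phi(\eta(x,v))\big)}{|v|^{d+2s}}\,\d v\,\d x+\alpha\, b_{\D}(\psi,\phi),
\]
where $b_{\D}$ is the symmetric nonnegative bilinear form attached to $\L_{\D}$ with the Neumann condition, as in \cite{CesbronMelletPuel}; its regional part is exactly $\frac{\gamma_1}{2}\iint_{\Omega\times\Omega}\frac{(\psi(x)-\psi(y))(\phi(x)-\phi(y))}{|x-y|^{d+2s}}\,\d x\,\d y$, so $b_{\D}(\psi,\psi)$ is comparable to $[\psi]_{H^s(\Omega)}^2$.

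\textbf{Energy space, continuity, coercivity.} I would work in $V=H^s(\Omega)$; for the half-space, the space $\HSRs(\Omega)$ of \eqref{def:HSRs} is comparable to $H^s(\Omega)$ (see \cite{Cesbron18}), and the Neumann condition is a natural boundary condition, hence not imposed on $V$. Continuity of $a$ on $V\times V$ follows from Cauchy--Schwarz applied to each quadratic form together with these norm equivalences. For coercivity, write $Q_{\SR}(\psi)=\frac{\gamma_1}{2}\iint_{\Omega\times\RR^d}\frac{(\psi(x)-\psi(\eta(x,v)))^2}{|v|^{d+2s}}\,\d v\,\d x\ge0$ and $Q_{\D}(\psi)=b_{\D}(\psi,\psi)\ge0$, so that
\[
a(\psi,\psi)=\|\psi\|_{L^2(\Omega)}^2+(1-\alpha)Q_{\SR}(\psi)+\alpha\,Q_{\D}(\psi).
\]
Both $Q_{\SR}$ and $Q_{\D}$ control the Gagliardo seminorm, $Q_\bullet(\psi)\ge c_\bullet\,[\psi]_{H^s(\Omega)}^2$, hence the convex combination satisfies $(1-\alpha)Q_{\SR}(\psi)+\alpha Q_{\D}(\psi)\ge\min(c_{\SR},c_{\D})\,[\psi]_{H^s(\Omega)}^2$ and therefore $a(\psi,\psi)\ge c\,\|\psi\|_{H^s(\Omega)}^2$ with $c=\min(1,c_{\SR},c_{\D})>0$, uniformly in $\alpha\in[0,1]$. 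Lax--Milgram then produces a unique $\psi\in V$ with $a(\psi,\phi)=\int_\Omega g\phi\,\d x$ for every $\phi\in V$.

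\textbf{Recovery of the equation and of $\mathcal{D}(\L_{\M})$; uniqueness.} Testing against $\phi\in C_c^\infty(\Omega)$ gives $\psi-\L_{\M}[\psi]=g$ in $\mathcal{D}'(\Omega)$, whence $\L_{\M}[\psi]=\psi-g\in L^2(\Omega)$; in particular $\psi$ has enough regularity for the trace $\Dlim[\psi]\cdot n$ and for the integration-by-parts formulae to be meaningful. Testing then against a general $\phi\in V$ and integrating by parts backwards, all interior contributions cancel and one is left with $\alpha\int_{\dO}\big(\Dlim[\psi]\cdot n\big)\phi\,\d\sigma=0$ for all $\phi$, so $\alpha\Dlim[\psi]\cdot n=0$ on $\dO$; hence $\psi\in\mathcal{D}(\L_{\M})$ as defined in \eqref{def:DLM} and $\psi$ solves \eqref{eq:Maxwellstationary}. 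Uniqueness is immediate: if $w$ is the difference of two solutions, then $w\in\mathcal{D}(\L_{\M})$ satisfies $w-\L_{\M}[w]=0$ with $\alpha\Dlim[w]\cdot n=0$, so testing with $w$ via the integration-by-parts formulae gives $\|w\|_{L^2(\Omega)}^2+(1-\alpha)Q_{\SR}(w)+\alpha Q_{\D}(w)=0$, forcing $w=0$.

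\textbf{Main obstacle.} The delicate point is the coercivity step, specifically the nonnegativity and the norm equivalence $Q_{\D}(\psi)\simeq[\psi]_{H^s(\Omega)}^2$ for the diffusive energy form, which includes the boundary contribution coming from $\kappa$; this rests on the change of variables $\mathcal{P}_\eps$ from \eqref{eq:CdVPeps} and on the functional-analytic results of \cite{CesbronMelletPuel}. A secondary delicate point, already flagged in the remarks following the main theorems, is to make rigorous the trace $\Dlim[\psi]\cdot n$ for $\psi\in H^s(\Omega)$ with $\L_{\M}[\psi]\in L^2(\Omega)$, so that the integration-by-parts formulae — and hence the recovery of the boundary condition and the uniqueness argument — are justified on all of $\mathcal{D}(\L_{\M})$.
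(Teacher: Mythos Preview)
Your proposal is correct and follows essentially the same route as the paper: decompose the bilinear form as a convex combination $a=(1-\alpha)a_{\SR}+\alpha a_{\D}$, invoke the continuity and $H^s$-coercivity of each piece from \cite{Cesbron18} and \cite{CesbronMelletPuel} respectively, apply Lax--Milgram on $H^s(\Omega)$, and then recover $\L_{\M}[\psi]\in L^2$ and the Neumann condition by testing first with $\phi\in\mathcal{D}(\Omega)$ and then with $\phi\in\mathcal{D}(\bar\Omega)$. The only cosmetic difference is that the paper writes $a_{\SR}$ via the explicit half-space kernel $K_\Omega(x,y)=\gamma_1\big(|x-y|^{-d-2s}+|(x'-y',x_d+y_d)|^{-d-2s}\big)$ and $a_{\D}$ via $\int_\Omega \Dlim[\varphi]\cdot\nabla\psi$, whereas you use the equivalent $\eta$-based and regional-Laplacian quadratic forms.
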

\begin{proof}
	Let us first recall some notations and results from \cite{Cesbron18} and \cite{CesbronMelletPuel}. We define the kernel $K_\Omega$, introduced in \cite[Proposition 5.1]{Cesbron18}, as
	\begin{align*}
		K_\Omega(x,y) := \frac{\gamma_1 }{|x-y|^{d+2s}} + \frac{\gamma_1 }{|(x'-y',x_d+y_d)|^{d+2s}}
	\end{align*}
	with notations $x=(x',x_d)\in \RR^{d-1}\times\RR_+$. With this kernel we can write an integration by parts formula for $\L{\SR}$: for all $\varphi ,\psi \in \mathcal{D}(\L_{\SR})$ 
	\begin{align*}
		\int_\Omega \varphi(x) \L_{\SR} [\psi](x) \d x &= \int_\Omega \psi(x) \L_{\SR} [\varphi](x) \d x \\
		&=- \frac{\gamma_1}{2}\pv \iint_{\Omega\times\Omega} \big(\varphi(y)-\varphi(x)\big)\big(\psi(y)-\psi(x)\big) K_\Omega(x,y) \d x \d y .
	\end{align*} 
	Moreover, recall \cite[Lemma 2.4]{CesbronMelletPuel} which states that $\L_{\D} [\psi] = \na_x\cdot \Dlim [\psi]$.\\
	A classical solution $\varphi$ to \eqref{eq:Maxwellstationary} then satisfy 
	\begin{equation}  \label{eq:weakstationary}
		\begin{aligned}
			&\int_{\Omega} \varphi \psi \d x + \frac{1}{2}(1-\alpha)\gamma_1\iint_{\Omega\times\Omega} \big(\varphi(y)-\varphi(x)\big)\big(\psi(y)-\psi(x)\big) K_\Omega(x,y) \d x \d y  \\
			& + \alpha \int_\Omega \Dlim [ \varphi] \cdot \na \psi \d x = \int_\Omega \varphi g \d x .
		\end{aligned}
	\end{equation}
	for all $\psi \in \mathcal{D}(\bO)$. We thus introduce the following bilinear form
	\begin{align*}
		a(\varphi,\psi) &= \int_\Omega \varphi(x) \psi(x) \d x + \frac{1}{2} (1-\alpha) \gamma_1 \iint_{\Omega\times\Omega} \big(\varphi(y)-\varphi(x)\big)\big(\psi(y)-\psi(x)\big) K_\Omega(x,y) \d x \d y  \\
		&\quad + \alpha \int_\Omega \Dlim [ \varphi] \cdot \na \psi \d x .
	\end{align*}
	We can actually decompose the operator $a$ as
	\begin{align*}
		a(\varphi,\psi) = (1-\alpha) a_{\SR}(\varphi,\psi) + \alpha a_{\D} (\varphi,\psi)
	\end{align*}
	with 
	\begin{equation*} 
		\left\{ \begin{aligned}
			a_{\SR}(\varphi,\psi) &:= \int_\Omega \varphi(x) \psi(x) \d x \\
			&\quad + \frac{\gamma_1}{2} \pv \iint_{\Omega\times\Omega} \big(\varphi(y)-\varphi(x)\big)\big(\psi(y)-\psi(x)\big) K_\Omega(x,y) \d x \d y, \\
			a_{\D} (\varphi,\psi) &:= \int_\Omega \varphi(x) \psi(x) \d x + \int_\Omega \Dlim [ \varphi] \cdot \na \psi \d x 
		\end{aligned} \right.
	\end{equation*}
	These two bilinear operators have been introduced and studied in \cite[Theorem 1.6]{Cesbron18} and \cite[Proposition 4.1]{CesbronMelletPuel} respectively. In particular they are both symmetric, continuous on $H^s(\Omega)\times H^s(\Omega)$ and are bounded above and below by the $H^s$-norm hence we have
	\begin{align*}
		c \lVert \varphi \lVert_{H^s(\Omega)} \leq a(\varphi, \varphi) \leq C \lVert \varphi \lVert_{H^s(\Omega)} \quad \forall \varphi \in H^s(\Omega)
	\end{align*}
	for some positive constants $c$ and $C$ depending only on $\Omega$ and $s$. 
	The Lax-Milgram theorem then gives existence and uniqueness of a weak solution to \eqref{eq:Maxwellstationary} in the sense that for any $g\in L^2(\Omega)$ there exists a unique $\varphi \in H^s(\Omega)$ such that 
	\begin{align*}
		a(\varphi, \psi) = \int_{\Omega} g \psi \d x \quad \forall \psi\in H^s(\Omega).
	\end{align*}
	Moreover, this weak solution satisfies in particular \eqref{eq:weakstationary} for all test function $\psi \in \mathcal{D}(\Omega)$. This means that the equation 
	\begin{align*}
		\varphi - \L_{\M} [\varphi] =  g
	\end{align*}
	holds in $\mathcal{D}'(\Omega)$, and since $\varphi$ and $g$ are in $L^2(\Omega)$ we deduce that $\L_{\M} [\varphi]\in L^2(\Omega)$. In particular, the trace $\Dlim [\varphi]\cdot n$ on $\dO$ is well defined in $H^{-1/2}(\dO)$. Finally, using \eqref{eq:weakstationary} with $\psi \in \mathcal{D}(\bO)$ we see that 
	\begin{align*}
		\Dlim [\varphi] \cdot n = 0 \quad \mbox{ on } \dO
	\end{align*}
	hence $\varphi \in \mathcal{D}(\L_{\M})$. 
\end{proof}
Theorem \eqref{thm:MaxwellSWP} then follows immediately from the Hille-Yoshida theorem.

\appendix

\section{Convergence of test functions}  \label{subsection:CVPhieps}
This appendix is devoted to the proof of convergence of test functions, used in the proofs of Theorems \ref{thm:SR}, \ref{thm:Diff} and \ref{thm:Maxwell} above. 
\begin{lemma} \label{lem:CVphieps}
	Let us consider $\psi\in L^\infty(0,+\infty;H^1(\Omega))$ and the operator $A_\eps = \eps v\cdot \na_x - \nu_0 \text{Id}$ defined on a domain 
	\[ \mathcal{D}(A_\eps) = \lbrace \phi\in L^2_F(\Omega \times\RR^d):\, v\cdot \na_x \phi\in L^2_F(\Omega\times\RR^d) \text{ and } \gamma_+ \phi = \mathcal{B}^*_\alpha[\gamma_-\phi] \text{ on } \Gamma_+ \rbrace  ,\]
	where $\B_\alpha$ is the Maxwell boundary operator with $\alpha\in[0,1]$. The function $\phi_\eps := A_\eps^{-1} [-\nu_0 \psi]$ satisfies
	\begin{align*}
		\phi_\eps \underset{\eps \to 0}{\longrightarrow} \psi \quad \mbox{ in } L^2_F((0,+\infty)\times\Omega\times\RR^d)\mbox{-strong} .
	\end{align*}
\end{lemma}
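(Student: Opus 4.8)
The plan is to reduce, by density and a uniform bound, to the case of a smooth test function, and then to exploit the explicit representations of $\phi_\eps = A_\eps^{-1}[\nu_0\psi]$ provided by Propositions~\ref{prop:solAP}, \ref{prop:APDiffHS} and \ref{prop:APDiffConv}.

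\emph{Step 1 (a uniform contraction and reduction to smooth $\psi$).} For $g\in L^2_F(\Omega\times\RR^d)$ put $\phi_\eps := A_\eps^{-1}[\nu_0 g]$, so that $\nu_0\phi_\eps - \eps v\cdot\na_x\phi_\eps = \nu_0 g$ with $\gamma_+\phi_\eps = \mathcal{B}^*_\alpha[\gamma_-\phi_\eps]$. Multiplying by $\phi_\eps F$ and integrating over $\Omega\times\RR^d$, the transport term produces the boundary contribution $-\tfrac{\eps}{2}\big(\int_{\Sigma_+}(v\cdot n)(\gamma_+\phi_\eps)^2F - \int_{\Sigma_-}|v\cdot n|(\gamma_-\phi_\eps)^2F\big)$, which is nonnegative: the Maxwell operator satisfies $\int_{\Sigma_+}(v\cdot n)\big(\mathcal{B}^*_\alpha[\gamma_-\phi_\eps]\big)^2F \le \int_{\Sigma_-}|v\cdot n|(\gamma_-\phi_\eps)^2F$, the specular part being an isometry for $|v\cdot n|F\,\d v$ (since $F$ is radial) and the diffuse part a contraction by Jensen's inequality together with $c_0\int_{w\cdot n<0}F(w)|w\cdot n|\,\d w = 1$ — these are the trace estimates underlying Lemma~\ref{lem:Apriori}. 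Hence $\nu_0\lVert\phi_\eps\rVert_{L^2_F}^2 \le \nu_0\lVert g\rVert_{L^2_F}\lVert\phi_\eps\rVert_{L^2_F}$, i.e. $\lVert A_\eps^{-1}[\nu_0 g]\rVert_{L^2_F}\le\lVert g\rVert_{L^2_F}$, uniformly in $\eps$. Since $A_\eps^{-1}$ is linear and $\mathcal{D}([0,+\infty)\times\bO)$ is dense in $L^2((0,+\infty)\times\Omega)$ (which contains $\psi$, strong $L^2_F$-convergence being what is claimed), approximating $\psi$ by $\psi_k\in\mathcal{D}([0,+\infty)\times\bO)$ and using the contraction with $g=\psi-\psi_k$ gives, since $\int F = 1$,
\begin{equation*}
\lVert A_\eps^{-1}[\nu_0\psi] - \psi\rVert_{L^2_F} \le 2\lVert\psi-\psi_k\rVert_{L^2((0,+\infty)\times\Omega)} + \lVert A_\eps^{-1}[\nu_0\psi_k] - \psi_k\rVert_{L^2_F}.
\end{equation*}
It therefore suffices to prove the convergence for $\psi\in\mathcal{D}([0,+\infty)\times\bO)$.

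\emph{Step 2 (pointwise estimate of $\phi_\eps - \psi$).} Fix such a $\psi$. After unrolling the formulas — in the half-space each trajectory meets $\dO$ at most once, so the implicit terms $A_\eps^{-1}[\nu_0\psi](x_f,\cdot)$ in Proposition~\ref{prop:APDiffConv} are given by Proposition~\ref{prop:solAP} with $\tfe = +\infty$ — and using $\int_0^{+\infty}\nu_0 e^{-\nu_0\tau}\d\tau = 1$ and $c_0\int_{w\cdot n<0}F(w)|w\cdot n|\,\d w = 1$, the difference $\phi_\eps(t,x,v) - \psi(t,x)$ is an average (against a probability measure) of increments $\psi(t,Y)-\psi(t,x)$ of $\psi$ along the broken transport trajectory issued from $(x,v)$. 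Each increment is $\le \min\!\big(2\lVert\psi\rVert_{L^\infty},\,\mathrm{Lip}(\psi)\,\ell\big)$, where $\ell$, the Euclidean distance from $x$ to $Y$, is at most the relevant arclength: $\eps\tau|v|$ for a free or interior segment, $\eps\tau|v|$ after a specular reflection (as $|\mathcal{R}_{x_f}v|=|v|$) or $\eps\tau|w|$ after a diffuse one, while for the jump to the exit point one uses $|x_f-x| = \tfe\,\eps|v|$ together with $\tfe\,e^{-\nu_0\tfe} \le (\nu_0 e)^{-1}$, whence $e^{-\nu_0\tfe}|x_f-x| \le C\eps|v|$. Integrating the $\tau$-variable against $\nu_0 e^{-\nu_0\tau}\d\tau$ and, when $\alpha>0$, the post-reflection velocity against $c_0 F(w)|w\cdot n|\,\d w$, one reaches a bound of the shape
\begin{equation*}
|\phi_\eps(t,x,v) - \psi(t,x)| \le C\min\!\big(\lVert\psi\rVert_{L^\infty},\eps|v|\big) + C\int_{w\cdot n<0}\min\!\big(\lVert\psi\rVert_{L^\infty},\eps|w|\big)F(w)|w\cdot n|\,\d w,
\end{equation*}
the second term occurring only for $\alpha>0$ and being independent of $v$.

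\emph{Step 3 (integration and conclusion).} It remains to square, integrate against $F(v)\,\d v$ uniformly in $(t,x)\in\mathrm{supp}\,\psi$, and integrate over $(t,x)$; this is the only genuine computation. Splitting each velocity integral at $|v|\sim\eps^{-1}$ and using the moment bounds that follow from \eqref{eq:F0},
\[ \int_{|v|<R}|v|^2F(v)\,\d v \le CR^{2-2s},\qquad \int_{|v|>R}F(v)\,\d v \le CR^{-2s},\qquad \int_{|w|>R}F(w)|w\cdot n|\,\d w \le CR^{1-2s}, \]
one finds $\int_{\RR^d}|\phi_\eps(t,x,v)-\psi(t,x)|^2F(v)\,\d v \le C(\psi)\,\eps^{\theta}$ with $\theta = 2s$ if $\alpha=0$ (any $s\in(0,1)$) and $\theta = 4s-2>0$ if $\alpha>0$ (which forces $s>1/2$, exactly what makes $\int F(w)|w\cdot n|\,\d w$ finite, as in the diffusive and Maxwell cases). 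Integrating over the compact set $\mathrm{supp}\,\psi$ yields $\lVert\phi_\eps-\psi\rVert_{L^2_F((0,+\infty)\times\Omega\times\RR^d)}^2\le C(\psi)\,\eps^\theta\to 0$, and together with Step~1 this proves the lemma. The hard part is precisely this last step: the heavy tail of $F$ means that for $|v|\gtrsim\eps^{-1}$ the trajectory moves a distance of order one and only the crude bound $2\lVert\psi\rVert_{L^\infty}$ on the increment is available; and for $\alpha>0$ the additional integration over the post-reflection velocity, distributed according to $F(w)|w\cdot n|\,\d w$, is what requires $s>1/2$, whereas the specular case goes through for every $s\in(0,1)$.
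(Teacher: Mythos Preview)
Your proof is correct and takes a genuinely different route from the paper's. The paper works directly under the hypothesis $\psi\in L^\infty(0,+\infty;H^1(\Omega))$: it splits the velocity integral at $|\eps v|=1$, and for each piece of the explicit decomposition of $\phi_\eps-\psi$ it performs a first-order Taylor expansion in $x$ and closes the estimate in $L^2$ using $\|\nabla\psi\|_{L^2}$ together with a change of variables $(y,\tau,w)\leftrightarrow (x,v)$ (the map $\mathcal P_\eps$) to turn boundary integrals back into volume integrals. Your approach instead first proves the uniform contraction $\|A_\eps^{-1}[\nu_0 g]\|_{L^2_F}\le \|g\|_{L^2_F}$ via the natural energy identity and the dissipativity of the adjoint Maxwell operator, then reduces by density to $\psi\in\mathcal D([0,+\infty)\times\bO)$ and exploits the pointwise bound $|\phi_\eps-\psi|\le C\min(\|\psi\|_\infty,\eps|v|)+C\int_{w\cdot n<0}\min(\|\psi\|_\infty,\eps|w|)F(w)|w\cdot n|\,\d w$, which is uniform in $(t,x)$ and therefore needs no change of variables in $x$ at all. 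What you gain is a cleaner and more robust endgame: once the pointwise bound is in hand, the only computation is the tail splitting of $F$ that yields the rates $\eps^{2s}$ (specular part) and $\eps^{4s-2}$ (diffusive part, requiring $s>1/2$), and the argument is insensitive to the Jacobian structure of the specular flow. What the paper's approach buys is that it avoids the preliminary contraction/density step and works directly at $H^1$ regularity.

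One minor caveat: your density step needs $\psi\in L^2((0,+\infty)\times\Omega)$, which $L^\infty(0,+\infty;H^1(\Omega))$ alone does not provide on the half-space; but the paper's own proof also tacitly uses $\|\nabla\psi\|_{L^2(\RR_+\times\Omega)}$ and $\|\psi\|_{L^2(\Omega)}$, so this is an imprecision of the stated hypothesis rather than a flaw in your argument, and in the applications $\psi$ is compactly supported.
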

\begin{proof}
	As usual, we split the velocity integral in two parts: $|\eps v| >1$ and $|\eps v|<1$.  We begin with the former and using \eqref{def:phiepsDiffConv} we decompose $\phi_\eps - \psi$ as follows (omitting the $t$ variable for clarity): 
	\begin{align*}
		\phi_\eps (x,v) - \psi(x) &= \int_0^{\tfe} \nu_0 e^{-\nu_0 \tau} \Big( \psi(x+\eps \tau v) - \psi(x) \Big) \d \tau \\
		&+ (1-\alpha) e^{-\nu_0 \tfe} \left( A_\eps^{-1}[-\nu_0\psi](x_f,\Rxf v) - \psi(x)\Big) \right) \\
		&+ \alpha e^{-\nu_0 \tfe} c_0 \int_{w\cdot n<0} \left( A_\eps^{-1}[-\nu_0\psi](x_f,w) - \psi(x)\right) F(w) |w\cdot n| \d \tau \d w .
	\end{align*}
	Since $\tfe(x,v)>0$ and we have easily 
	\begin{align*}
		&\int_\Omega\int_{|\eps v|>1} \left(\int_0^{\tfe} \nu_0 e^{-\nu_0 \tau} \Big( \psi(x+\eps \tau v) - \psi(x) \Big) \d \tau \right)^2 F(v) n  t\d v \d x \\
		&\quad \lesssim \int_\Omega \int_{|z|>1} \int_0^{+\infty} \nu_0e^{-\nu_0 \tau} \Big( \psi(x+\tau z) - \psi(x) \Big)^2 \frac{\eps^{2s}}{|z|^{d+2s}} \d \tau\d z \d x \\
		&\quad \lesssim \eps^{2s} \| \psi\|_{L^2(\Omega)} .
	\end{align*}
	Moreover, since $\tfe(x,v)=\tau_f(x,\eps v)$, $|\Rxf v|=|v|$ and for any $(y,w)\in\Gamma_-$: 
	\begin{align*}
		A_\eps^{-1}[-\nu_0\psi] (y,w) &= \int_0^{+\infty} \nu_0e^{-\nu_0\tau} \psi(y+\eps \tau w) \d \tau \\
		&= A_1^{-1} [-\nu_0 \psi] (y,\eps w). 
	\end{align*}
	we have with the same substitution as above
	\begin{align*}
		\int_\Omega\int_{|\eps v|>1} \big(A_\eps^{-1}[-\nu_0\psi](x_f,\Rxf v) - \psi(x)\big)^2 F(v)\d v \d x \lesssim \eps^{2s}\| \psi\|_{L^2(\Omega)} . 
	\end{align*}
	Finally, with the substitution $\tau'=\eps \tau$ we have 
	\begin{align*}
		& c_0 \int_\Omega \int_{|\eps v|>1} \left( \int_{w\cdot n<0} \left( A_\eps^{-1}[-\nu_0\psi](x_f,w) - \psi(x)\right) F(w) |w\cdot n| \d w \right)^2 F(v) \d v \d x \\
		& \lesssim \int_\Omega \int_{|\eps v|>1}\int_{w\cdot n<0} \int_0^\infty \nu_0 e^{-\nu_0 \tau} \left( \psi(x_f + \eps\tau w ) - \psi(x)\right)^2 F(w) |w\cdot n| \d \tau \d w F(v) \d v \d x \\
		& \lesssim \int_\Omega \int_{|z|>1} \int_{w\cdot n<0} \int_0^\infty \nu_0 e^{-\nu_0 \tau'/\eps} \left( \psi(x_f + \tau' w ) - \psi(x)\right)^2 F(w) |w\cdot n| \frac{\d \tau}{\eps} \d w \frac{\eps^{2s}}{|z|^{d+2s}} \d z\d x \\
		&\lesssim \eps^{2s-1} \|\psi\|_{L^2(\Omega)} 
	\end{align*}
	and the convergence follows. \\
	For the integral over $|\eps v|<1$ we use a more localised analysis. We know that $\phi_\eps-\psi$ can be expressed as (omitting the $t$ variable again for clarity)
	\begin{equation} \label{eq:phieps-psi}
		\begin{aligned}
			\phi_\eps (x,v)-\psi(x) &= \int_0^{\tfe} \nu_0 e^{-\nu_0\tau} \Big( \psi(x+\eps \tau v)-\psi(x) \Big) \d \tau +  e^{-\nu_0\tfe} \Big( \psi(x_f) -\psi(x) \Big) \\
			&\quad  + e^{-\nu_0 \tfe}\Big( \mathcal{B}^*_\alpha \big( A_\eps^{-1} [-\nu_0 \psi ]\big) (x_f,v) - \psi(x_f) \Big) .
		\end{aligned}
	\end{equation}
	For the first term the computations are rather straightforward
	\begin{align*}
		&\int_{\Omega} \int_{|\eps v|<1} \bigg( \int_0^{\tfe} \nu_0 e^{-\nu_0\tau} \Big( \psi(x+\eps \tau v)-\psi(x) \Big) \d \tau\bigg)^2 F(v) \d v \d x  \\
		&\leq \int_{\Omega} \int_{|\eps v|<1}  \int_0^{\tfe} \nu_0^2 e^{-2 \nu_0 \tau} \bigg(\int_0^1 \eps \tau v \cdot \na_x \psi(x+\lambda\eps \tau v) \d \lambda \bigg)^2  F(v)\d \tau\ \d v \d x \\
		&\leq C \eps^2 \int_{\Omega} |\na_x \psi |^2 \d x  \int_0^{+\infty} \nu_0^2 \tau^2 e^{-\nu_0 \tau} \d \tau \int_{|\eps v|<1} |v|^2 F(v) \d v \\
		&\leq C\eps^{2s}  \lVert \na \psi \lVert_{L^2(\Omega)}.
	\end{align*}
	using \eqref{eq:F0} which yields
	\begin{align*}
		\int_{|\eps v|<1} |v|^2 F(v) \d v &\leq \int_{|v|<1} |v|^2 F(v) \d v + \int_{1<|v|<1/\eps} \frac{\gamma}{|v|^{d+2s-2}} \d v \leq C + C \eps^{2s-2} .
	\end{align*}
	For the second term, we can do the same computation as above and use the fact that $(\tfe)^2e^{-\nu_0 \tfe} \in L^\infty(\Omega\times\RR^d)$ to get
	\begin{align*}
		&\int_{\Omega} \int_{|\eps v|<1}  \Big( e^{-\nu_0\tfe} \Big( \psi(x_f) -\psi(x) \Big)\Big)^2 F(v) \d v \d x \\
		&\leq \int_{\Omega} \int_{|\eps v|<1}   e^{-2 \nu_0 \tfe} \bigg(\int_0^1 \eps \tfe v \cdot \na_x \psi(x+\lambda\eps \tfe v) \d \lambda \bigg)^2  F(v)\d \tau \d v \d x \\
		&\leq C \eps^2 \int_{\Omega} |\na_x \psi |^2 \d x\int_{|\eps v|<1} |v|^2 F(v) \d v \\
		&\leq C\eps^{2s}  \| \na \psi \|_{L^2(\Omega)}.
	\end{align*}
	Next, we split the boundary operator into its specular and its diffusive part. For the specular part we prove convergence similarly to the previous two terms: 
	\begin{align*}
		&\int_{\Omega} \int_{|\eps v|<1} \bigg( e^{-\nu_0\tfe} \int_0^{+\infty} \nu_0 e^{-\nu_0 \tau} \Big(\psi(x_f+\eps \tau \Rxf v) - \psi(x_f) \Big) \d \tau \bigg)^2 F(v) \d v \d x  \\
		& \lesssim \int_{\Omega} \int_{|\eps v|<1} e^{-2\nu_0\tfe} \int_0^{+\infty} \nu_0^2e^{-2\nu_0\tau} \bigg( \int_0^1 \eps \tau \Rxf v \cdot \na \psi(x_f + \lambda \eps \tau \Rxf v) \d \lambda \bigg)^2 F(v) \d v \d x\\
		&\lesssim \eps^{2s} \| \na \psi \|_{L^2(\Omega)} 
	\end{align*}
	using the fact that $|\Rxf v| = |v|$. For the diffusive part, the integral over $|\eps v|<1$ will not play a important role, we focus instead on the integral in $w$ to write
	\begin{align*}
		&\int_\Omega \left( c_0 \int_{w\cdot n<0} \big( A_\eps^{-1}[-\nu_0 \psi] (x_f, w)-\psi(x_f) \big) |w\cdot n|F(w) \d w \right)^2 \d x \\
		&\quad \lesssim \int_\Omega \bigg( \int_{w\cdot n<0, |\eps w|<1} \big( A_\eps^{-1}[-\nu_0 \psi] (x_f, w)-\psi(x_f) \big)^2 |w\cdot n|F(w) \d w \\
		&\qquad + \int_{w\cdot n<0, |\eps w|>1} \big( A_\eps^{-1}[-\nu_0 \psi] (x_f, w)-\psi(x_f) \big)^2 |w\cdot n|F(w) \d w \bigg) \d x \\
	\end{align*}        
	Using the same techniques and controls as above, we get:
	\begin{align*}    
		&\int_\Omega \left( c_0 \int_{w\cdot n<0} \big( A_\eps^{-1}[-\nu_0 \psi] (x_f, w)-\psi(x_f) \big) |w\cdot n|F(w) \d w \right)^2 \d x \\
		&\quad \lesssim \int_\Omega \int_{z\cdot n<0, |z|>1} \big( A_1^{-1}[-\nu_0\psi](x_f,z) -\psi(x_f)\big)^2 \frac{|z\cdot n|}{\eps} \frac{\eps^{2s}}{|z|^{d+2s}} \d z \d x\\
		&\qquad + \int_\Omega \int_{|\eps w|<1} \int_0^{+\infty} \nu_0 e^{-\nu_0 \tau} \left(\int_0^1 \eps \tau w\cdot\na \psi(x_f+\lambda \eps \tau w) \d \lambda \right)^2 |w\cdot n| F(w) \d \tau \d w \d x \\
		&\quad \lesssim \eps^{2s-1} \| \psi\|_{L^2(\Omega)} + \eps^2 \| \na\psi \|_{L^2(\Omega)} \int_{|\eps w|<1} |w|^3 F(w) \d w \\
		&\quad \lesssim \eps^{2s-1} \| \psi\|_{L^2(\Omega)} + \eps^{2s-1} \| \na\psi \|_{L^2(\Omega)} .
	\end{align*}
	This concludes the proof of convergence. 
\end{proof}
As an immediate corollary of Lemma \ref{lem:CVphieps}, we have
\begin{cor}
	For all $\psi \in W^{1,\infty}(0,+\infty; H^1(\Omega))$ and $\phi_\eps$ defined in Lemma \ref{lem:CVphieps} we have
	\begin{align*}
		\pa_t \phi_\eps \underset{\eps \to 0}{\longrightarrow} \pa_t \psi \quad \mbox{ in } L^2_F((0,+\infty)\times\Omega\times\RR^d)\mbox{-strong} .
	\end{align*}
\end{cor}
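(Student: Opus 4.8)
The key observation is that the operator $A_\eps = \eps v\cdot\na_x - \nu_0\,\text{Id}$ does not act on the time variable: in the representation formulas for $\phi_\eps = A_\eps^{-1}[\nu_0\psi]$ obtained in Propositions \ref{prop:solAP}, \ref{prop:APDiffHS} and \ref{prop:APDiffConv}, the variable $t$ appears only as a parameter inside $\psi$. The plan is therefore to show that $\pa_t$ commutes with $A_\eps^{-1}$, namely that
$$\pa_t \phi_\eps = A_\eps^{-1}[\nu_0\,\pa_t\psi],$$
and then to conclude by applying Lemma \ref{lem:CVphieps} with $\psi$ replaced by $\pa_t\psi$.

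First I would establish the commutation identity by differentiating the explicit formula \eqref{def:phiepsDiffConv} (which reduces to \eqref{def:phiepsDiffHS} when $\alpha=1$ and to the formula of Proposition \ref{prop:solAP} when $\alpha=0$) under the integral signs. This is legitimate since $\psi\in W^{1,\infty}(0,+\infty;H^1(\Omega))$ ensures that the difference quotients $h^{-1}\big(\psi(t+h,\cdot)-\psi(t,\cdot)\big)$ are uniformly bounded in $H^1(\Omega)$ and converge in $H^1(\Omega)$ to $\pa_t\psi(t,\cdot)$ for a.e.\ $t$, while the weights $\nu_0 e^{-\nu_0\tau}$ together with the factor $c_0 F(w)|w\cdot n(x_f)|$ appearing in the diffusive contribution --- integrable in $w$ --- provide an integrable dominating envelope. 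The dominated convergence theorem then gives that $t\mapsto\phi_\eps(t,x,v)$ is differentiable with $\pa_t\phi_\eps$ represented by formula \eqref{def:phiepsDiffConv} with $\pa_t\psi$ in place of $\psi$, i.e. $\pa_t\phi_\eps = A_\eps^{-1}[\nu_0\,\pa_t\psi]$. Equivalently, for fixed $(x,v)$ the function $t\mapsto\phi_\eps(t,x,v)$ is a convolution-type integral of $t\mapsto\psi(t,\cdot)$ against an $\eps$-dependent but $t$-independent kernel, hence inherits the time regularity of $\psi$ and is differentiated simply by differentiating $\psi$.

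Finally I would note that $\psi\in W^{1,\infty}(0,+\infty;H^1(\Omega))$ implies $\pa_t\psi\in L^\infty(0,+\infty;H^1(\Omega))$, which is precisely the hypothesis of Lemma \ref{lem:CVphieps}. Applying that lemma to $\pa_t\psi$ yields
$$A_\eps^{-1}[\nu_0\,\pa_t\psi]\underset{\eps\to0}{\longrightarrow}\pa_t\psi\quad\text{in }L^2_F\big((0,+\infty)\times\Omega\times\RR^d\big)\text{-strong},$$
and combining this with the commutation identity gives the claimed convergence of $\pa_t\phi_\eps$. There is essentially no obstacle beyond the routine justification of differentiation under the integral sign; the analytic content of the corollary is entirely contained in Lemma \ref{lem:CVphieps}.
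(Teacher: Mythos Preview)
Your proposal is correct and matches the paper's approach: the paper states this as an ``immediate corollary'' of Lemma~\ref{lem:CVphieps} without further proof, and the natural justification is precisely the one you give, namely that $A_\eps$ does not act on $t$, so $\pa_t\phi_\eps = A_\eps^{-1}[\nu_0\,\pa_t\psi]$ and the lemma applies to $\pa_t\psi\in L^\infty(0,+\infty;H^1(\Omega))$.
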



\bibliographystyle{siam}
\bibliography{biblio.bib}

\end{document}